\documentclass[letter,11pt]{amsart}
\usepackage[toc,page]{appendix}
\usepackage{amssymb} 
\usepackage{bbm}
\usepackage{graphicx}
\usepackage{color}
\usepackage[font=footnotesize]{caption}
\usepackage{mathtools}
\usepackage{pinlabel}
\usepackage[shortlabels]{enumitem}
\usepackage{graphicx}
\graphicspath{ {images/} }
\usepackage{comment}
\usepackage{tikz-cd}
\usepackage{subfigure}
\usepackage{geometry}
\usepackage{mathrsfs}

\usepackage{hyperref}

\usepackage{float}
\usepackage{calc}
\def\thetitle{{ . }}
\hypersetup{
	pdftitle=  \thetitle,
	pdfauthor=  {}  
}

\newtheorem{THM}{Theorem}[section]

\newtheorem*{THM*}{Theorem}
\newtheorem{LEM}[THM]{Lemma}
\newtheorem{COR}[THM]{Corollary}
\newtheorem{prop}[THM]{Proposition}
\newtheorem{que}{Question}
\newtheorem{conj}[que]{Conjecture}
\newtheorem{defn}[THM]{Definition}
\newtheorem{conv}[THM]{Convention}

\newtheorem{nota}[THM]{Notation}
\newtheorem{cons}[THM]{Construction}

\newtheorem{ep}[THM]{Example}
\newtheorem*{main (a)}{Theorem \ref{main} (a)}
\newtheorem*{main (b)}{Theorem \ref{main} (b)}

\theoremstyle{remark}

\newtheorem{rmk}[THM]{\textbf{\emph{Remark}}}

\usepackage{tikz}
\usetikzlibrary{calc,decorations.markings}
\usetikzlibrary{shapes,snakes}

\theoremstyle{definition}
\newtheorem*{defn*}{Definition}

\newcommand\N{\mathbb{N}}
\newcommand\Q{\mathbb{Q}}
\newcommand\R{\mathbb{R}}

\newcommand\Z{\mathbb{Z}}

\newcommand{\F}{\mathcal{F}}

\newcommand{\Fs}{\mathcal{F}^{s}}
\newcommand{\Fu}{\mathcal{F}^{u}}
\newcommand{\Fws}{\mathcal{F}^{ws}}
\newcommand{\Fwu}{\mathcal{F}^{wu}}

\newcommand{\Ews}{\mathcal{E}^{ws}}
\newcommand{\Ewu}{\mathcal{E}^{wu}}
\newcommand{\Ors}{\mathcal{O}^{s}}
\newcommand{\Oru}{\mathcal{O}^{u}}

\newcommand{\G}{\mathcal{G}}

\newcommand{\Ll}{\mathcal{L}}

\newcommand{\Lle}{\stackrel{L(\F)}{<}}

\newcommand{\Or}{\mathcal{O}}
\newcommand{\w}{\widetilde}
\newcommand{\wh}{\widehat}

\newcommand{\Int}{\text{Int}}

\newcommand{\nls}{\text{NLS}}
\newcommand{\lo}{\text{LO}}
\newcommand{\ctf}{\text{CTF}}
\newcommand{\In}{\mathcal{I}_\nls}
\newcommand{\Il}{\mathcal{I}_\lo}
\newcommand{\Ic}{\mathcal{I}_\ctf}
\newcommand{\IR}{\mathcal{I}_\R}

\newcommand{\Home}{\text{Homeo}_+}
\newcommand{\s}{\mathbf{s}}

\newcommand\SL{\operatorname{SL}}

\begin{document}

\footnotetext{2020 \emph{Mathematics Subject Classification}. 57M50.}

\title{Left-orderability in Dehn fillings of pseudo-Anosov mapping tori}
\author{Bojun Zhao$^1$}
\thanks{$^1$D\'epartement de math\'ematiques, Universit\'e du Qu\'ebec \`a Montr\'eal, 201 President Kennedy Avenue, Montréal, QC, Canada H2X 3Y7. 
Email: \href{mailto:bjzhaotopology@gmail.com}{bjzhaotopology@gmail.com}}
\maketitle

\begin{abstract}
For pseudo-Anosov mapping tori with co-orientable invariant foliations
and monodromies reversing their co-orientations,
a family of taut foliations was constructed in previous work
on Dehn fillings with all rational slopes outside a neighborhood
of the degeneracy slope.
In this paper, we prove that all such Dehn fillings
have left-orderable fundamental groups.
We present two approaches,
both establishing left-orderability through the branching behavior
of taut foliations.
The first approach produces an $\R$-covered foliation
arising from this family for each filling slope,
and the second approach shows that, 
depending on the choice of a suitable system of arcs on $\Sigma$,
the resulting foliation either has one-sided branching or is $\R$-covered.
Consequently, the second approach associates to each Dehn filling
a family of representations of its fundamental group into $\G_\infty$,
the group of germs at infinity,
whereas the first approach yields an explicit left-invariant order.
As an application,
combining our results with earlier work in the literature,
we verify the L-space conjecture for all surgeries
on the $(-2,3,2q+1)$-pretzel knot ($q \geqslant 3$) in $S^3$.
From another perspective, 
$\mathbb{R}$-covered foliations can be produced systematically
across a large family of 
Dehn fillings on cusped hyperbolic manifolds, 
and in some cusped manifolds they cover all fillings that 
admit co-orientable taut foliations. 
This expands the class of known $\mathbb{R}$-covered foliations.
\end{abstract}


\section{Introduction}

Throughout this paper,
all $3$-manifolds are orientable,
and all taut foliations are co-orientable.

A central program in $3$-dimensional topology is to establish connections
between Floer theory and various algebraic and geometric
structures of $3$-manifolds.
In this direction, the L-space conjecture was formulated as a guiding principle,
suggesting a unified way to organize $3$-manifolds 
across these different perspectives.

\begin{conj}[L-space conjecture, {\cite{BGW13, Juhasz15}}]
Let $M$ be a closed, orientable, irreducible $3$-manifold. 
Then the following statements are equivalent.

(1)
$M$ is a non-L-space.

(2)
$\pi_1(M)$ is left-orderable.

(3)
$M$ admits a co-orientable taut foliation.
\end{conj}

It was shown by Gabai~\cite{G83} that statement (3) holds for manifolds $M$
with positive first Betti number, and by Boyer-Rolfsen-Wiest~\cite{BRW05} that 
statement (2) also holds in this case.
Moreover, Ozsv\'ath and Szab\'o~\cite{OS04} established that statement (3)
implies statement (1)
(see also \cite{Bowden16, KazezR17}).

Geometric interpretations of left-orderability remain mysterious in general.
However, one of the most direct manifestations comes from
the natural dynamical realizations associated with certain taut foliations.
Recall that taut foliations in closed $3$-manifolds fall into three types
according to their branching behavior:
$\R$-covered, one-sided branching, and two-sided branching.
$\mathbb{R}$-covered foliations serve as an idealized model:
they form an intermediate class between fibrations and general taut foliations,
with a global transverse coordinate in the universal cover while allowing
nontrivial holonomy along the leaves;
foliations with one-sided branching can be viewed as a partially idealized type.
These two types are closely related to the underlying geometry of 
the manifold and the dynamics of the associated flows
\cite{Thu97, Cal00, Fen02, Cal03, Fen12}.
In the context of left-orderability, 
these two types play particularly important roles.
Every co-orientable $\R$-covered foliation induces a faithful $\pi_1$-action on $\R$,
giving rise to an explicit left-invariant order
(see \cite[Theorem 7.10]{CD03}, \cite[Theorem 2.4]{BRW05});
every co-orientable taut foliation with one-sided branching induces 
a faithful $\pi_1$-representation into $\G_\infty$,
the group of germs at infinity,
implying that the ambient $3$-manifold has left-orderable fundamental group~\cite{Z25},
although no explicit left-invariant order is currently known
(compare with \cite[Extension vs.\ realization]{M15}).

The following question, 
proposed by Brittenham-Naimi-Roberts for hyperbolic manifolds~\cite[p. 466]{BrittenhamNR97} 
and by Calegari~\cite[Question 8.3]{Cal02} for atoroidal manifolds, 
asks whether left-orderability and taut foliations can be linked via
$\R$-covered foliations.

\begin{que}\label{que: R-covered exist?}
Let $M$ be an atoroidal $3$-manifold admitting a taut foliation. 
Does $M$ necessarily admit an $\R$-covered foliation?
\end{que}

Some counterexamples were found by Brittenham~\cite{Bri02} among graph manifolds.
In hyperbolic manifolds,
existing examples of $\R$-covered foliations
are typically tied to specific geometric or dynamical features; 
beyond such settings, 
their existence in general remains poorly understood.
Indeed, if a co-orientable taut foliation contains a genuine sublamination,
it must have two-sided branching.
In contrast, when a taut foliation contains no genuine sublamination,
its branching behavior typically displays no obvious features
at the level of the underlying $3$-manifold,
and is therefore difficult to detect in general.
See Subsection~\ref{subsec: at most one-sided branching} for a summary 
in this direction.

\subsection{Left-orderability and Dehn fillings on cusped manifolds}

The study of the L-space conjecture is naturally related to 
Dehn surgeries on knots and links.
For a knot in $S^3$,
properties of the knot itself determine whether 
nontrivial L-space surgeries exist
and which slopes realize them,
with the L-space slopes forming a uniform interval~\cite{OS05, KMOS07, OS10}.
A similar phenomenon also appears for knots
in closed $3$-manifolds \cite{RasmussenR17}.
This motivates a further investigation of surgery slopes
on knots and links in relation to the three aspects of the L-space conjecture:
to determine which slopes yield (1), (2), or (3) via Dehn surgery,
and to understand how these structures arise in the resulting $3$-manifolds.

Left-orderability, as a group-theoretic property,
can be established for Dehn fillings through 
a variety of geometric and topological structures,
including those coming from
$\SL_2(\R)$-character varieties 
(see \cite{CullerD18}, and for example \cite{Gao23, HeraldZ19}),
$\text{SU}(2)$-character varieties \cite{DunfieldR25},
Euler classes associated to taut foliations \cite{BoyerH19, Hu23},
properties of pseudo-Anosov flows \cite{Zung24},
and left-orderable slope-detections~\cite{BGH25}.
In this paper, we derive the left-orderability of
the resulting Dehn fillings from $\R$-covered foliations
or foliations with one-sided branching.



Since all L-space knots in $S^3$ are fibered~\cite{Ni07, Ghiggini08},
it is natural to consider Dehn fillings
of pseudo-Anosov mapping tori
in the context of the L-space conjecture.

\begin{conv}\rm
Let $\varphi: \Sigma \to \Sigma$ be
an orientation-preserving pseudo-Anosov homeomorphism
on a compact orientable surface $\Sigma$
with $\partial \Sigma \ne \emptyset$ and $\chi(\Sigma) < 0$.
Let $M = \Sigma \times I / \stackrel{\varphi}{\sim}$
be the mapping torus of $\varphi$,
that is, the quotient of $\Sigma \times [0,1]$
with respect to the equivalence relation
\[(x,1) \sim (\varphi(x),0), \quad \forall\ x \in \Sigma.\]
Let $\Fs$ and $\Fu$ denote the stable and unstable foliations of $\varphi$ on $\Sigma$.
\end{conv}

We adopt the canonical meridian-longitude coordinate system
for each boundary component following \cite{R01},
which, except for a special case,
coincides with the standard meridian-longitude coordinates
when $M$ is the exterior of a fibered knot in $S^3$.
On a component $T$ of $\partial M$,
all slopes represented by essential simple closed curves on $T$
are parametrized by $\Q \cup \{\infty\}$,
where $\infty=\frac{1}{0}$ by convention.
These slopes are referred to as the \emph{rational slopes} on $T$.

Let $\psi$ denote the suspension flow of $\varphi$ in $M$.
For each boundary component $T$ of $M$,
the set of closed orbits of $\psi$ on $T$
is a union of $2n$ parallel essential simple closed curves
for some $n \in \N_+$,
whose common slope is referred to as the \emph{degeneracy slope} of $T$
and is denoted by $\delta_T$.
The \emph{degeneracy locus} $d(T)$ of $\psi$ on $T$
is a local system identified with an integer pair $(n u; n v)$,
where $\gcd(u,v)=1$, $u > 0$
and $\delta_T = \frac{u}{v} \in \Q \cup \{\infty\}$
in the canonical meridian-longitude coordinates
(with $u=1, v=0$ if $\delta_T=\infty$).
See Definition \ref{def: degeneracy locus} for details.

Suppose that the stable foliation $\Fs$ is co-orientable. 
Then $\Fu$ is also co-orientable, 
and $\varphi$ either preserves or reverses both co-orientations.
When the monodromy $\varphi$ preserves the co-orientations,
it was shown implicitly in \cite{Gabai92} by Gabai that the resulting Dehn filling
admits a co-orientable taut foliation
whenever the filling slope is distinct from the degeneracy slope
on each boundary component.
Building on these foliations, Zung~\cite{Zung24} proved that such a Dehn filling
has left-orderable fundamental group
if the filling slopes have the same sign
with respect to the slope coordinates
in which $\delta_T$ and $\frac{0}{1}$
form an ordered basis on each boundary component $T$.

When $\Sigma$ has connected boundary,
Gabai's work~\cite{Gabai92} implies that $M$ admits at most one Dehn filling
with no co-orientable taut foliation,
and consequently at most one L-space Dehn filling~\cite{OS04}.
The situation is quite different when 
$\varphi$ reverses the co-orientation of $\Fs$. 
In this case, $M$ can be Floer simple, 
so that all rational slopes in some open interval of $\R P^1$ yield
L-space Dehn fillings~\cite{RasmussenR17} 
(see Figure \ref{RP^1} (a) for an illustration of L-space filling slopes).
This is the case on which we focus in this paper;
see Subsection~\ref{subsec: applications}
for several families of examples.

\begin{conv}\rm\label{convention on filling multislopes}
Let $T_1, \ldots, T_r$ denote the boundary components of $M$,
and we denote by $(p_i;q_i)$ the degeneracy locus on $T_i$.
For any $s_1,\ldots,s_r \in \Q \cup \{\infty\}$,
we denote by $M(s_1,\ldots,s_r)$
the Dehn filling of $M$ along $\partial M$
with slope $s_i$ on $T_i$.
For each $1 \leqslant i \leqslant r$,
choose a boundary component $C_i \subseteq \partial \Sigma$ with
$C_i \times \{0\} \subseteq T_i$,
and define $c_i$ to be the order of $C_i$ under $\varphi$, namely
\[c_i = \min \{ k \in \mathbb{N}_+ \mid \varphi^{k}(C_i) = C_i \}.\]
\end{conv}

The following theorem was proved in \cite{Z26}.

\begin{figure}
	\centering
	\subfigure[]{
	\includegraphics[width=0.3\textwidth]{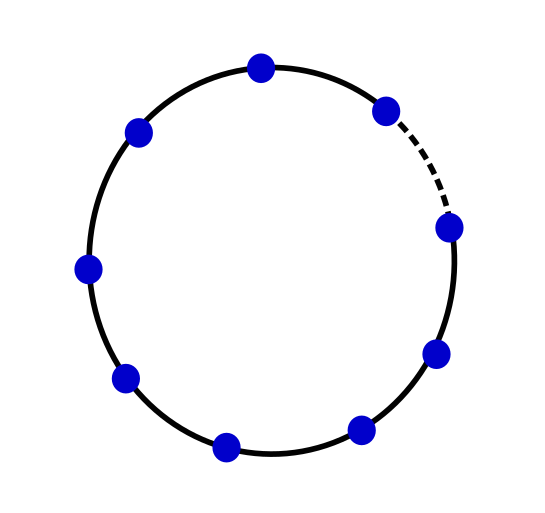}}
\subfigure[]{
	\includegraphics[width=0.3\textwidth]{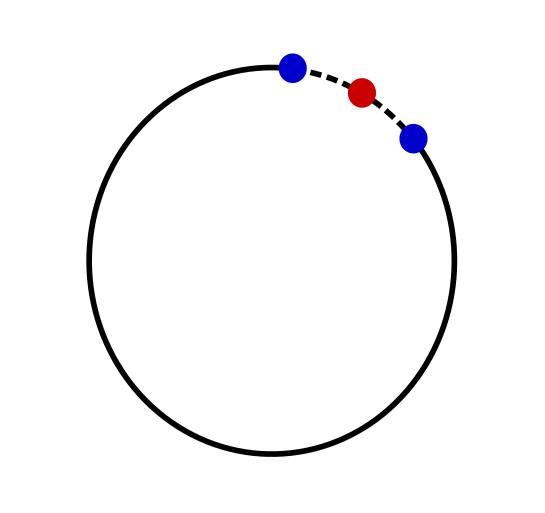}}
	\caption{For a Floer simple knot manifold,
there exists a finite set $P$ determined by the Turaev torsion
such that a rational slope yields an L-space Dehn filling
if and only if it lies in the closure of a component of
$(\R \cup \{\infty\}) - P$ \cite{RasmussenR17}.
In (a), the set $P$ is shown as blue dots.
The dashed segment represents the closed interval of L-space slopes,
and every rational slope in its complement (the solid segment)
yields a non-L-space filling.
In (b), we illustrate the slopes appearing in Theorem~\ref{foliation}.
The degeneracy slope $\frac{p_i}{q_i}$ is shown as the red dot,
the two bounds $\frac{p_i}{q_i \pm c_i}$ as blue dots,
the interval $J_i$ as the solid segment,
and the remaining neighborhood of the degeneracy slope
as the dashed segment.}\label{RP^1}
\end{figure}

\begin{THM}[{\cite{Z26}}]\label{foliation}
Suppose that $\Fs$ is co-orientable and $\varphi$ reverses its co-orientation.
For each $1 \leqslant i \leqslant r$, let $J_i$ be the open interval in
$\R \cup \{\infty\} \cong \R P^{1}$ between
$\frac{p_i}{q_i + c_i}$ and $\frac{p_i}{q_i - c_i}$
which does not contain $\frac{p_i}{q_i}$.
Fix a slope $s_i \in J_i \cap (\Q \cup \{\infty\})$ for each $i$,
and let $\s$ denote the multislope $(s_1,\ldots,s_r)$.
Then the Dehn filling $M(\s)$ admits a co-orientable taut foliation.
\end{THM}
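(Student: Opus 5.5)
The plan is to build a branched surface in $M$ from fiber copies of $\Sigma$ together with product disks over a system of arcs, and then use the co-orientation reversal to control the boundary train tracks on each $T_i$.

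\textbf{Step 1: the arcs and disks.} Choose a finite system of disjoint properly embedded arcs $\alpha_1,\ldots,\alpha_m$ in $\Sigma$. Each arc should be transverse to $\Fs$, and the collection should cut $\Sigma$ into pieces compatible with the singular structure of $\Fs$, for instance one arc near each boundary prong. Form the branched surface $B$ as the union of the fiber $\Sigma \times \{0\}$ and the product disks $\alpha_j \times [0,1]$. At each disk, smooth the branch locus in the direction dictated by the co-orientation of $\Fs$ along $\alpha_j$. Because $\varphi$ reverses this co-orientation, after one trip around the mapping torus the smoothing directions at the two ends of a disk are opposite. This mismatch is exactly what produces slopes on the far side of $\frac{p_i}{q_i}$.

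\textbf{Step 2: the boundary train tracks.} Compute the train track $\tau_i = B \cap T_i$. It consists of $C_i \times \{0\}$ together with its $\varphi$-orbit, which meets $T_i$ in $c_i$ curves, plus the vertical segments coming from the endpoints of the arcs. Then identify the set of slopes carried by $\tau_i$.
\begin{itemize}
\item The degeneracy locus $(p_i;q_i)$ records how the $2n$ prongs on $C_i$ are permuted and rotated by the first return of $\varphi^{c_i}$.
\item Each vertical segment lets the carried curve shift by one prong in either direction. This gives carried slopes $\frac{a p_i}{a q_i + b}$ with $|b|$ ranging up to $a c_i$, with the sign flip caused by the orientation reversal.
\end{itemize}
I expect $\tau_i$ to fully carry every rational slope in the closure of $J_i$ except the two endpoints. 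One may need two branched surfaces, one for each half of $J_i$ relative to $\infty$ or $0$, with opposite smoothings, and patch the results. For each $s_i$ I then take the lamination carried by $B$ whose boundary is a union of curves of slope $s_i$, and cap it off with meridian disks in the filling solid tori.

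\textbf{Step 3: laminarity and tautness.} Show that $B$ is a laminar branched surface in the sense of Li, so that it carries an essential lamination. This means checking the following:
\begin{itemize}
\item there are no sink disks and no half sink disks;
\item $\partial_h N(B)$ is incompressible and there are no monogons;
\item no complementary region is a Reeb-type region.
\end{itemize}
Since $\Sigma$ is a fiber, the horizontal boundary consists of pieces of $\Sigma$, which makes these checks manageable. Sink disks are excluded because each branch sector of $\Sigma$ cut along the arcs has at least one branch direction pointing out, and this is where the choice of arcs transverse to the co-oriented $\Fs$ is used. Next, apply Li's theorem: a laminar branched surface with $\tau_i$ fully carrying $s_i$ yields an essential lamination in $M(\s)$. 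Extend it to a co-orientable taut foliation by filling the complementary regions, which are product-like $I$-bundles over surfaces together with solid tori. The co-orientation of the foliation comes from the transverse orientation of the fibers.

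\textbf{Main obstacle.} The hardest step is Step 2: computing exactly the slopes fully carried by $\tau_i$ and matching them to the bounds $\frac{p_i}{q_i \pm c_i}$. This requires careful bookkeeping of how $\varphi^{c_i}$ rotates prongs on $C_i$ and how the co-orientation reversal enters. I would first do the case $c_i = 1$ with a single arc per prong, verify the endpoints, and then handle general $c_i$ by tracking the orbit of $C_i$.
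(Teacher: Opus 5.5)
Your overall architecture is the one used in \cite{Z26} and summarized in Subsection~\ref{subsec: admissible system of arcs}: the fiber plus product disks over arcs positively transverse to $\Fs$, coherent cusps, Li's laminar branched surface theory, boundary train tracks, and capping off. The gap is Step~2, the step that must produce $J_i$, and there the mechanism and formula you give are wrong, not merely unverified. If each vertical segment let a carried curve shift by one prong \emph{in either direction}, zero net shift would be possible, so the degeneracy slope $\frac{p_i}{q_i}$ (your $b=0$) would be carried. Your family $\frac{ap_i}{aq_i+b}$ with $|b|\leqslant ac_i$ is exactly $\{\frac{p_i}{x} : x\in[q_i-c_i,q_i+c_i]\}$, which is the closed neighborhood of $\frac{p_i}{q_i}$ \emph{complementary} to $J_i$. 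So the conclusion you then ``expect'' contradicts the computation you sketch, and nothing in your argument keeps the boundary slopes away from the degeneracy slope.

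The missing idea is a one-way constraint coming from the co-orientation reversal.
\begin{enumerate}[(1)]
\item Co-orientability of the branched surface forces the cusps at the two ends of each product disk onto opposite sides of the arc: left of $\alpha_j$ at the lower arc and right at the upper arc, as in Definition~\ref{branched surface}.
\item The upper arc $\varphi(\alpha_j)$ is \emph{negatively} transverse to $\Fs$. Hence every cusp, lower or upper, points in the same positive leafwise direction of $\Fs$.
\item The segments of $\partial\Sigma$ between consecutive singularities of $\Fs$ are leaves whose leafwise orientations alternate around each boundary circle.
\item A carried curve leaving the top of a vertical segment continues in the positive leafwise direction of the segment it lands on. It can turn up into a vertical segment only while travelling in the negative leafwise direction.
\item So before each upward turn it must cross an odd number, at least one, of boundary singularities, always in the same rotational direction. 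The vertical segments themselves are flow segments and cross no closed orbit.
\end{enumerate}
For a carried curve with meridional degree $a\geqslant 0$ and longitudinal degree $l$, i.e.\ $b=lp_i-aq_i$ in your notation, this gives
\[
|b|=|lp_i-aq_i|\geqslant ac_i ,
\]
which is the closure of $J_i$, not its complement.

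Approaching the bounds $\frac{p_i}{q_i\pm c_i}$ uses that every open segment of $\partial\Sigma$ minus the singularities of $\Fs$ contains exactly one arc endpoint, with endpoints disjoint from their $\varphi$-images (conditions (ii) and (iii) of Definition~\ref{system of arcs}). Your ``one arc near each boundary prong'' does not pin this down, and a worse placement forces extra crossings and shrinks the interval. The same cusp pattern puts the two cusps of each vertical segment on opposite sides of it, so every complementary region of $\tau_i$ is a bigon, which Li's boundary train track theorem needs. Finally, clockwise and counterclockwise curves in the same track realize the two halves of $J_i$ on either side of the slope $0$, so no second branched surface is needed.
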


See Figure \ref{RP^1} (b) for an illustration of the interval $J_i$.
Throughout the introduction, 
we use $J_i$ to denote this interval of slopes on $T_i$.


An \emph{admissible system of arcs} with respect to $(\Sigma,\varphi)$ is 
a family of disjoint properly embedded oriented arcs 
positively transverse to $\Fs$ and disjoint from its singularities,
with certain additional conditions; 
see Definition \ref{system of arcs} for details.
Under the assumptions of Theorem~\ref{foliation},
each admissible system of arcs $\alpha$ with respect to $(\Sigma,\varphi)$
determines a co-orientable taut foliation $\F_\alpha(\s)$ of $M(\s)$;
see Proposition \ref{prop: construction from the new branched surface} 
for the correspondence.
The main result of this paper is the following theorem.

\begin{THM}\label{main}
Suppose that $\Fs$ is co-orientable and $\varphi$ reverses its co-orientation.
Fix a slope $s_i \in J_i \cap (\Q \cup \{\infty\})$ for each
$1 \leqslant i \leqslant r$,
and let $\s$ denote the multislope $(s_1,\ldots,s_r)$.

\textnormal{(a)}
There exists an admissible system of arcs
$\alpha^*$ with respect to $(\Sigma,\varphi)$
such that the induced foliation $\F_{\alpha^*}(\s)$ is $\R$-covered.

\textnormal{(b)}
For any admissible system of arcs
$\alpha$ with respect to $(\Sigma,\varphi)$,
the foliation $\F_\alpha(\s)$
either has one-sided branching or is $\R$-covered.
\end{THM}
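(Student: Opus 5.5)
The plan is to analyze the leaf space of the lifted foliation $\w{\F}_\alpha(\s)$ in the universal cover $\w{M(\s)}$ through the branched surface $B_\alpha$ that carries $\F_\alpha(\s)$ (Proposition~\ref{prop: construction from the new branched surface}). By construction, $B_\alpha$ is obtained from the fiber $\Sigma \times \{0\}$ together with product disks $\alpha \times I$, smoothed according to the co-orientation of $\Fs$, and then extended across the filling solid tori by the slope data $s_i \in J_i$. Branching of $\w{\F}_\alpha(\s)$ then corresponds to pairs of nonseparated leaves in $\w{M(\s)}$. Such a pair must come from sectors of $B_\alpha$ where the branch locus makes two sheets merge. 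The direction of merging (positive or negative side) is determined by the cusp directions of the smoothings along the arcs in $\alpha$.

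\textbf{Step 1 (part (b)).} Every arc of $\alpha$ is positively transverse to $\Fs$. Hence every branch curve of $B_\alpha$ has its cusp pointing in a fixed direction relative to the co-orientation of the fiber. The first sub-step is to show that any nonseparated pair of leaves of $\w{\F}_\alpha(\s)$ is separated by a lift of a branch curve, so the pair lies on one definite side of the leaves they limit on. Concretely, the goal is a statement of the form: if $\lambda_1, \lambda_2$ are nonseparated in the leaf space, they are both limits of a sequence of leaves from the positive side. The reversal of the co-orientation by $\varphi$ has to be handled carefully here. Crossing the fiber once flips the co-orientation of $\Fs$, but the orientation of the arcs (transverse to $\Fs$) flips with it. The net effect on the cusp direction relative to the transverse orientation of $\F_\alpha(\s)$ should therefore be constant. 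This consistency is what rules out branching in both directions. The conclusion is that the leaf space is a (possibly non-Hausdorff) 1-manifold branching only in one direction, so $\F_\alpha(\s)$ is either $\R$-covered or has one-sided branching.

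\textbf{Step 2 (part (a)).} Choose $\alpha^*$ so that no branching occurs at all. I would take $\alpha^*$ to be a system for which the complement $\Sigma - \alpha^*$ is a union of disks and boundary-parallel annuli, and whose image $\varphi(\alpha^*)$ is isotopic, through arcs transverse to $\Fs$, to a system interleaving with $\alpha^*$ in a controlled way. For example, choose $\alpha^*$ from a train-track or Markov partition of $\varphi$ so that each arc of $\varphi(\alpha^*)$ lies in a single rectangle. Then I would show directly that the leaves through any two points of $\w{M(\s)}$ are joined by a transversal meeting each leaf once. Equivalently, the positive-side limits found in Step 1 are unique. One way to show this is to exhibit the leaves, in a neighborhood of each lifted fiber, as graphs over that lifted fiber. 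Another is to verify that every sector of $B_{\alpha^*}$ meets the lifted branch locus only from one side in a way that gives a monotone transverse coordinate.

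\textbf{Main obstacle.} I expect Step 2 to be the main difficulty, in particular the behavior near the filled solid tori. The foliation there is a spiral or Reeb-free filling determined by $s_i \in J_i$, and nonseparated leaves could a priori accumulate on the core curves. My first attempt would be to reduce to the cusped manifold. I would show that the restriction of $\w{\F}_{\alpha^*}(\s)$ to the preimage of $M$ has Hausdorff leaf space, using the choice of $\alpha^*$ adapted to the degeneracy locus (arcs ending between consecutive prongs at each $C_i$ with multiplicity matched to $c_i$). I would then check that gluing in the solid tori, whose leaves are monotone in the transverse direction since $s_i$ avoids the neighborhood of $\frac{p_i}{q_i}$, creates no new nonseparated pairs.
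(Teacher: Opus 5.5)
\textbf{Gap in Step 1 (part (b)).} The mechanism you propose does not control branching, and the ingredient that does is missing. Cusp directions of $B_\alpha$ are local and tangent to $B_\alpha$. One-sidedness, by contrast, is a global statement in the universal cover: any two leaves of $\w\F$ must have a common lower bound. Nothing in your outline justifies that a non-separated pair is ``separated by a lift of a branch curve.'' Moreover, the leaves of $\F_\alpha(\s)$ in the complementary product regions and in the filling solid tori are not carried by $B_\alpha$ at all.

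The relevant consequence of the co-orientation reversal is also an alternation, not a constancy. In a lifted regular leaf or half-leaf $l$ of $\w{\Fws}$, all cusps of the train track $\tau_l=l\cap\w{B_\alpha}$ point along the $\Fs$-induced orientation of $\rho_k=(\w\Sigma\times\{k\})\cap l$. These orientations \emph{alternate} in $k$ relative to a continuous orientation of the slices. This alternation traps every carried line in at most two consecutive slices, so it crosses every $\w\psi$-orbit in $l$ (Lemma~\ref{lem: intersects at most two}, Proposition~\ref{prop: l and Fws transverse}). That gives ``no branching at all along stable leaves'' for every admissible $\alpha$; it does not give one-sidedness.

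One-sidedness comes from dynamics, in three steps:
\begin{enumerate}[(1)]
\item By Lemma~\ref{lem: always pseudo-Anosov flow}, $\Delta(s_i,d(T_i))\geqslant 2$, so Fried's surgery yields a pseudo-Anosov flow $\phi$ on $W=M(\s)$ transverse to $\F$.
\item Two orbits in a leaf of $\w\Ewu$ are backward asymptotic. A Lebesgue-number argument with product charts for $(\F,\phi)$ then shows that a leaf through a sufficiently negative point of one orbit meets the other. Hence there is no negative branching along unstable leaves.
\item Since $\Or(\phi)\cong\R^2$ and the singular orbits and filled cores are discrete in it, any two orbits are joined by a broken stable/unstable path avoiding them. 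Propagating along this path produces a leaf below any two given leaves.
\end{enumerate}

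\textbf{Gap in Step 2 (part (a)).} The property you try to build into $\alpha^*$ (Markov partitions, disk/annulus complements, endpoints adapted to $c_i$) is not the relevant one. Your ``graphs over the lifted fiber'' and ``monotone transverse coordinate'' restate the goal rather than supply a mechanism; after filling there is in general no lifted fiber to be a graph over.

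What works is to take arcs positively transverse to \emph{both} $\Fs$ and $\Fu$. The paper builds them from pairs of $\Fu$-prongs joined along an $\Fs$-segment, then smooths the crossings. The train-track argument then applies verbatim to $\w{\Fwu}$. So every orbit in a stable or unstable leaf meets every leaf of $\w\F$ that meets that leaf. The same orbit-space propagation then shows each leaf of $\w\F$ meets every orbit of $\w\phi$, whence $L(\F)\cong\R$.

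This also dissolves your ``main obstacle.'' The filled solid tori collapse to closed orbits of $\phi$, which the broken paths simply avoid. So there is no need to prove Hausdorffness on the preimage of $M$ in $\w W$ (which is not the universal cover of $M$) and then glue.
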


In both (a) and (b),
the resulting foliation on $M(\s)$ establishes
the left-orderability of $\pi_1(M(\s))$.
The $\R$-covered foliation produced in (a)
gives rise to an explicit left-invariant order.
More generally, 
(b) implies that every foliation
$\F_\alpha(\s)$
induces a faithful representation into the group of germs at infinity.
Consequently, we obtain the following corollary.

\begin{COR}
Fix a slope $s_i \in J_i \cap (\Q \cup \{\infty\})$ for each
$1 \leqslant i \leqslant r$,
and let $\s$ denote the multislope $(s_1,\ldots,s_r)$.

(a)
We can construct an explicit left-invariant order of $\pi_1(M(\s))$ from 
the $\R$-covered foliation in Theorem \ref{main} (a).

(b)
For any admissible system of arcs with respect to $(\Sigma,\varphi)$,
there exists an induced faithful representation
\[\pi_1(M(\s)) \to \G_\infty.\]
\end{COR}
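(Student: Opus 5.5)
The corollary is a formal consequence of Theorem~\ref{main} combined with the known results, cited in the introduction, relating branching behavior to orderability. We first arrange the hypotheses. By Theorem~\ref{foliation} and Proposition~\ref{prop: construction from the new branched surface}, each admissible system of arcs $\alpha$ gives a co-orientable taut foliation $\F_\alpha(\s)$ of the closed manifold $M(\s)$. Since $M(\s)$ carries a taut foliation, it is irreducible (Novikov), and its universal cover is $\R^3$ (Palmeira). Hence $\pi_1(M(\s))$ is infinite and torsion-free, and the leaf space of the lifted foliation $\w{\F}_\alpha(\s)$ is a simply connected, possibly non-Hausdorff $1$-manifold. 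The co-orientation makes it oriented, and $\pi_1(M(\s))$ acts on it by orientation-preserving homeomorphisms.

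For (a), take $\alpha^*$ from Theorem~\ref{main}(a). Then the leaf space of $\w{\F}_{\alpha^*}(\s)$ is homeomorphic to $\R$, and deck transformations give a homomorphism $\rho\colon \pi_1(M(\s)) \to \Home(\R)$.

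The one genuine point is to check that $\rho$ is faithful, or to reduce to a faithful case. Following \cite[Theorem 7.10]{CD03} and \cite[Theorem 2.4]{BRW05}, an element acting trivially on the leaf space preserves every leaf of $\w{\F}_{\alpha^*}(\s)$. Each leaf is a properly embedded plane on which $\pi_1$ acts freely, so the stabilizer of a leaf is a surface group. The kernel of $\rho$ would therefore be a normal subgroup contained in every leaf stabilizer. We rule this out as follows: every leaf of a taut foliation in a closed atoroidal-or-not $3$-manifold is dense in some minimal set, and a nontrivial element fixing all leaves would force $\F$ to be a product foliation on a cover, contradicting the compactness of $M(\s)$.

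Alternatively, we can cite \cite[Theorem 2.4]{BRW05} in its general form. There $\rho$ need not be faithful: $\pi_1$ is left-orderable as soon as $M(\s)$ is irreducible and admits a nontrivial homomorphism to a left-orderable group. The explicit order is then obtained from the order on $\R$ by fixing a leaf and using a lexicographic refinement along the kernel. We would follow the CD03 construction: choose a point $t_0 \in \R$ with trivial stabilizer, or use a countable dense sequence $t_0, t_1, \ldots$, and define $g > 1$ if $\rho(g)(t_k) > t_k$ at the first $k$ with $\rho(g)(t_k) \neq t_k$. This requires faithfulness, which holds by Candel--Conlon/Calegari for $\R$-covered taut foliations of closed manifolds with universal cover $\R^3$. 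We regard this as the step needing the most care, but it is a citation rather than new work.

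For (b), fix any admissible $\alpha$. By Theorem~\ref{main}(b), $\F_\alpha(\s)$ is either $\R$-covered or has one-sided branching.

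In the one-sided branching case, we apply \cite{Z25} directly. The leaf space, a one-sided branched simply connected $1$-manifold, has a well-defined end at infinity in the non-branching direction. The germs of the $\pi_1$-action at that end give a homomorphism to $\G_\infty$, and \cite{Z25} proves this homomorphism is faithful.

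In the $\R$-covered case, the faithful action on $\R$ from part (a) yields a representation into $\G_\infty$ by taking germs at $+\infty$. Faithfulness requires that no nontrivial element acts as the identity near $+\infty$. We would handle this by invoking the corresponding statement in \cite{Z25} for $\R$-covered foliations. If that is unavailable, we would use the standard fact that the holonomy of an $\R$-covered taut foliation is either uniform or has no compactly supported elements commuting appropriately; a nontrivial element supported away from $+\infty$ contradicts the minimality-type argument above. This is the main obstacle, and our first attempt would be to reduce it to the cited theorem of \cite{Z25}, which treats taut foliations with at most one-sided branching uniformly.
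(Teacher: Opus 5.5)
Your skeleton matches the paper's. The paper deduces the corollary purely from Theorem~\ref{main} plus citations.
\begin{itemize}
\item For (a), the $\R$-covered foliation $\F_{\alpha^*}(\s)$ gives a faithful action of $\pi_1(M(\s))$ on $\R$ by \cite[Theorem 7.10]{CD03}. Conrad's dense-sequence construction (\cite{Con59}, \cite[Theorem 2.4]{BRW05}) then turns this action into an explicit order.
\item For (b), the argument of \cite{Z25} is applied in both cases allowed by Theorem~\ref{main}(b).
\end{itemize}
However, two of the justifications you supply yourself are wrong.

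First, your argument that the leaf-space action $\rho$ is faithful does not work, and its conclusion is false in the generality you state. Let $N$ be a closed surface bundle with fiber $S$. The fibration is an $\R$-covered taut foliation of a compact manifold with universal cover $\R^3$. Yet $\pi_1(S)$ preserves every leaf of the lifted foliation, so it lies in the kernel of the leaf-space action. Hence neither compactness nor a minimal-set argument can force $\ker\rho=1$. (Leaves need not be dense in a minimal set in any case.) The blanket faithfulness statement you attribute to ``Candel--Conlon/Calegari'' is therefore not a theorem about $\rho$.

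You should invoke the cited result as the paper does. If you want to work with $\rho$ itself, you must handle a possible kernel $K$ by a lexicographic order. This uses that $K$ lies in the fundamental group of a leaf, which is a surface group with an explicit order. Note also that ``irreducible plus a nontrivial homomorphism to a left-orderable group implies left-orderable'' is Boyer--Rolfsen--Wiest's main theorem, not their Theorem 2.4. It goes through Burns--Hale and produces no explicit order, so it cannot prove (a).

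Second, in the $\R$-covered case of (b), taking germs at $+\infty$ of a faithful action on $\R$ need not give a faithful map to $\G_\infty$: any nontrivial element whose support is bounded above becomes trivial. Your fallback sentence about uniform holonomy and compactly supported elements is not an argument. What closes this case in the paper is the observation that the proof of Theorem~\ref{LO one-sided branching} in \cite{Z25} uses only that the foliation has no branching in one direction. That proof therefore applies verbatim to $\R$-covered foliations. This should be your argument for the $\R$-covered case, not a fallback. With these two replacements, your proof coincides with the paper's.
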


Combining Theorem~\ref{main} with Theorem~\ref{foliation} and \cite{OS04},
we have the following summary.
Although many Dehn fillings of $M$ may be L-spaces,
we can always produce $3$-manifolds satisfying
the three conditions of the L-space conjecture simultaneously,
whenever the filling slope on every boundary component lies in 
the range outside a neighborhood of the degeneracy slope.

\begin{COR}
Let $s_i$ be a slope on $T_i$ contained in
$J_i \cap (\Q \cup \{\infty\})$ for each $1 \leqslant i \leqslant r$,
and let $\s$ denote the multislope $(s_1,\ldots,s_r)$.
Then $M(\s)$ is a non-L-space that admits a co-orientable taut foliation
and has left-orderable fundamental group.
In particular,
$M(\s)$ admits a co-orientable $\R$-covered foliation.
\end{COR}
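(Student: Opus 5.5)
This corollary is a direct assembly of results already stated, so my plan is to combine them rather than build anything new. I split it into its four assertions.

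\emph{Taut foliation.} By Theorem~\ref{foliation}, applied with the same choice $s_i \in J_i \cap (\Q \cup \{\infty\})$ on each $T_i$, the filling $M(\s)$ admits a co-orientable taut foliation; concretely, one may take $\F_\alpha(\s)$ for any admissible system of arcs $\alpha$.

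\emph{Non-L-space.} This follows from Ozsv\'ath--Szab\'o~\cite{OS04}: a closed $3$-manifold with a co-orientable taut foliation is not an L-space. To apply this I need $M(\s)$ to satisfy the hypotheses of \cite{OS04} (closed and orientable) and the standing irreducibility assumption of the L-space conjecture. Irreducibility holds because a manifold carrying a taut foliation is either $S^2\times S^1$ or irreducible, by Novikov and Rosenberg. The case $S^2 \times S^1$ does not affect the non-L-space conclusion, since $b_1>0$ there.

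\emph{$\R$-covered foliation and left-orderability.} By Theorem~\ref{main}~(a) there is an admissible system $\alpha^*$ with $\F_{\alpha^*}(\s)$ a co-orientable $\R$-covered foliation; this gives the final sentence. The lift of this foliation to $\widetilde{M(\s)}$ has leaf space homeomorphic to $\R$, and $\pi_1(M(\s))$ acts on it by orientation-preserving homeomorphisms, orientation preservation coming from co-orientability. Since $M(\s)$ is irreducible, $\pi_1(M(\s))$ is infinite, because the universal cover of a manifold with a taut foliation is $\R^3$. So the action is nontrivial, and \cite[Theorem 7.10]{CD03} or \cite[Theorem 2.4]{BRW05} gives a left-invariant order. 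Alternatively, Theorem~\ref{main}~(b) together with \cite{Z25} gives the same conclusion.

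\emph{Where care is needed.} I expect the only real obstacle to be verifying these standing hypotheses: irreducibility, and faithfulness or nontriviality of the action, so that the cited results apply verbatim. This is handled by the facts above: tautness implies the universal cover is $\R^3$, and the action of an infinite group on the leaf space of an $\R$-covered foliation yields left-orderability.
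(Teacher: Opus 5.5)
Your proposal is correct and matches the paper, which obtains this corollary by combining Theorem~\ref{foliation}, Theorem~\ref{main}, and \cite{OS04}, with left-orderability coming from the $\R$-covered foliation of Theorem~\ref{main}~(a) (or from Theorem~\ref{main}~(b) via \cite{Z25}). One small repair: nontriviality of the $\pi_1$-action on the leaf space follows from compactness, since a trivial action would give a continuous surjection $M(\s)\to\R$, and not from $\pi_1(M(\s))$ being infinite; with only a nontrivial (rather than faithful) action, you should then invoke the criterion of \cite{BRW05} for irreducible manifolds, namely that a nontrivial homomorphism to a left-orderable group such as $\Home(\R)$ suffices.
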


\subsection{Applications in specific knots}\label{subsec: applications}

We offer some applications in this subsection.
For a manifold $N$, which is a knot manifold, a link manifold, or a cusped
hyperbolic $3$-manifold, we denote by $\In(N), \Il(N), \Ic(N)$ the sets
of rational slopes yielding Dehn fillings 
which are non-$L$-spaces, 
have left-orderable fundamental groups, or admit co-orientable taut foliations,
respectively.

Suppose that $\Sigma$ has genus one and connected boundary. 
Note that any pseudo-Anosov mapping class on $\Int(\Sigma)$ corresponds to
a hyperbolic element of $\mathrm{SL}(2,\Z)$~\cite[page 54]{FarbM11},
and the associated linear Anosov automorphism of the torus determines
stable and unstable foliations given by curves without singularities.
Hence $\Fs$ is non-singular and co-orientable.

Let $\delta$ denote the degeneracy slope of $\psi$ on $\partial M$.
If $\delta=\infty$, then $\varphi$ must preserve the co-orientation on $\Fs$,
and hence all Dehn fillings with slopes except $\infty$ admit
a co-orientable taut foliation by Gabai~\cite{Gabai92}
(later by Roberts~\cite{R00, R01} via a different construction),
and also have left-orderable fundamental group by Zung~\cite{Zung24}.

Now assume otherwise.
Up to a choice of orientation, we may assume that $\delta$ is positive.
It was already known from the results of Roberts in 2001~\cite{R00, R01} that
$M(s)$ admits a co-orientable taut foliation for all
$s\in(-\infty,1]\cap\Q$.
In this case, $\varphi$ must reverse the co-orientation on $\Fs$,
so Theorem~\ref{main} applies.
This establishes the left-orderability of these manifolds.

\begin{prop}\label{prop: once-puncture}
    Suppose that $\Sigma$ has genus one and a unique boundary component,
    and that the degeneracy slope on $\partial M$ is positive.
    Then all rational slopes in $(-\infty, 1]$ are contained in 
    $\In(M), \Il(M), \Ic(M)$ simultaneously.
\end{prop}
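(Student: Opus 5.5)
The plan is to reduce Proposition~\ref{prop: once-puncture} to Theorem~\ref{main} together with Theorem~\ref{foliation} and \cite{OS04}, by computing the interval $J$ explicitly in the once-punctured torus case. Here $r=1$ and $\partial\Sigma$ is connected, so $c_1=1$. Write the degeneracy locus as $(p;q)=(nu;nv)$ with $\delta=u/v>0$.

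First I will pin down $n$. Since $\Fs$ is nonsingular, the number of prongs at the boundary is governed by the closed orbits of the suspension flow on $T$. For a linear Anosov map on the punctured torus, the boundary circle meets two stable and two unstable prongs, so the $2n$ parallel closed orbits on $T$ force $n=1$ once one tracks how $\varphi$ permutes the prongs. When $\varphi$ reverses the co-orientation (negative trace), $\varphi$ swaps the two stable prongs. In Roberts' canonical coordinates this gives $\delta=1/v$ with $v\geqslant 1$, so $(p;q)=(1;v)$. I expect the main obstacle to be justifying this normalization, namely that positivity of $\delta$ forces $p=1$ and $q>0$. I would take this from Roberts~\cite{R01}, whose computation of the degeneracy locus for punctured-torus bundles is exactly what yields the interval $(-\infty,1]$ quoted above. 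It also confirms that we are in the co-orientation-reversing case, so Theorem~\ref{main} applies.

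Next, $J$ is the open interval between $\frac{1}{v+1}$ and $\frac{1}{v-1}$ not containing $\frac1v$. When $v\geqslant 2$ this arc passes through $\infty$ and consists of the slopes $s>\frac{1}{v-1}$ together with $s<\frac{1}{v+1}$, so it contains $(-\infty,0]$ and $\infty$. When $v=1$ the endpoints are $\frac12$ and $\infty$, and $J$ contains $(-\infty,\tfrac12)$. In either case $J$ does not cover all of $(-\infty,1]$. For instance, with $v=1$ the slopes in $[\tfrac12,1]$ are missing.

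This gap suggests a different normalization: Roberts' coordinates may place $\delta$ so that the relevant interval of taut foliations is $(-\infty,1]$ exactly. One candidate is $\delta=\frac{p}{q}$ with $(p;q)=(2;1)$, since then $J$ runs between $\frac{2}{2}=1$ and $\frac{2}{0}=\infty$, avoiding $2$, so that $J\supseteq(-\infty,1)$. The endpoint $1$ itself is not in the open interval $J$. I would therefore handle the endpoint slope $1$ separately, either by a direct argument from Roberts' foliation for that slope combined with Theorem~\ref{main}-style branching analysis, or by noting that Theorem~\ref{foliation} and Theorem~\ref{main} in \cite{Z26} allow closed endpoints when $c_i=1$.

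So the concrete plan is as follows. Determine $(p;q)$ from the trace of the monodromy via Roberts' formulas, and check that the resulting $J\cap\Q$ contains $(-\infty,1]$, up to the endpoint issue. Then apply Theorem~\ref{main}(a) to obtain an $\R$-covered foliation, which gives left-orderability and hence membership in $\Il(M)$. Theorem~\ref{foliation} gives membership in $\Ic(M)$, and \cite{OS04} then gives membership in $\In(M)$. The identification of the degeneracy locus, and with it the endpoint $1$, is the step I expect to be hardest.
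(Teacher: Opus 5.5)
Your reduction is the paper's own argument: cite Roberts for the taut foliations, note that a positive degeneracy slope forces $\varphi$ to reverse the co-orientation, apply Theorem~\ref{main}, and use \cite{OS04} for the non-L-space condition. Two steps of your plan fail as written.

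\textbf{The normalization of the degeneracy locus.} Your first computation, $(p;q)=(1;v)$, is impossible. Here $p$ is the number of boundary singularities of $\Fs$ on $\partial\Sigma$, which is $2$ (the two stable prongs at the puncture), and it must be even because $\Fs$ is co-orientable. So $\delta=2/v$, not $1/v$.

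Your second candidate $(2;1)$ is correct, and it needs no trace formulas from \cite{R01}. With $c=1$, the parity relation $q_i\equiv c_i \pmod 2$ recorded before Lemma~\ref{lem: always pseudo-Anosov flow} makes $q$ odd. Hence $\Delta(\lambda_T,\delta_T)=2$, and the convention $\mu_T=\mu''_T$ forces $\delta_T=2$, i.e.\ $(p;q)=(2;1)$; compare case (3) of the remark after Corollary~\ref{cor: always pA}. The same computation explains why positivity forces the reversing case: if $\varphi$ preserved the co-orientation, $q$ would be even, $\Delta(\lambda_T,\delta_T)=1$, and the convention would give $\delta_T=\infty$. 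Thus $J=(-\infty,1)$.

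\textbf{The endpoint $1$.} Neither of your fallbacks works.
\begin{itemize}
\item Theorem~\ref{foliation} and Theorem~\ref{main} (the latter is in this paper, not \cite{Z26}) hold only on the open interval $J$; there is no closed-endpoint version when $c_i=1$.
\item At $s=1$ we have $\Delta(1,d(T))=|2\cdot 1-1\cdot 1|=1$, so Fried's surgery creates a $1$-prong orbit. There is then no pseudo-Anosov flow on $M(1)$ and no foliation $\F_\alpha(1)$ carried by $B_\alpha$. The orbit-space argument of Proposition~\ref{regulating} and Section~\ref{sec: one-sided branching} therefore has nothing to work with.
\end{itemize}
The paper does not treat $1$ either. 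Theorem~\ref{main} only reaches $J\cap\Q=(-\infty,1)\cap\Q$, and the closed bracket rests solely on the citation of Roberts.

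In fact the endpoint appears to be false in this generality. Take $m003$, the bundle with monodromy $A=\left(\begin{smallmatrix}-2&-1\\-1&-1\end{smallmatrix}\right)$. With $S=\left(\begin{smallmatrix}0&-1\\1&0\end{smallmatrix}\right)$ one has $SAS^{-1}=A^{-1}$ and $S^{-1}AS=A^{-1}$. So $(x,t)\mapsto(Sx,1-t)$ is a well-defined orientation-reversing self-homeomorphism. It sends fibers to fibers and suspension orbits to suspension orbits, so it fixes the slopes $\lambda_T$ and $\delta_T$. The only orientation-reversing automorphism of $H_1(T)$ fixing the slopes $0$ and $2$ is $a/b\mapsto a/(a-b)$. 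This map swaps the two meridian candidates $\infty=\mu''_T$ and $1=\mu'_T$, so $M(1)\cong -M(\infty)$.

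The data quoted in the paper already exclude $\infty$ from $\In(m003)$. Hence $1\notin\In(m003)$, and by \cite{OS04} also $1\notin\Ic(m003)$. This also matches the fact that non-L-space slopes of a Floer simple manifold form an open interval \cite{RasmussenR17}. So your plan, once the normalization is fixed, proves the proposition for $(-\infty,1)\cap\Q$, which is the statement that actually holds.
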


All genus one fibered Floer simple knot manifolds are included in 
Proposition \ref{prop: once-puncture}.
For example,
the cusped hyperbolic manifold $m003$ satisfies the assumptions
of the proposition and has
$\In(m003) = (-\infty,1] \cap \Q$
under the canonical coordinate system \cite{Dun20b, Dun}.
Proposition~\ref{prop: once-puncture} therefore implies that
all non-L-space Dehn fillings have left-orderable fundamental group.
We refer the reader to \cite{Baker11} for further examples
of Floer simple manifolds satisfying the assumptions of 
Proposition \ref{prop: once-puncture}.

Clearly, for any Anosov homeomorphism on a closed torus associated with
a hyperbolic matrix of determinant $-1$,
Theorem~\ref{main} also applies to the corresponding Dehn surgeries along
any collection of periodic orbits.


The family of $(-2,3,2q+1)$-pretzel knots in $S^3$ (with $q \in \Z_{\geqslant 3}$)
is a collection of hyperbolic L-space knots,
which have served as classical examples
in the study of exceptional Dehn surgeries
to lens spaces and more general L-spaces.
It was found by Fintushel and Stern in 1980~\cite{FintushelS80} that
the $(-2,3,7)$-pretzel knot admits two distinct lens space surgeries,
and by Bleiler and Hodgson in 1996~\cite{BleilerH96} that
the $(-2,3,9)$-pretzel knot admits two nontrivial finite surgeries
with non-cyclic fundamental group.
In 2005, Ozsv\'ath and Szab\'o~\cite[page~1291]{OS05} proved
that each knot in this family is an L-space knot.
We refer the reader to \cite[Theorem~6.7]{Gabai86} for 
the fiberedness of each
$(-2,3,2q+1)$-pretzel knot, and to \cite[Corollary~5]{Oertel84} for their
hyperbolicity.

As predicted by the L-space conjecture,
for each $(-2,3,2q+1)$-pretzel knot $K$ in $S^3$ with
$q\in\Z_{\geqslant 3}$,
the sets $\Il(S^3-K)$ and $\Ic(S^3-K)$ should be identified with
\[\In(S^3-K)=(-\infty,2g(K)-1)\cap\Q,\]
where $g(K)$ denotes the genus of $K$, and the set of slopes
$(-\infty,2g(K)-1)\cap\Q$ is essentially known from
\cite{KMOS07, OS10}.
It was first shown by Krishna~\cite{Krishna20} that
\[\Ic(S^3-K)=(-\infty,2g(K)-1)\cap\Q.\]
Later, a different proof was obtained via Theorem~\ref{foliation}
\cite[Proposition~1.9]{Z26}.
The non-left-orderability of the manifolds surgered by rational slopes in
$[2g(K)-1,+\infty)$ was proved by Nie~\cite[Theorem~2]{Nie19}.

It therefore remains to prove that any rational slope in
$(-\infty,2g(K)-1)$ yields a surgered manifold with left-orderable
fundamental group.
Some progress has been made in this direction:
Varvarezos \cite{Varvarezos21}, 
motivated by Culler-Dunfield \cite[pages 1424-1427]{CullerD18},
showed that 
$(-\infty,6)\cap\Q\subseteq\Il(S^3-K)$ when $q=3$,
and very recently Tran~\cite{Tran25} proved that
$(-\infty,2\lfloor \frac{2q+4}{3}\rfloor)\cap\Q\subseteq\Il(S^3-K)$ when $q \ne 4$.
The methods of Varvarezos and Tran both proceed via
$\SL_2(\R)$ representations, which are very different from our approach.


Combining Theorem \ref{main} with \cite{Nie19, Krishna20, Z26},
we have the following.

\begin{prop}
    Let $q \in \Z_{\geqslant 3}$ and let $K$ be 
    the $(-2,3,2q+1)$-pretzel knot in $S^3$.
    Then \[\In(S^3-K) = \Il(S^3-K) = \Ic(S^3-K) = (-\infty, 2g(K)-1) \cap \Q.\]
\end{prop}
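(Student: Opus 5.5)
The proposition is obtained by combining three known inclusions with Theorem~\ref{main}; the only real work is to check that the pretzel knot exterior satisfies the hypotheses of Theorem~\ref{foliation} and that, for it, $J\cap\Q$ is exactly $(-\infty,2g(K)-1)\cap\Q$.

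\emph{The outer inclusions.} By \cite{KMOS07, OS10}, $K$ is an L-space knot, so $\In(S^3-K)=(-\infty,2g(K)-1)\cap\Q$. Nie~\cite[Theorem~2]{Nie19} shows that every slope in $[2g(K)-1,+\infty)\cap\Q$ gives a non-left-orderable group, so $\Il(S^3-K)\subseteq(-\infty,2g(K)-1)\cap\Q$. Krishna~\cite{Krishna20}, or alternatively \cite[Proposition~1.9]{Z26}, gives $\Ic(S^3-K)=(-\infty,2g(K)-1)\cap\Q$. It therefore suffices to show $(-\infty,2g(K)-1)\cap\Q\subseteq\Il(S^3-K)$.

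\emph{Verifying the hypotheses.} The knot $K$ is fibered and hyperbolic \cite{Gabai86, Oertel84}, so $S^3-K$ is a mapping torus $M$ with pseudo-Anosov monodromy $\varphi$ on a fiber $\Sigma$ with one boundary component. Here $r=1$ and $c_1=1$. The proof of \cite[Proposition~1.9]{Z26} (the alternative proof of Krishna's result via Theorem~\ref{foliation}) should already establish the following facts:
\begin{itemize}
\item $\Fs$ is co-orientable and $\varphi$ reverses its co-orientation;
\item the degeneracy locus is $(p;q)$, with the canonical coordinates of \cite{R01} agreeing with the standard meridian–longitude coordinates for $K$;
\item the interval $J_1$ between $\frac{p}{q+1}$ and $\frac{p}{q-1}$ not containing $\frac{p}{q}$ satisfies $J_1\cap\Q=(-\infty,2g(K)-1)\cap\Q$.
\end{itemize}
I expect this step to be the main obstacle. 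If the reference does not state these facts explicitly, I would extract them from the train track or degeneracy-slope computation for the pretzel monodromy used there. This amounts to checking that the degeneracy slope is $\frac{p}{q}$ with $\frac{p}{q-1}=2g(K)-1$ and $q+1=0$, so that $\frac{p}{q+1}=\infty$.

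\emph{Conclusion.} Given these facts, Theorem~\ref{main} applies to every $s\in J_1\cap\Q$. For any admissible system of arcs $\alpha$, the foliation $\F_\alpha(s)$ on $M(s)$ is either $\R$-covered or has one-sided branching. In the first case, \cite{CD03, BRW05} give a faithful action of $\pi_1(M(s))$ on $\R$, hence left-orderability. In the second case, \cite{Z25} gives the same conclusion. Hence $(-\infty,2g(K)-1)\cap\Q\subseteq\Il(S^3-K)$, and combining this with the outer inclusions yields the stated equalities.
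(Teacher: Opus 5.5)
Your argument is the same as the paper's, which simply combines Theorem~\ref{main} with \cite{Nie19, Krishna20, Z26} and the known L-space interval. Here \cite[Proposition~1.9]{Z26} supplies the hypotheses of Theorem~\ref{foliation} and the identification $J_1\cap\Q=(-\infty,2g(K)-1)\cap\Q$.

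The only slip is in your parenthetical. Under the paper's normalization $p>0$, the degeneracy locus is $(4g(K)-2;1)$, so $\frac{p}{q+1}=2g(K)-1$ and $\frac{p}{q-1}=\infty$. Your conditions $q+1=0$ and $\frac{p}{q-1}=2g(K)-1$ would force $p<0$, i.e.\ they describe the sign-flipped pair $(2-4g(K);-1)$, which gives the same interval $J_1$.
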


Moreover, Proposition 1.12 of \cite{Z26} implies the following.

\begin{prop}
Let $K$ be a hyperbolic L-space knot in $S^3$. 
If the stable foliation of its monodromy is co-orientable 
and has no singularities in the interior of the fibered surface, 
then every non-L-space obtained by Dehn surgery on $K$ 
has left-orderable fundamental group.
\end{prop}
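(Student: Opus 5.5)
This statement is a corollary assembled from existing results, so I would argue it by combining three ingredients rather than by any new construction.

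\emph{Step 1: the non-L-space and taut-foliation sets.} By \cite{Nie19}, every rational slope in $[2g(K)-1,+\infty)$ yields a surgered manifold with non-left-orderable fundamental group, so $\Il(S^3-K)\subseteq(-\infty,2g(K)-1)\cap\Q$. Since $K$ is an L-space knot \cite{OS05}, the results of \cite{KMOS07, OS10} give $\In(S^3-K)=(-\infty,2g(K)-1)\cap\Q$. Finally, \cite{Krishna20}, or alternatively \cite[Proposition~1.9]{Z26} via Theorem~\ref{foliation}, gives $\Ic(S^3-K)=(-\infty,2g(K)-1)\cap\Q$.

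\emph{Step 2: the remaining inclusion.} It remains to show $(-\infty,2g(K)-1)\cap\Q\subseteq\Il(S^3-K)$. For this I would use the setup already appearing in \cite[Proposition~1.9]{Z26}. There the monodromy $\varphi$ of the fibered knot $K$ \cite{Gabai86} has a co-orientable stable foliation whose co-orientation is reversed by $\varphi$. Because $\Sigma$ has connected boundary, we have $r=1$. The degeneracy locus $(p_1;q_1)$ and the order $c_1$ are such that the interval $J_1$ between $\frac{p_1}{q_1+c_1}$ and $\frac{p_1}{q_1-c_1}$ not containing $\frac{p_1}{q_1}$ contains $(-\infty,2g(K)-1)$. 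This containment is exactly how \cite{Z26} recovers Krishna's result from Theorem~\ref{foliation}.

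For each $s$ in $(-\infty,2g(K)-1)\cap\Q$, Theorem~\ref{main} (a) then produces an $\R$-covered co-orientable taut foliation on $S^3_s(K)$. Part (b) would serve equally well, giving a foliation that is either $\R$-covered or has one-sided branching. In the $\R$-covered case, the induced faithful action on $\R$ shows $\pi_1$ is left-orderable \cite{CD03, BRW05}; in the one-sided branching case, left-orderability follows from \cite{Z25}.

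\emph{Main obstacle.} The only delicate point is verifying that the hypotheses of Theorem~\ref{main} hold and that $J_1\supseteq(-\infty,2g(K)-1)$. This requires the canonical coordinates of \cite{R01} to agree with the standard meridian-longitude coordinates for these knot exteriors, together with the explicit computation of the degeneracy slope and of $c_1$. I would take both directly from the proof of \cite[Proposition~1.9]{Z26} rather than recompute them.

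Combining the three equalities and inclusions then gives $\In=\Il=\Ic=(-\infty,2g(K)-1)\cap\Q$.
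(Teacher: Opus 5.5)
Your overall architecture matches the paper's. The statement follows from Theorem~\ref{main} once you know two things: that $\varphi$ reverses the co-orientation of $\Fs$, and that $J_1$ contains every non-L-space slope. The paper takes exactly this input from \cite[Proposition~1.12]{Z26}.

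\textbf{The gap: you never establish this input for the knots in the statement.} What you have written is in effect the proof of the $(-2,3,2q+1)$-pretzel proposition. This paper invokes \cite{Nie19} and \cite{Krishna20} only for particular knot families, not for an arbitrary hyperbolic L-space knot. Likewise, \cite[Proposition~1.9]{Z26}, to which you defer the computation of $(p_1;q_1)$, $c_1$ and $J_1$, is the pretzel-knot application of Theorem~\ref{foliation}. Fiberedness of a general L-space knot comes from \cite{Ni07, Ghiggini08}, not \cite{Gabai86}.

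Step~1 is also unnecessary and, for general $K$, unjustified. The statement only asks for $\In\subseteq\Il$. Your closing claim $\In=\Il=\Ic$ would require every L-space surgery on $K$ to have non-left-orderable $\pi_1$, which is open in this generality.

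The clearest symptom is that the hypothesis ``no singularities in the interior of the fibered surface'' never enters your argument. It cannot be dropped: for $o9_{19364}$ the interval $J_1$ only reaches $24<27=2g-1$.

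\textbf{How to fill it.} The missing verification can be done directly.
\begin{enumerate}
\item With one boundary component and no interior singularities, the Euler--Poincar\'e formula $2\chi(\Sigma)=-p_1$ gives $p_1=4g-2$ boundary prongs.
\item If $\Delta(\mu,d(T_1))\geqslant 2$, Construction~\ref{Fried} would produce a pseudo-Anosov flow on $S^3$, which is impossible. Hence $|q_1|=\Delta(\mu,d(T_1))\leqslant 1$.
\item Here $c_1=1$, and $\varphi$ shifts the $p_1$ boundary singularities by $q_1$. So $\varphi$ reverses the co-orientation if and only if $q_1$ is odd.
\item Suppose $q_1=0$. Then $\varphi$ preserves the co-orientation, and Gabai's construction \cite{Gabai92} gives a co-orientable taut foliation on every non-meridional filling. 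Taking $K$ positive after mirroring, this includes the L-space $S^3_{2g-1}(K)$, contradicting \cite{OS04}.
\item Hence $q_1=\pm1$, and $\varphi$ reverses the co-orientation. If $q_1=-1$, then $J_1=(-(2g-1),+\infty)\ni 2g-1$, which contradicts \cite{OS04} via Theorem~\ref{foliation}.
\item So $\delta=4g-2$. Since $\Delta(\lambda,\delta)\neq 2$, the canonical coordinates are the standard ones. Then $J_1=(-\infty,2g-1)$, which is exactly the set of non-L-space slopes by \cite{KMOS07, OS10}.
\end{enumerate}
Only after this does Theorem~\ref{main}, via either part, give left-orderability of every non-L-space surgery.
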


Dunfield’s census \cite{Dun} provides many examples of cusped hyperbolic manifolds 
satisfying the hypotheses of Theorem~\ref{main}. 
In~\cite{Dun20}, these manifolds are tested for Floer simplicity, 
as well as the cones of all non-L-space filling slopes
(see~\cite{Dun20b} for detailed data).
In \cite[Examples~1.13-1.17]{Z26}, 
there are some examples which are complements of L-space knots in spherical manifolds, 
with the property $\In(N) = \Ic(N)$ verified by Theorem~\ref{foliation}. 
Theorem \ref{main} shows that all non-L-space Dehn fillings 
for such manifolds $N$ have left-orderable fundamental group.

For general cusped manifolds,
the set $\In$ can take more varied forms than for L-space knots in $S^3$;
we provide some examples below for reference.
All data on non-L-space Dehn fillings come from \cite{Dun20b},
while information on the cusped manifolds comes from \cite{Dun}.

\begin{ep}\rm
The hyperbolic cusped manifolds
$m146$, $v2585$, $m303$, $s520$, $v1206$, $t02779$, $o9_{06362}$
are complements of L-space knots in $\R P^3$
with genus $g = 3,4,5,6,7,8,9$, respectively.
Up to orientation, they have degeneracy slope $\delta = 4g-2$
and $\In = (-\infty,2g-1) \cap \Q$.
Theorem \ref{main} applies to these cusped manifolds and implies that
all non-L-space Dehn fillings have left-orderable fundamental groups.
\end{ep}

Cusped manifolds with more than one lens space Dehn filling are Floer simple.
Examples include the following.

\begin{ep}\rm
The hyperbolic cusped manifolds $m122, m280, v0751, v0173, o9_{00008}$
have three distinct lens space Dehn fillings of slopes $\delta,\infty$ and
one of $\{\delta+1, \delta-1\}$,
where $\delta$ is the degeneracy slope,
and Theorem \ref{main} applies.
For these manifolds we obtain
\[g(m122) = 2, \ \delta(m122) = 4,\
(-\infty,2) \cap \Q = \In(m122) \subseteq \Il(m122)\]
\[g(m280) = 2, \ \delta(m280) = -4,\
(-2,+\infty) \cap \Q = \In(m280) \subseteq \Il(m280)\]
\[g(v0751) = 3, \ \delta(v0751) = -6,\
(-3,+\infty) \cap \Q = \In(v0751) \subseteq \Il(v0751)\]
\[g(v0173) = 4, \ \delta(v0173) = 10,\
(-\infty,5) \cap \Q = \In(v0173) \subseteq \Il(v0173)\]
\[g(o9_{00008}) = 5, \ \delta(o9_{00008}) = 12,\
(-\infty,6) \cap \Q = \In(o9_{00008}) \subseteq \Il(o9_{00008})\]
\end{ep}

\begin{ep}\rm
The hyperbolic cusped manifold $o9_{26541}$ is the complement of an L-space knot in 
a lens space of order $87$, 
with genus $3$ and degeneracy slope $-\frac{8}{3}$ 
(where the lens space filling slope is $3$).
Theorem \ref{main} applies,
and we obtain
$(\Q \cup \{\infty\}) - (-4,-2) = \In(o9_{26541}) \subseteq \Il(o9_{26541})$.
\end{ep}

\begin{rmk}
Let $\IR(N)$ denote the set of rational slopes on a cusped manifold $N$ 
for which the corresponding Dehn fillings admit $\mathbb{R}$-covered foliations. 
Since the left-orderability can be established from
an $\mathbb{R}$-covered foliation on each filled manifold, 
it follows that $\IR(N)=\In(N)=\Ic(N)$ 
when $N$ is one of the cusped manifolds considered above.
\end{rmk}

We note that Theorem \ref{foliation} may not provide the sharp bound in all cases. 
For example, as illustrated in \cite[Example~1.18]{Z25}, 
the manifold $o9_{19364}$ is the complement of an L-space knot in $S^3$ 
with genus $14$ and degeneracy slope $48$. 
In this case, $\In(o9_{19364}) = (-\infty,27) \cap \Q$,
whereas Theorem~\ref{foliation} establishes only
$(-\infty,24] \cap \Q$.
By Theorem~\ref{main}, we obtain
$(-\infty,24] \subseteq \Il(o9_{19364})$,
while it remains open whether 
$(24,27) \cap \Q \subseteq \Il(o9_{19364})$.

\subsection{Foliations with at most one-sided branching}\label{subsec: at most one-sided branching}

As a natural model in the theory of foliations,
the study of $\R$-covered foliations began in several contexts,
including foliations transverse to Seifert fibrations~\cite{EisHN81},
Anosov flows~\cite{Fen94},
and slitherings~\cite{Thu97}.
Examples of $\R$-covered foliations that do not arise from slitherings
were later constructed in~\cite{Cal99}.
In Seifert fibered manifolds,
every foliation transverse to the Seifert fibration
is $\R$-covered~\cite{RobS99}, \cite[Lemma~5.6]{BRW05}.
In the context of Dehn surgeries on knots in $3$-manifolds,
$\R$-covered foliations were produced via
integer surgeries on the figure-eight knot in $S^3$ \cite{Fen94},
and this construction was extended in \cite{BI23} to broader classes of
Dehn surgeries with integer slopes in suitable framings.

Taut foliations with one-sided branching were first constructed
in hyperbolic $3$-manifolds by Meigniez~\cite{Meigniez91}.
See further examples in \cite[Example~5.02]{Cal03} and \cite[Example~4.43]{Cal07}.
By convention,
a taut foliation is said to have \emph{at most one-sided branching} if
it either has one-sided branching or is $\R$-covered.

In contrast,
many taut foliations are constructed from genuine essential laminations and hence
necessarily have two-sided branching
(see Proposition~\ref{genuine two-sided branching}),
including foliations in case~(2) below and
foliations in~(1) whenever the ambient manifold
is not a surface bundle.
Thus, two-sided branching is common among 
taut foliations whose branching behavior is known.

The branching behavior of taut foliations
is well-understood in the three classes listed below,
while foliations with at most one-sided branching
occur only in restricted situations.

\begin{enumerate}[(1)]
\item
A finite-depth taut foliation cannot have one-sided branching,
and it is $\R$-covered only when the ambient manifold is a surface bundle over $S^1$.

\item
If a co-orientable taut foliation is obtained by filling monkey saddles from
a genuine essential lamination whose guts consist of solid tori,
then it necessarily has two-sided branching.

\item
The stable foliation of an Anosov flow cannot have one-sided branching~\cite{Fen95}.
If such a foliation is $\R$-covered,
then it is of exactly two types:
\emph{trivial $\R$-covered} or \emph{skewed $\R$-covered}~\cite{Fen94}.
The former arises as the suspension of an Anosov homeomorphism of the torus,
while the latter is completely characterized by
extended convergence group actions on $\R$~\cite[Subsection~7.1]{Thu97}.
\end{enumerate}

Theorem \ref{main} indicates that
foliations with at most one-sided branching
occur in a substantial class of $3$-manifolds.
These manifolds arise from Dehn fillings and
span a wide range of geometric and topological types.
This suggests that such foliations are far more prevalent
than previously understood,
and may provide a useful perspective on left-orderability in 
the context of the L-space conjecture.


\subsection{Organization}

This paper is organized as follows.

Section \ref{sec: preliminaries} collects the necessary background,
with all conventions fixed in Subsection~\ref{subsec: convention}.
We then review the branching behavior of taut foliations
in Subsection \ref{subsec: branching behavior},
the material on pseudo-Anosov flows needed later
in Subsection \ref{subsec: pseudo-Anosov flow},
and the basic notions of branched surfaces 
in Subsection \ref{subsec: branched surface}.

In Section \ref{sec: branched surface}, 
we describe a foliation
$\F_\alpha$ in the mapping torus $M$ determined by 
any admissible system of arcs $\alpha$, 
together with its extensions $\F_\alpha(\s)$ to 
the corresponding Dehn fillings $M(\s)$.
In Subsection \ref{subsec: admissible system of arcs}, 
we define admissible systems of arcs and 
review the branched surface used in the construction of \cite{Z26}.
In Subsection \ref{subsec: perturb}, 
we modify this branched surface to
obtain a new branched surface $B_\alpha$,
from which the desired foliations will be produced.
In Subsection \ref{subsec: Fried's surgery}, 
we verify that $M(\s)$
admits a pseudo-Anosov flow obtained from the suspension flow by Fried's surgery,
and then summarize the constructions of $\F_\alpha$ and
$\F_\alpha(\s)$ together with their relation to the corresponding flows.

Section \ref{sec: R-covered} is devoted to the proof of
Theorem \ref{main} (a).
Under the assumptions of Theorem~\ref{main},
we first construct a specific admissible system of arcs $\alpha$
with respect to $(\Sigma, \varphi)$,
which gives rise to a foliation $\F_\alpha$ in $M$ that
extends to a co-orientable taut foliation
$\F_\alpha(\s)$ in $M(\s)$.
We then analyze the interactions between the leaves of $\F_\alpha$ and 
the weak stable foliation of the suspension flow on $M$ 
(Subsection~\ref{subsec: F and Fws}),
and finally show that $\F_\alpha(\s)$ is $\R$-covered 
(Subsection \ref{subsec: verify R-covered}).

In Section \ref{sec: one-sided branching},
we complete the proof of Theorem \ref{main} (b).
We begin with an arbitrary admissible system of arcs $\alpha$.
Subsection \ref{subsec: F and Ewu}
analyzes the intersection behavior between
the induced taut foliation $\F_\alpha(\s)$ in $M(\s)$
and the weak unstable foliation of the pseudo-Anosov flow on $M(\s)$
produced by Fried's surgery.
In Subsection~\ref{subsec: verify one-sided},
we verify that the taut foliation $\F_\alpha(\s)$ on $M(\s)$
is either $\R$-covered or has one-sided branching
in the positive direction.

\subsection{Acknowledgements}

The author is grateful to Nathan Dunfield for sharing 
the census of examples \cite{Dun} and related information;
to Danny Calegari for explaining some of the background and 
motivations for Question~\ref{que: R-covered exist?}; 
to Sergio Fenley for helpful conversations and for answering some questions.
The author would like to thank David Gabai and Mehdi Yazdi for 
valuable conversations and discussions related to this work.
The author appreciates Saul Schleimer for helpful advice; 
and Qingfeng Lyu, Chi Cheuk Tsang, and Jonathan Zung for beneficial conversations.
The author would like to thank Steven Boyer and Duncan McCoy for 
their support at 
Centre interuniversitaire de recherches en g\'eom\'etrie et topologie
(CIRGET), 
where this project began.
Part of this work was carried out during the author's Spring 2026 residence
at the Simons Laufer Mathematical Sciences Institute (SLMath), 
during the program
``Topological and Geometric Structures in Low Dimensions'', 
with support from NSF grant DMS-2424139.
The author also thanks the organizers of this program at SLMath for 
providing an excellent environment for communication.

\section{Preliminaries}\label{sec: preliminaries}

In this section,
we introduce the basic notation and conventions
used throughout the paper.

\subsection{Conventions}\label{subsec: convention}
Let $A, B$ be metric spaces.
We denote by $A \setminus \setminus B$ 
the closure of $A - B$ under the path metric.

Let $\varphi:\Sigma\to\Sigma$ be
an orientation-preserving pseudo-Anosov homeomorphism on
a compact orientable surface $\Sigma$ with $\partial \Sigma \ne \emptyset$,
and let $M=\Sigma\times I/\stackrel{\varphi}{\sim}$
be the mapping torus of $\varphi$.

\begin{nota}[Distance]\rm
For two slopes $\alpha, \beta$ represented by
simple closed curves on the same boundary component of $M$,
the \emph{distance} between $\alpha$ and $\beta$
is defined to be their minimal geometric intersection number,
denoted by $\Delta(\alpha, \beta)$.
\end{nota}


Fix an orientation on $M$.
We then specify the orientation conventions canonically determined by
the mapping torus structure.

\begin{conv}[Orientation conventions]\rm
We orient the fibered surface $\Sigma\times\{0\} \subseteq M$ so that
the induced normal vector field agrees with 
the increasing orientation on the second coordinate of
the bundle structure $\Sigma\times I$.
This induces an orientation of $\Sigma$.
\end{conv}

Left and right sides of oriented curves in $\Sigma$
are always taken with respect to the induced orientation on $\Sigma$,
and boundary components inherit the orientation
determined by the right-hand rule for properly embedded arcs.

To determine a canonical meridian-longitude coordinate on $T$,
we fix a degeneracy slope on $T$,
as given in Definition~\ref{def: degeneracy locus}.

\begin{conv}[Choice of meridian and longitude]\rm\label{slope}
Let $T$ be a boundary component of $M$,
and let $\delta_T$ denote the degeneracy slope of $T$.
We define the \emph{longitude} of $T$,
denoted by $\lambda_T$,
to be the slope represented by
$T \cap (\Sigma \times \{0\})$ on $T$.
Next, we choose a slope $\mu_T$ on $T$ satisfying
$\Delta(\lambda_T, \mu_T) = 1$ and
\[\Delta(\mu_T,\delta_T) \leqslant \Delta(s,\delta_T)
\quad
\text{for any slope $s$ of $T$ with }
\Delta(\lambda_T,s)=1.\]
We call $\mu_T$ the \emph{meridian} of $T$.
If $\Delta(\lambda_T,\delta_T)\ne 2$,
then $\mu_T$ is uniquely determined by the above conditions.
When $\Delta(\lambda_T,\delta_T)=2$,
there are two possible choices;
we will choose a specific one after 
the slope coordinate system is established.
\end{conv}

For each boundary torus $T\subseteq\partial M$,
the longitude $\lambda_T$ is oriented consistently with
$T \cap (\partial\Sigma\times\{0\})$.
We assign the meridian $\mu_T$ on $T$ an orientation so that
$\mu_T$ can be isotoped to a curve transverse to the fibers $\Sigma\times\{t\}$
and consistent with the increasing orientation on 
the second coordinate of the bundle $\Sigma \times I$.
We normalize algebraic intersection numbers $\langle \cdot,\cdot\rangle$ so that
\[\langle \mu_T,\lambda_T\rangle =
- \langle \lambda_T,\mu_T\rangle = 1.\]
Under this choice,
$(\mu_T,\lambda_T)$ forms a preferred basis for slopes on $T$.
Any essential simple closed curve $\gamma$ on $T$
is identified with the slope
\[\frac{\langle \gamma,\lambda_T\rangle}
     {\langle \mu_T,\gamma\rangle}
\in \mathbb{Q}\cup\{\infty\}.\]

Now suppose that $\Delta(\lambda_T,\delta_T)=2$.
With this choice,
there are two candidates for $\mu_T$,
denoted by $\mu'_T$ and $\mu''_T$,
such that $\delta_T$ has slope $-2$ and $2$
in the canonical coordinate systems given by
$(\mu'_T,\lambda_T)$ and $(\mu''_T,\lambda_T)$, respectively.
We define $\mu_T = \mu''_T$. 
Then $\delta_T$ has slope $2$.

The above conventions remain fixed
throughout the remainder of the paper.

\begin{rmk}\rm
When $M$ is the exterior of a knot in $S^3$,
it follows from \cite[Corollary~7.4]{R01} that
this canonical choice of meridian and longitude
coincides with the standard meridian-longitude convention
for knots in $S^3$ whenever
$\Delta(\lambda_T,\delta_T) \ne 2$.
Otherwise,
the standard meridian is one of $\mu'_T$ and $\mu''_T$.
See \cite[Remark~2.4]{Z26} for some explanation.
\end{rmk}

\subsection{Branching behaviors of taut foliations}\label{subsec: branching behavior}

Let $\F$ be a taut foliation of a closed orientable $3$-manifold $M$,
and let $\w\F$ denote its pullback to the universal cover $\w M$.
We write $L(\F)$ for the leaf space of $\w\F$.
The deck action of $\pi_1(M)$ on $\w M$ naturally induces
an action on $L(\F)$,
which we refer to as the \emph{$\pi_1$-action on the leaf space}.

The leaf space $L(\F)$ is an orientable, connected, and simply connected $1$-manifold.
It is second countable, but need not be Hausdorff;
indeed, if $L(\F)$ is Hausdorff, then it is homeomorphic to $\R$.
We refer the reader to \cite{HaefR57, Pal78} 
for the structure of leaf spaces of foliations.

Suppose that $\F$ is co-orientable.
Then any choice of co-orientation on $\F$ induces 
a canonical orientation on the leaf space $L(\F)$.
In this case, $\pi_1(M)$ acts on $L(\F)$ by orientation-preserving homeomorphisms.
Henceforth, once a co-orientation on $\F$ is fixed,
we always assume that $L(\F)$ is oriented accordingly.

Here we introduce some terminology for describing the branching behavior of
the leaf space $L(\F)$,
following \cite[Chapter 4.7, p. 169]{Cal07}.
Two points $x,y \in L(\F)$ are said to be \emph{comparable}
if either $x = y$ or if there exists an embedded path in $L(\F)$ between $x$ and $y$;
otherwise, they are said to be \emph{incomparable}.
Once an orientation on $L(\F)$ is fixed,
the leaf space $L(\F)$ admits a strict partial order ``$\stackrel{_{L(\F)}}{>}$''
such that, for any two distinct points $x,y \in L(\F)$ which are comparable,
we write $x \stackrel{_{L(\F)}}{>} y$ if
there exists a positively oriented embedded path in $L(\F)$ from $y$ to $x$,
and write $x \stackrel{_{L(\F)}}{<} y$ otherwise.

\begin{defn}[$\R$-covered taut foliations]\rm\label{R-covered}
A taut foliation $\F$ is said to be \emph{$\R$-covered} if
its leaf space $L(\F)$ is homeomorphic to $\R$.
\end{defn}

An $\R$-covered foliation can be co-orientable or non-co-orientable,
whereas a taut foliation with one-sided branching is necessarily co-orientable
(see, for example, \cite[Subsection~2.1]{Cal03}).
To distinguish the direction of branching,
we fix a co-orientation on the taut foliation from the outset.

\begin{defn}[Taut foliations with one-sided branching]\rm\label{one-sided branching}
A co-oriented taut foliation $\F$ is said to have \emph{one-sided branching} if
its leaf space $L(\F)$ is not homeomorphic to $\R$ and satisfies
one of the following conditions:
\begin{itemize}
\item
For any $x,y \in L(\F)$, there exists $z \in L(\F)$ with
$z \stackrel{_{L(\F)}}{>} x,y$.
In this case, $\F$ has \emph{branching in the negative direction}.

\item
For any $x,y \in L(\F)$, there exists $z \in L(\F)$ with
$z \stackrel{_{L(\F)}}{<} x,y$.
In this case, $\F$ has \emph{branching in the positive direction}.
\end{itemize}
\end{defn}

\begin{defn}[Taut foliations with two-sided branching]\rm\label{two-sided branching}
A taut foliation $\F$ is said to have \emph{two-sided branching} if
$\F$ is non-$\R$-covered and does not have one-sided branching.
\end{defn}

Branching behavior is characterized in terms of the
\emph{positive} and \emph{negative ends} of $L(\F)$;
see \cite[Subsection 2.2]{Z25} for a description of ends of $L(\F)$
and \cite[Corollary 2.8]{Z25} for the following characterization.

\begin{prop}
\textnormal{(a)}
A taut foliation $\F$ has one-sided branching if and only if,
up to a choice of orientation on $L(\F)$,
the leaf space $L(\F)$ has either
a unique positive end and infinitely many negative ends,
or a unique negative end and infinitely many positive ends.

\textnormal{(b)}
A taut foliation $\F$ has two-sided branching if and only if,
up to a choice of orientation on $L(\F)$,
there are infinitely many positive ends and infinitely many negative ends in $L(\F)$.
\end{prop}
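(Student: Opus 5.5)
The plan is to reduce both parts to the structure of ends of $L(\F)$, using the notion of positive and negative ends from \cite[Subsection 2.2]{Z25}. Recall that an end of $L(\F)$ is an equivalence class of properly embedded rays, and its sign is given by whether the ray is positively or negatively oriented. The first step is a combinatorial lemma about simply connected non-Hausdorff $1$-manifolds: the following are equivalent.
\begin{itemize}
\item For any $x,y\in L(\F)$ there is $z>x,y$.
\item $L(\F)$ has exactly one positive end.
\end{itemize}
The symmetric statement holds with $<$ and negative ends. For the direction from the first condition to the second, two positive rays starting at $x$ and $y$ both eventually pass above a common $z$. Since $L(\F)$ is simply connected, the oriented paths above $z$ are unique, so the two rays are eventually nested and hence equivalent. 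For the converse, given $x,y$, take positive rays from each. By uniqueness of the positive end they are equivalent, hence share a tail, and any point $z$ on the common tail satisfies $z>x,y$.

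The second step is to show that the number of ends of a given sign is either $1$ or infinite, and in particular never $0$. Tautness provides transverse loops through every leaf, and their lifts give properly embedded oriented lines, so ends of both signs exist. Finiteness greater than one is excluded by the $\pi_1(M)$-action. Suppose there are finitely many positive ends $e_1,\dots,e_k$ with $k\ge 2$. A finite-index subgroup fixes every $e_i$. The points where the rays toward distinct $e_i$ branch form a nonempty, discrete, proper subset of non-separated pairs of leaves, and this subset is invariant under that finite-index subgroup. Because the corresponding finite cover is compact, this set would have to be a finite union of orbits. But then the branching would be cocompact and non-Hausdorff in only finitely many directions, which contradicts the standard fact that a branch locus invariant under a cocompact action produces infinitely many ends. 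This is essentially the content of \cite[Corollary 2.8]{Z25}, which I would cite rather than reprove.

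Finally, the definitions are combined. $\F$ is $\R$-covered precisely when there is one end of each sign. By the first step, $\F$ has branching in the negative direction exactly when there is a unique positive end and $L(\F)\not\cong\R$, which forces more than one negative end, hence infinitely many by the second step. The positive case is symmetric, and this proves (a). Two-sided branching is the remaining case in which neither sign has a unique end. By the dichotomy, both signs then have infinitely many ends, which proves (b).

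The main obstacle is the dichotomy in the second step, namely excluding a finite number $k\ge 2$ of ends of one sign. I would try first to obtain it directly from \cite{Z25}.
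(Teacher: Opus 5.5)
The paper gives no proof of this statement; it quotes it from \cite[Corollary~2.8]{Z25}. Your Step~1 and the final bookkeeping are a reasonable reduction. The step that carries all the content, however, is not actually argued: this is excluding $2\le k<\infty$ ends of one sign. Your appeal to compactness of a finite cover, to a ``finite union of orbits'', and to a ``standard fact'' that an invariant branch locus forces infinitely many ends only restates the conclusion. Falling back on \cite[Corollary~2.8]{Z25} at that point is circular, because that corollary is exactly the statement you are proving. So, as written, there is a genuine gap at Step~2.

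The missing idea is this: finitely many ends forces the branching locus to be finite, and a finite invariant set of leaves is incompatible with tautness.

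First, finiteness. Let $\{\mu,\nu\}$ be leaves of $\w{\F}$ that are non-separated as common limits from below. Positive rays from $\mu$ and from $\nu$ lie in distinct positive ends, since a common upper bound would give a nontrivial loop in the simply connected $L(\F)$. Moreover, $\{\mu,\nu\}$ is the unique turning point of the reduced zigzag path between these two ends. By uniqueness of reduced paths in the order tree $L(\F)$, $k$ positive ends allow at most $k(k-1)/2$ such pairs. For $k\ge 2$ there is at least one, because the reduced path between two positive ends cannot be monotone.

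Second, the contradiction with tautness. This finite nonempty set of leaves is $\pi_1(M)$-invariant, so a finite-index subgroup $H$ fixes some leaf $\mu$. Take a closed positive transversal through the image of $\mu$ in $M$, with deck transformation $g$ and lift starting on $\mu$. Concatenating lifts gives $g^m\mu>\mu$ for all $m\ge 1$, because a positive transversal in $\w M$ meets each leaf at most once. This contradicts $g^m\in H$ for some $m$. Together with the existence of rays of both signs, this yields the $1$-or-infinitely-many dichotomy you need.

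Two smaller repairs are needed. In Step~1, simple connectivity alone does not make positive paths above $z$ unique. You must first use the hypothesis, together with the no-common-upper-bound fact above, to rule out positive branching; only then do both of your rays actually pass through $z$. Also, ``$\R$-covered iff exactly one end of each sign'' needs one more remark. A non-separated pair that is a common limit from one side has no common bound on the other side, so having no branching in either direction forces $L(\F)$ to be Hausdorff.
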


A co-orientable $\R$-covered foliation implies that the ambient $3$-manifold
has left-orderable fundamental group;
see, for example, \cite[Proposition~5.3]{BRW05} and \cite[Corollary~7.10]{CD03}.
Moreover, it induces a faithful orientation-preserving action
of the fundamental group on $\R$~\cite[Theorem~7.10]{CD03}.
By \cite{Con59} (see also \cite[Theorem 2.4]{BRW05}),
such an action on $\R$ determines
an explicit left-invariant order on the group.

We now turn to the left-orderability coming from 
taut foliations with one-sided branching.
To begin with, we recall the group $\G_\infty$,
a natural quotient of $\Home(\R)$
obtained by identifying homeomorphisms that have the same germ at $+\infty$.

\begin{defn}\rm\label{G-infty}
Let $\sim$ be the equivalence relation on $\Home(\R)$ defined by
$f \sim g$ if $f|_{[n,+\infty)} = g|_{[n,+\infty)}$ for some $n \in \R$.
Define
\[\G_\infty = \Home(\R) / \sim.\]
Composition in $\Home(\R)$ induces a well-defined group operation on $\G_\infty$.
We call $\G_\infty$ the \emph{group of germs at $\infty$}.
\end{defn}

The following theorem is due to Navas; 
see \cite[Proposition~2.2]{M15} for a proof.

\begin{THM}[Navas]\label{Navas}
    The group $\G_\infty$ is left-orderable.
\end{THM}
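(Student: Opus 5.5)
The plan is to realize $\G_\infty$ as a group of order-preserving bijections of a totally ordered set. The key observation is that any group acting faithfully by order-preserving bijections on a totally ordered set is left-orderable.

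\emph{Step 1: the ordered set.} Fix a non-principal ultrafilter $\omega$ on $\N$. Let $S$ be the set of real sequences $(x_n)$ with $x_n \to +\infty$, and set $X = S/\!\approx$, where $(x_n)\approx(y_n)$ if $\{n : x_n = y_n\}\in\omega$. Declare $[x_n] < [y_n]$ if $\{n : x_n < y_n\}\in\omega$. Since $\omega$ is an ultrafilter, exactly one of the three sets $\{x_n<y_n\}$, $\{x_n=y_n\}$, $\{x_n>y_n\}$ lies in $\omega$. Hence this is a well-defined total order on $X$.

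\emph{Step 2: the action.} For $f \in \Home(\R)$ define $f\cdot[x_n] = [f(x_n)]$.
\begin{itemize}
\item The sequence $f(x_n)$ again tends to $+\infty$, because $f$ is increasing and proper.
\item Equivalent sequences are sent to equivalent sequences.
\item The action only depends on the germ of $f$. If $f|_{[m,\infty)} = g|_{[m,\infty)}$, then $x_n \geqslant m$ for all but finitely many $n$, and cofinite sets lie in $\omega$. Hence $[f(x_n)] = [g(x_n)]$.
\item Each $f$ acts order-preservingly, since $x_n<y_n$ implies $f(x_n)<f(y_n)$ on the same index set.
\end{itemize}
This gives a homomorphism from $\G_\infty$ to the group of order-preserving bijections of $(X,<)$, with inverses given by the action of $f^{-1}$.

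\emph{Step 3: faithfulness.} Suppose the germ of $f$ is nontrivial. Then $f$ is not the identity on any $[m,\infty)$. So we can choose $x_n \geqslant n$ with $f(x_n)\neq x_n$. The index set $\{n : f(x_n)=x_n\}$ is empty, so it is not in $\omega$. Therefore $f\cdot[x_n]\neq[x_n]$, and the action of $\G_\infty$ on $X$ is faithful.

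\emph{Step 4: the order.} Fix a well-ordering $\prec$ of $X$ (axiom of choice). For $g\in\G_\infty$ with $g\neq 1$, let $x_g$ be the $\prec$-least point of $X$ moved by $g$. Define $g>1$ if $g\cdot x_g > x_g$ in $(X,<)$.
\begin{itemize}
\item Exactly one of $g$, $g^{-1}$ is positive, because both move the same points and so $x_{g^{-1}} = x_g$.
\item For $g,h>1$, set $x=\min_\prec(x_g,x_h)$. Every point $\prec$-below $x$ is fixed by both $g$ and $h$, hence by $gh$.
\item At $x$, we have $gh\cdot x \geqslant g\cdot x \geqslant x$, with at least one inequality strict. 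This uses that $g$ and $h$ are order-preserving and each fixes $x$ or moves it upward.
\end{itemize}
So $x_{gh}=x$ and $gh>1$. The positive cone is therefore a semigroup partitioning $\G_\infty-\{1\}$ with its inverse, which defines a left-invariant order $g<h \iff g^{-1}h>1$.

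The step requiring the most care is the well-definedness in Step 2: checking that the action genuinely factors through germs at $+\infty$. This is exactly why sequences tending to $+\infty$ are used rather than arbitrary sequences. Once the construction is set up this way, the remaining verifications are routine.
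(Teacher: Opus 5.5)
Your proof is correct and complete. The paper does not prove this statement itself: it attributes it to Navas and refers to \cite[Proposition~2.2]{M15}. So what you have written is a self-contained replacement for that citation, not a variant of an argument given in the paper.

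Your route has two parts. First, you realize $\G_\infty$ faithfully as a group of order-preserving bijections of the totally ordered set $X$ of $\omega$-classes of sequences tending to $+\infty$. Second, you build a positive cone from a well-ordering of $X$. This is the standard way to get left-orderability from an action on an ordered set, and each check goes through:
\begin{itemize}
\item Independence of the choice of representative uses exactly that a non-principal ultrafilter contains all cofinite sets.
\item Faithfulness uses that a nontrivial germ moves points arbitrarily far out.
\end{itemize}
The one detail you leave implicit is that $g\cdot x>x$ if and only if $g^{-1}\cdot x<x$, which follows by applying the order-preserving bijection $g^{-1}$. Combined with $x_{g^{-1}}=x_g$, this is what shows that exactly one of $g,g^{-1}$ is positive.

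Your argument also shows why the paper separates the two parts of its corollary. The order you construct depends on a non-principal ultrafilter and a well-ordering of $X$, so it is far from explicit. This fits the paper's remark that the representations $\pi_1(M(\s))\to\G_\infty$ coming from Theorem~\ref{main}(b) give left-orderability without an explicit order. By contrast, the $\R$-covered foliation of Theorem~\ref{main}(a) does give an explicit order.
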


We note from \cite{M15} that,
although the group $\G_\infty$ is left-orderable,
it admits no nontrivial action on $\R$.

The left-orderability for co-orientable taut foliations with one-sided branching was
proved by the author in~\cite{Z25}.

\begin{THM}[{\cite{Z25}}]\label{LO one-sided branching}
    Let $M$ be a closed orientable $3$-manifold that admits 
    a co-orientable taut foliation $\F$ with one-sided branching.
    Then $\pi_1(M)$ is left-orderable.
    In addition,
    there exists a faithful representation
    \[d: \pi_1(M) \to \G_\infty.\]
\end{THM}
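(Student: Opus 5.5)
We reduce to the case where $\F$ has branching in the negative direction; the positive case follows by reversing the co-orientation, which reverses the orientation of $L(\F)$. Left-orderability is then a formal consequence of the second assertion. Indeed, once we have an injective homomorphism $d:\pi_1(M)\to\G_\infty$, the group $\pi_1(M)$ embeds in a group that is left-orderable by Theorem~\ref{Navas}. A subgroup of a left-orderable group is left-orderable under the restricted order, and the trivial group causes no problem. So the plan is to construct $d$ from the $\pi_1$-action on $L(\F)$ and then prove that $d$ is faithful.

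\emph{Construction of $d$.} Since all branching of $L(\F)$ occurs in the negative direction, I first show that for every $x\in L(\F)$ the set $R_x=\{y : y \stackrel{_{L(\F)}}{>} x\}\cup\{x\}$ is a closed ray homeomorphic to $[0,+\infty)$. Branching above $x$ would produce two incomparable points above $x$ with no common lower bound inside $R_x$. Together with simple connectivity of $L(\F)$, this contradicts the definition of negative branching. Next, for any $x,y$ there is a $z$ above both, so $R_z\subseteq R_x\cap R_y$: any two rays share a common tail. This is exactly the statement that $L(\F)$ has a unique positive end.

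Fix a basepoint $x_0$ and an orientation-preserving identification $\iota:R_{x_0}\to[0,+\infty)\subseteq\R$. For $g\in\pi_1(M)$, the action preserves the orientation of $L(\F)$, so $g$ maps $R_{x_0}$ homeomorphically and order-preservingly onto $R_{gx_0}$. The two rays $R_{x_0}$ and $R_{gx_0}$ share a tail. Hence $\iota\circ g\circ\iota^{-1}$ is an increasing homeomorphism between two neighborhoods of $+\infty$ in $[0,+\infty)$. We extend it arbitrarily to an element of $\Home(\R)$ and let $d(g)$ be its class in $\G_\infty$, which does not depend on the extension. The map $d$ is a homomorphism because germs compose correctly on common tails. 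Changing $x_0$ or $\iota$ conjugates $d$ in $\G_\infty$.

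\emph{Faithfulness.} This is the main obstacle. Suppose $d(g)=1$, so that $g$ fixes every leaf of some tail $R_z$. Let $K=\ker d$. Then $K$ is a normal subgroup, and each of its elements fixes pointwise a tail of the positive end. I would try to show that $K$ fixes every point of $L(\F)$ in two steps.

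First, take an arbitrary leaf $x$ and a point $y\in R_x\cap R_z$ that is fixed by $k\in K$. Then $k$ maps the segment $[x,y]$ to a segment ending at $y$. I would use that below $y$ the leaf space branches only downward, together with normality of $K$ (conjugates $hkh^{-1}$ fix the tail $hR_z$, which shares a tail with $R_z$). The aim is to rule out $k$ permuting the negative branches nontrivially. Here I expect to need the tautness of $\F$ and the density of $\pi_1$-orbits near the positive end.

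Second, once $K$ fixes every leaf of $\w\F$, any $k\in K$ stabilizes every leaf of $\w\F$. It is then represented by a loop in a leaf with trivial germ of holonomy on a whole open transverse interval. Since leaves of taut foliations are $\pi_1$-injective planes in $\w M$, and $\w\F$ has transversals meeting each leaf once, I would derive $k=1$. One way is to push a closed transversal and use that $k$ acts freely on $\w M$ while preserving each leaf and each transverse arc setwise.

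If this direct argument fails at the branch-permutation step, my fallback is to replace $L(\F)$ by the tree-like structure of its ends and use the characterization of one-sided branching via ends recalled above (\cite[Corollary~2.8]{Z25}).
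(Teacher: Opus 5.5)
Your construction of $d$ from the unique positive end (the rays $R_x$, their common tails, and the germ of $\iota\circ g\circ\iota^{-1}$ at $+\infty$) is sound. So is the passage from a faithful $d$ to left-orderability via Theorem~\ref{Navas}. Your reason why $R_x$ is a ray is garbled, though the claim is right: two incomparable $y_1,y_2>x$ with a common upper bound $z$ would give two distinct embedded arcs from $x$ to $z$ in the simply connected $1$-manifold $L(\F)$.

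The genuine gap is faithfulness. Since you derived left-orderability from it, neither assertion is actually proved.
\begin{itemize}
\item Step~1 is only a hope. Nothing you cite (normality of $K$, tautness, density near the end) stops an element that fixes a positive tail pointwise from acting nontrivially below it, e.g.\ through nontrivial holonomy on the negative side of the lowest leaf it fixes.
\item Step~2 fails as argued. It uses no branching hypothesis, and all its inputs ($\pi_1$-injective planar leaves, transversals meeting each leaf once, trivial holonomy) hold for a fibration $\Sigma\to M\to S^1$. There $\pi_1(\Sigma)\neq 1$ stabilizes every leaf of $\w\F$. A deck transformation preserving every leaf does not preserve transverse arcs setwise; it only carries them to other arcs crossing the same leaves.
\end{itemize}
The paper only cites this theorem, but it remarks that the cited argument also covers $\R$-covered foliations. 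For a fibration your $d$ kills $\pi_1(\Sigma)$, so the faithful representation there cannot simply be the germ of the leaf-space action.

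The missing idea is that you do not need $\ker d=1$.
\begin{itemize}
\item If $k_1,\dots,k_m\in\ker d$, each fixes pointwise a tail of $R_{x_0}$. These tails are nested, so all $k_i$ fix a common leaf $y$.
\item Hence $\langle k_1,\dots,k_m\rangle$ lies in the stabilizer of $y$, which is $\pi_1$ of the leaf of $\F$ covered by $y$.
\item One-sided branching excludes the product foliation of $S^2\times S^1$, so leaves of $\w\F$ are planes. This stabilizer is therefore a free, $\Z^2$, or closed orientable surface group, hence left-orderable.
\item Left-orderability is a local property, so $\ker d$ is left-orderable.
\item $d(\pi_1(M))\le\G_\infty$ is left-orderable by Navas.
\item An extension of a left-orderable group by a left-orderable group is left-orderable (lexicographic positive cone).
\end{itemize}
This gives left-orderability of $\pi_1(M)$.

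The faithful representation then follows formally. The countable left-orderable group $\pi_1(M)$ embeds in $\Home(\R)$ by dynamical realization. $\Home(\R)$ embeds in $\G_\infty$ by conjugating into $(0,1)$ and extending $1$-periodically, because the germ at $+\infty$ of a $1$-periodic homeomorphism determines it.

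Whether your specific $d$ is faithful in the one-sided case is a separate question. Your sketch does not settle it, and the statement does not require it.
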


The proof of Theorem \ref{LO one-sided branching} in \cite{Z25} only uses
the property that $\F$ has no branching in one of the two directions,
so the same argument also applies when $\F$ is $\R$-covered.
Thus,
if a closed $3$-manifold admits a co-orientable taut foliation which 
either has one-sided branching or is $\R$-covered,
then this foliation induces a faithful representation of
its fundamental group into $\G_\infty$.

Finally, we describe known sources of taut foliations with two-sided branching.
Recall that \emph{genuine} essential laminations were introduced
~\cite{GK98a, GK98b} as essential laminations that cannot be obtained
by splitting open taut foliations along leaves.

\begin{defn}\rm
An essential lamination is \emph{genuine} if
it has at least one complementary region
that is not homeomorphic to a surface bundle over a closed interval.
\end{defn}

We record the following consequence,
which implies that many taut foliations have two-sided branching.

\begin{prop}\label{genuine two-sided branching}
Let $\Ll$ be an essential sublamination of a taut foliation $\F$.
If $\Ll$ is genuine, then $\F$ has two-sided branching.
\end{prop}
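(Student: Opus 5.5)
The plan is to show that a genuine essential sublamination $\Ll$ produces branching in both the positive and the negative direction of $L(\F)$. Fix a co-orientation on $\F$, which induces one on $\Ll$ and an orientation on $L(\F)$. Since $\Ll$ is genuine, some complementary region $U$ of $\Ll$ is not an $I$-bundle over a surface. Because $\F$ fills $U$ and restricts to a foliation there, the boundary leaves of $U$ come in two kinds: those whose co-orientation points into $U$ and those whose co-orientation points out of $U$.

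The first step is to understand the lifts. Take a component $\w U$ of the preimage of $U$ in $\w M$. Every leaf of $\w\F$ is a properly embedded plane separating $\w M$, and $\w\Ll$ is a lamination by planes. The boundary leaves of $\w U$ are planes, and the leaves of $\w\F$ inside $\w U$ form a subset of $L(\F)$. I intend to use the structure of complementary regions of essential laminations (Gabai--Oertel): after removing the interstitial $I$-bundle parts, the guts of $U$ are nonempty precisely because $\Ll$ is genuine. At a guts piece, $\w U$ has at least three boundary leaves that are pairwise non-parallel, meaning no $I$-bundle in $\w U$ connects them.

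The second step is to turn these into incomparable pairs. The co-orientation assigns to each boundary leaf $\lambda$ of $\w U$ a side, so $\lambda$ is either a "bottom" leaf (co-orientation into $\w U$) or a "top" leaf. With at least three boundary leaves, the pigeonhole principle gives either two bottoms or two tops. Two distinct bottom leaves $\lambda_1,\lambda_2$ of $\w U$ that are not joined by an $I$-bundle are incomparable in $L(\F)$: a monotone path in the leaf space from one to the other would have to pass through leaves inside $\w U$ and so connect them transversally, which would produce a product region. Moreover, there is no leaf lying below both of them. Any such leaf would lie on the negative side of each, hence outside $\w U$ in two different complementary half-spaces, and these half-spaces are disjoint. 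So two bottom leaves witness branching in the negative direction, and two top leaves witness branching in the positive direction.

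The main obstacle is getting both directions from a single $\Ll$. I will use the fact that $\w U$ is invariant under the stabilizer of $\w U$, together with compactness of the guts. If all guts boundary leaves were tops, say, then following the foliation of $U$ (taut, with no Reeb components) inward from the boundary would force a leaf of $\F|_U$ to be compact, or force $U$ to be a product. Either conclusion contradicts essentiality or genuineness. I expect this to give at least two tops and at least two bottoms, possibly after passing to a different lift or region. In the argument I would attempt first, I would take a transversal through the guts, which enters through a bottom leaf and exits through a top leaf, and duplicate it using the three non-parallel sides. This yields both kinds of incomparable pairs, so $L(\F)$ branches in both directions. Hence $\F$ is neither $\R$-covered nor one-sided branching, so it has two-sided branching.
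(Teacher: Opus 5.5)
\textbf{There is a genuine gap: you establish branching in at most one direction, and the step that would give the other direction is not proved.}

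Your local analysis is sound. Let $\lambda_1\neq\lambda_2$ be two boundary planes of the same lift $\w U$ whose co-orientations both point into $\w U$. Each lies in the open positive half-space of the other. Hence their negative half-spaces are disjoint, so they have no common lower bound and in particular are incomparable. You do not even need the hypothesis that no $I$-bundle joins them. The same holds for two outward-pointing planes.

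So the proposition reduces to finding two inward-pointing boundary planes in some lift, and two outward-pointing ones in some lift. This is where the argument breaks down, in three places.
\begin{enumerate}
\item The claim that $\w U$ has at least three pairwise non-parallel boundary planes is not a stated consequence of Gabai--Oertel, and you do not prove it.
\item Even granting it, pigeonhole gives only one direction.
\item Your remedy for the other direction covers only the degenerate case where all boundary leaves of $U$ have the same type. That case is ruled out directly by tautness: a closed transversal leaving $U$ through an outward-pointing leaf must re-enter through an inward-pointing one. Your stated reason, that one gets a compact leaf contradicting essentiality, is wrong, since compact leaves of $\F$ inside $U$ contradict nothing.
\end{enumerate}

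The critical case is a lift with a unique inward-pointing plane $P^-$ and several outward-pointing ones. Excluding $\R$-covered foliations already requires ruling out a lift with exactly one plane of each type, which is the same issue. In this case $\mathrm{Stab}(\w U)=\mathrm{Stab}(P^-)$, because deck transformations preserve the co-orientation. So you would have to show that a complementary region whose bottom leaf carries its entire fundamental group is an $I$-bundle over that leaf. This is a genuine relative-product statement in $3$-manifold topology: one would apply, to the guts, the classical theorem that an incompressible boundary subsurface inducing a $\pi_1$-isomorphism forces a product structure, taking care with disconnected horizontal boundary and the interstitial pieces. ``Duplicating a transversal through three non-parallel sides'' does not supply this.

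The paper avoids the issue by passing to a minimal sublamination $\Ll_0\subseteq\Ll$ and citing structure theorems.
\begin{itemize}
\item If $\F$ is $\R$-covered, Fenley's argument \cite[Proposition~2.6]{Fen02} shows every complementary region of $\Ll_0$ is a foliated $I$-bundle.
\item If $\F$ has one-sided branching, Calegari \cite[Theorem~2.2.5]{Cal03} shows these regions are inessential pockets, which are $I$-bundles.
\end{itemize}
Each complementary region of $\Ll$ lies inside one of $\Ll_0$ and inherits its $I$-bundle structure, contradicting genuineness. These cited results are exactly the ingredient your direct approach is missing.
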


\begin{proof}
We argue by excluding the cases where $\F$ is $\R$-covered or
has one-sided branching.
Recall that a lamination is \emph{minimal} if 
every sublamination is equal to the lamination itself.

First suppose that $\F$ is $\R$-covered. 
Then $\F$ is not minimal since $\Ll \varsubsetneqq \F$;
let $\Ll_0$ be a minimal sublamination of $\Ll$. 
As illustrated in the proof of \cite[Proposition 2.6]{Fen02}, 
every complementary region $C_0$ of $\Ll_0$ 
is homeomorphic to an oriented $I$-bundle over a surface, 
and the restriction of $\F$ to $C_0$ is a \emph{foliated $I$-bundle}, 
meaning that 
$\F$ is transverse to the $I$-fibers of $C_0$ \cite[Definition~2.3]{Fen02}
(we note that \cite[Proposition 2.6]{Fen02} assumes that
$\F$ has no compact leaf,
while this assumption is not needed in this part of the proof).
Since $\Ll_0 \subseteq \Ll$, 
any complementary region $C$ of $\Ll$ is contained in 
some complementary region $C_0$ of $\Ll_0$. 
The $I$-bundle structure on $C_0$ restricts to 
an $I$-bundle structure on $C$, 
so $C$ is homeomorphic to an oriented $I$-bundle over a surface.
It follows that every complementary region of 
$\Ll$ is a surface bundle over an interval. 
Hence $\Ll$ is not genuine, a contradiction.

Next suppose that $\F$ has one-sided branching.
Let $\Ll_0$ be a minimal sublamination of $\Ll$.
By \cite[Theorem~2.2.5]{Cal03},
each complementary region of $\Ll_0$ in $M$
is an \emph{inessential pocket}
in the sense of \cite[Definition~2.2.3]{Cal03}.
As illustrated in the proof of \cite[Theorem~2.2.7]{Cal03},
any inessential pocket is homeomorphic to 
a surface bundle over a closed interval.
More precisely,
there is a taut foliation $\F'$ of $M$ such that
$\F$ is obtained from $\F'$ by blowing-up some leaves
and possibly perturbing each blown-up region to certain foliated $I$-bundle,
with each complementary region of $\Ll_0$ 
a foliated $I$-bundle obtained from perturbing some blown-up region.
As in the proof of the previous case,
every complementary region of $\Ll$ is a surface bundle over an interval,
so $\Ll$ is not genuine, a contradiction.
Therefore, $\F$ contains no genuine essential sublamination in this case.

Thus, $\F$ must have two-sided branching whenever $\Ll$ is genuine.
\end{proof}

\subsection{Pseudo-Anosov flows}\label{subsec: pseudo-Anosov flow}

Pseudo-Anosov flows can be formulated in several ways.
We work with a topological definition; see \cite[Definition~7.1]{Fen02}.

\begin{defn}[Pseudo-Anosov flows]\rm\label{topological pseudo-Anosov flow}
Let $M$ be a closed orientable $3$-manifold.
Fix a Riemannian metric
$d(\cdot,\cdot)$ on $M$.
Let $\phi^{t}\colon M\to M$ $(t\in\R)$ be a continuous flow on $M$.
The flow $\phi$ is called a \emph{pseudo-Anosov flow}
if the following conditions are satisfied.

\begin{enumerate}[(i)]

\item
There exist two (possibly singular) $2$-dimensional foliations 
$\Fws(\phi)$ and $\Fwu(\phi)$ of $M$,
called the \emph{weak stable foliation} and 
\emph{weak unstable foliation} of $\phi$, respectively,
such that every orbit of $\phi$ is contained in a leaf of each foliation.
There is a (possibly empty) finite collection of closed orbits
$\gamma_1,\ldots,\gamma_n$ which form precisely the singular set of $\Fws$ and $\Fwu$.
Away from $\bigcup_{i=1}^n \gamma_i$,
the restrictions of $\Fws(\phi)$ and $\Fwu(\phi)$ 
are regular foliations transverse to each other.
Each $\gamma_i$ is called a \emph{singular orbit} of $\phi$,
and each leaf of $\Fws(\phi)$ or $\Fwu(\phi)$ containing a singular orbit
is homeomorphic to a $p$-prong bundle over $S^1$ for some $p \geqslant 3$.

\item
Suppose that $w,x \in M$ are two points lying in the same leaf of $\Fws(\phi)$,
then there exists $h \in \Home(\R)$ such that
\[\lim_{t\to+\infty} d(\phi^{t}(w),\phi^{h(t)}(x)) = 0.\]
Suppose that $y,z \in M$ are two points lying in the same leaf of $\Fwu(\phi)$,
then there exists $h \in \Home(\R)$ such that
\[\lim_{t\to-\infty} d(\phi^{t}(y),\phi^{h(t)}(z)) = 0.\]

\item
For two points $w,x \in M$ 
lying in two distinct orbits of the same leaf $\lambda$ of $\Fws(\phi)$,
the distance 
$d_\lambda(\phi^t(w),\phi^\R(x))$ is sufficiently large when $t \to -\infty$.
For two points $y,z \in M$ 
lying in two distinct orbits of the same leaf $\mu$ of $\Fwu(\phi)$,
the distance
$d_\mu(\phi^t(y),\phi^\R(z))$ is sufficiently large when $t \to +\infty$.
Here,
$d_\lambda$ and $d_\mu$ denote the path metric on $\lambda$ and $\mu$ induced from
$d(\cdot,\cdot)$, respectively.
\end{enumerate}
\end{defn}

By convention, the orbit $\phi^\R(x)$ of $x \in M$ is denoted by $\phi(x)$.
For each singular leaf $l$ of $\Fws(\phi)$ or $\Fwu(\phi)$ 
with singular orbit $\gamma$,
each component of $l \setminus \setminus \gamma$ is called 
a \emph{half-leaf} of $\phi$.
If $l'$ is a regular leaf of $\Fws(\phi)$ or $\Fwu(\phi)$
containing a closed orbit $\gamma'$,
we also call each component of $l' \setminus \setminus \gamma'$ a half-leaf.

Suspension flows in pseudo-Anosov mapping tori
are natural analogues of pseudo-Anosov flows
for manifolds with toroidal boundary.

\begin{defn}[Suspension flows]\rm\label{def: suspension flow}
Let $\varphi:\Sigma\to\Sigma$ be an orientation-preserving pseudo-Anosov homeomorphism
on a compact orientable surface $\Sigma$ with nonempty boundary.
Let $\Fs$ and $\Fu$ be the stable and unstable foliations of $\varphi$,
and let
$M=\Sigma\times I/\stackrel{\varphi}{\sim}$ be the mapping torus of $\varphi$.
We orient each fiber of the interval bundle
\[\{(x,I)\mid x\in\Sigma\}\subseteq \Sigma\times I\]
by the increasing direction in the second coordinate.
Under the equivalence relation $\stackrel{\varphi}{\sim}$,
these oriented fibers descend to a well-defined flow $\psi$ on $M$,
called the \emph{suspension flow} of $\varphi$.
The foliations $\Fs, \Fu$ on $\Sigma$ suspend to
a pair of singular foliations $\Fws(\psi), \Fwu(\psi)$ on $M$,
referred to as the \emph{weak stable foliation} and
\emph{weak unstable foliation} of $\psi$, respectively.
\end{defn}


For each leaf $l$ of $\Fs$ disjoint from $\partial \Sigma$ that
contains a closed orbit $\gamma$,
each component of $l \setminus \setminus \gamma$
is called a \emph{half-leaf} of $\Fws(\psi)$.
The suspension of each prong of $\Fs$ at a boundary singularity
is also called a half-leaf.
We use the same terminology for $\Fwu(\psi)$.

We now describe the structure of $\Fs$ and $\Fu$ near $\partial \Sigma$.
Let $C$ be a component of $\partial \Sigma$,
and let $u_1, \ldots, u_k$ be the singularities of $\Fs$ on $C$.
Each $u_i$ has exactly three separatrices:
two are contained in $C$, and the third is
a prong pointing into the interior of $\Sigma$.
Under the homeomorphism $\varphi$,
there are exactly $2k$ periodic points on $C$:
$k$ of them are \emph{attracting periodic points},
namely $u_1,\ldots,u_k$,
and the remaining $k$ are \emph{repelling periodic points},
which are precisely the singularities of $\Fu$ on $C$.
See \cite[Appendix]{CanC99} and \cite[p. 466]{R01} for detailed descriptions.



As introduced in \cite[Section~5]{GO89},
periodic orbits of $\psi$ give rise to
the following canonical local invariant associated to each boundary component of $M$.
See also \cite[Section~8]{Gab97} and \cite{R01}.

\begin{defn}[Degeneracy locus]\rm\label{def: degeneracy locus}
Under the assumptions of Definition \ref{def: suspension flow},
let $T_1,\ldots,T_r$ be the boundary components of $M$.
The set of periodic orbits on $T_i$
consists of $2n_i$ parallel essential simple closed curves for some $n_i \in \N_+$:
$n_i$ of them arise from attracting periodic points of $\varphi$,
and the remaining $n_i$ arise from repelling periodic points of $\varphi$.
We call the common slope of these curves the \emph{degeneracy slope} of $T_i$,
denoted by $\delta_{T_i}$.
The \emph{degeneracy locus} of $T_i$ is defined to be
the union of those $n_i$ periodic orbits arising from attracting periodic points.
We identify it with
\[d(T_i)=n_i \cdot \delta_{T_i},\]
where $n_i$ is called the \emph{multiplicity} of $d(T_i)$.
\end{defn}

By convention, we set
\[\Delta(\alpha,d(T_i)) = n_i \cdot \Delta(\alpha,\delta_{T_i})\]
for any rational slope $\alpha$ on $T_i$.

Fried \cite{Fri83} introduced an operation
producing pseudo-Anosov flows from the suspension flow
on pseudo-Anosov mapping tori of closed surfaces.
To adapt this construction to our setting,
we describe an equivalent formulation for $(M,\psi)$
in terms of Dehn fillings along $\partial M$.

\begin{cons}[{\cite{Fri83}}]\rm\label{Fried}\rm
Under the assumptions in Definition \ref{def: degeneracy locus},
for each $1 \leqslant i \leqslant r$ choose a slope $s_i$ on $T_i$
with $\Delta(s_i,d(T_i)) \geqslant 2$.
Perform Dehn filling along each $T_i$ with slope $s_i$,
and let $N$ denote the resulting manifold.
Let $V_i$ be the filling solid torus attached to $T_i$.
By collapsing each $V_i$ to its core curve $\rho_i$
and identifying the complement
$N-\bigcup_{i=1}^r \rho_i$
with $\Int(M)$,
the restriction $\psi\mid_{\Int(M)}$ extends uniquely to a flow $\psi'$ on $N$,
which is a pseudo-Anosov flow.
The weak stable and unstable foliations
$\Fws(\psi), \Fwu(\psi)$
induce a pair of singular foliations on $N$,
which are precisely the stable and unstable foliations of $\psi'$.
Each $\rho_i$ has $\Delta(s_i,d(T_i))$ prongs
in the leaves of these foliations that contain it.
\end{cons}

Compare this construction with Goodman's surgery~\cite{Goo83}.
An equivalence between these two types of surgeries
is discussed in \cite[Chapter 0, Section 2, pp.~xix--xx]{Sha20}.

From the perspective of the universal cover,
the structure of a pseudo-Anosov flow can be described by its orbit space,
which we now define.

\begin{defn}\rm\label{orbit space}
    Let $\phi$ be a pseudo-Anosov flow in a closed $3$-manifold $N$,
    and let $p: \w N \to N$ denote the universal cover of $N$. 
    Then $\phi$ lifts to a flow $\w \phi$ in $\w N$,
    and the weak stable and unstable foliations $\Fws(\phi), \Fwu(\phi)$ lift to
a pair of (possibly singular) foliations $\w{\Fws(\phi)}, \w{\Fwu(\phi)}$ of $\w N$.
By projecting each flowline of $\w \phi$ to a single point,
we obtain a quotient space $\Or(\phi)$ of $\w N$ endowed with
the quotient topology from $\w \phi$.
The space $\Or(\phi)$ is referred to as the \emph{orbit space} of $\phi$.
The foliations $\w{\Fws(\phi)}, \w{\Fwu(\phi)}$ descend to
a pair of one-dimensional singular foliations,
denoted $\Ors(\phi), \Oru(\phi)$ respectively.
\end{defn}

It was proved in \cite[Proposition 4.2]{FenM01} that

\begin{THM}
The orbit space $\Or(\phi)$ of any pseudo-Anosov flow $\phi$
is homeomorphic to a topological $2$-plane.
\end{THM}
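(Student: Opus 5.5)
The plan is to follow the approach of Fenley--Mosher: build the orbit space directly out of the lifted flow and then identify it topologically. I will first set up the topology of $\Or(\phi)$ with a good local model, then prove that it is a simply connected Hausdorff surface, and finally invoke the classification of surfaces.

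\emph{Step 1: local charts.} Away from the singular orbits, the transversality of $\Fws(\phi)$ and $\Fwu(\phi)$ gives flow boxes. So every orbit of $\w\phi$ has a small transverse disk $D \subseteq \w N$. Since $\w N$ contains no closed orbits and no recurrence back to such a small $D$ at the universal cover level, I will argue that $D$ injects into $\Or(\phi)$. Near a lifted singular orbit, a $p$-prong transverse disk plays the same role. Thus $\Or(\phi)$ is locally homeomorphic to $\R^2$, and the projection $\w N \to \Or(\phi)$ is a locally trivial line bundle.

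Injectivity is where I need the lift $\w\phi$ to have no orbit meeting $D$ twice. I will prove this by contradiction. Suppose an orbit meets $D$ twice. Closing the orbit segment up inside $D$ gives a loop transverse to $\Fws(\phi)$. Transverse loops to the stable foliation are essential, because the weak stable foliation is taut (it has no Reeb components, by properties (ii) and (iii)). Such an essential loop cannot exist in the simply connected space $\w N$. This settles Step 1.

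\emph{Step 2: Hausdorffness.} This is the main obstacle. Two distinct orbits $a,b$ that cannot be separated would produce orbits of $\w\phi$ passing arbitrarily near both $a$ and $b$. I would follow Fenley--Mosher and use the lifted foliations. The leaf spaces of $\w{\Fws}$ and $\w{\Fwu}$ are (possibly non-Hausdorff) trees. Each orbit is exactly the intersection of its stable leaf and its unstable leaf. Within a single leaf, the orbit space of that leaf is a line, by the contraction and expansion properties (ii) and (iii).

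Suppose orbits $c_n$ converge to both $a$ and $b$. I would then show that $a$ and $b$ lie on the same stable leaf and on the same unstable leaf. This uses the product structure of flow boxes, together with the fact that leaves of a taut foliation in $\w N$ are properly embedded planes. Since $a$ and $b$ share both leaves, they must coincide. I expect this step to require care with the non-Hausdorff branching of the leaf spaces and with the singular prongs.

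\emph{Step 3: conclusion.} From Steps 1 and 2, $\Or(\phi)$ is a Hausdorff surface. It is second countable, as a quotient of $\w N$ by a locally trivial fibration. The fibers are lines, hence contractible, so the projection $\w N \to \Or(\phi)$ is a homotopy equivalence. Since $\w N$ is simply connected, so is $\Or(\phi)$. Moreover $\Or(\phi)$ is non-compact: a compact orbit space would force $\w N \cong S^2 \times \R$, which contradicts $\w N$ being irreducible (an irreducibility coming from the taut foliation $\Fws(\phi)$ via Novikov--Palmeira). By the classification of simply connected surfaces, $\Or(\phi)$ is homeomorphic to $\R^2$.
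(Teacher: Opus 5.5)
The paper gives no proof of this statement; it quotes it from \cite[Proposition~4.2]{FenM01}. Your outline (product charts, then Hausdorffness, then a simply connected non-compact surface) has the right overall shape, but Step~1 has a faulty justification and Step~2 has a real gap.

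\textbf{Step 1.} The loop you get by closing an orbit segment inside $D$ is not transverse to $\Fws(\phi)$, because the orbit segment lies inside a leaf of $\Fws(\phi)$. The correct reduction goes through the product structure of $D$. Two distinct points of one orbit in $D$ lie on different plaques of $\w{\Fws}\cap D$ or of $\w{\Fwu}\cap D$. So some lifted stable or unstable leaf meets a transverse arc of $D$ twice, which produces a closed transversal in $\w N$. Ruling this out needs a Novikov-type theorem for these singular foliations. This is usually obtained by splitting the singular leaves open to essential laminations and applying Gabai--Oertel; the phrase ``taut by (ii), (iii)'' does not supply it. The same input, rather than Palmeira, is what gives irreducibility of $\w N$ in Step~3. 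Alternatively, you can exclude $S^2$ by Poincar\'e--Hopf, since $\Ors(\phi)$ has only $p$-prong singularities with $p\geqslant 3$.

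\textbf{Step 2.} This is the real content of the theorem, and your mechanism does not work. Flow boxes and properness of leaves do not let you conclude that non-separated orbits $a,b$ share a stable leaf. The leaf spaces of $\w{\Fws}$ and $\w{\Fwu}$ are in general non-Hausdorff, so $L^s(c_n)$ can converge simultaneously to two distinct leaves $L^s(a)\neq L^s(b)$, and likewise for unstable leaves. Your final step also tacitly uses that a lifted stable leaf and a lifted unstable leaf meet in at most one orbit. You need an argument that uses both foliations at once; here is one.
\begin{enumerate}[(1)]
\item Take product charts $A\ni a$, $B\ni b$ of $\Or(\phi)$, so $a\notin B$ and $b\notin A$. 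Use coordinates $(\xi,\eta)$ with stable plaques horizontal, and write $c_n=(\xi_n,\eta_n)$ in $A$ with, say, $\xi_n>0$.
\item By the Novikov-type fact, each leaf meets each chart in at most one plaque. Hence unstable leaves meeting both charts define a continuous injective transition $f$ from $A$-unstable to $B$-unstable coordinates, and $f(\xi_n)\to 0$.
\item Hausdorffness of the orbit space of the single leaf $L^u(a)$ (this is where (ii) enters) gives $L^s(c_n)\cap L^u(a)\notin B$ for large $n$. Symmetrically, $L^s(c_n)\cap L^u(b)\notin A$.
\item The orbit space of $L^s(c_n)$ is a Hausdorff line. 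So the overlap of its $A$-plaque and $B$-plaque is an interval $(k_n,1)$ in $A$-coordinates, with $0\leqslant k_n<\xi_n$ and $|f(\xi)|\to 1$ as $\xi\to k_n^+$.
\item Since $\inf k_n=0$, this contradicts continuity and monotonicity of $f$ on $(0,1)$ together with $f(\xi_n)\to 0$.
\end{enumerate}
Some argument of this kind is indispensable, and your sketch does not provide it.
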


Under Definition~\ref{orbit space},
each component of the preimage of a regular leaf of 
$\Fws(\phi)$ or $\Fwu(\phi)$
is called a regular leaf in 
$\w{\Fws(\phi)}$ or $\w{\Fwu(\phi)}$,
and each component of the preimage of a half-leaf of 
$\Fws(\phi)$ or $\Fwu(\phi)$
is called a half-leaf in 
$\w{\Fws(\phi)}$ or $\w{\Fwu(\phi)}$.
For any $x \in \w N$, denote the orbit of $\w \phi$ through $x$ by $\w \phi(x)$,
parametrized by
\[\w \phi(x) = \{\w \phi^t(x) \mid t \in \R\}\]
so that $p(\w \phi^t(x)) = \phi^t(p(x))$.
We use the same terminology for the pullback of a suspension flow $\psi$.

\subsection{The branched surface}\label{subsec: branched surface}

Branched surfaces provide a useful tool for constructing
foliations and laminations in $3$-manifolds.
In this subsection, we review the basic notions needed in this paper.

\begin{figure}
	\centering
	\subfigure[]{
	\includegraphics[width=0.25\textwidth]{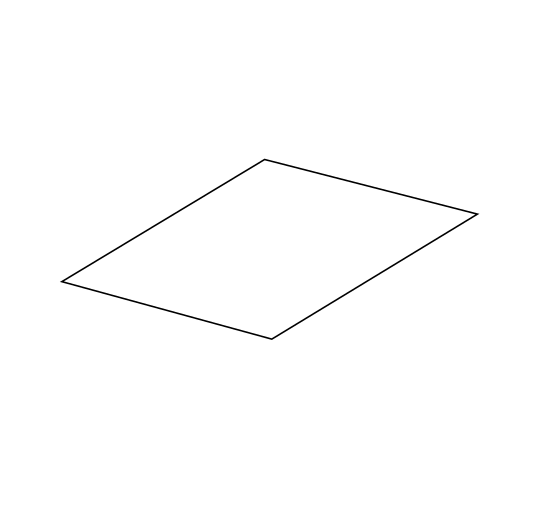}}
\subfigure[]{
	\includegraphics[width=0.25\textwidth]{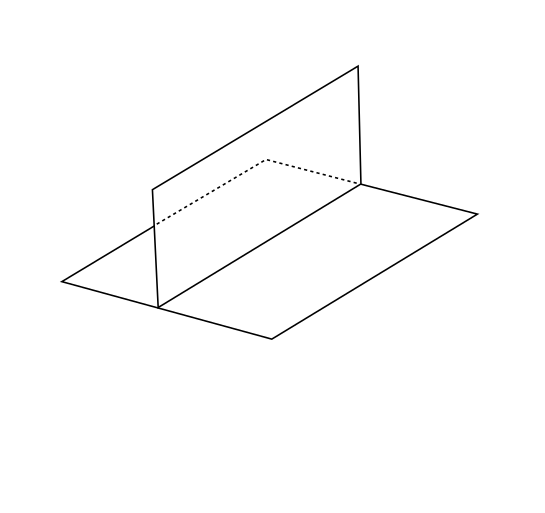}}
	\subfigure[]{
	\includegraphics[width=0.25\textwidth]{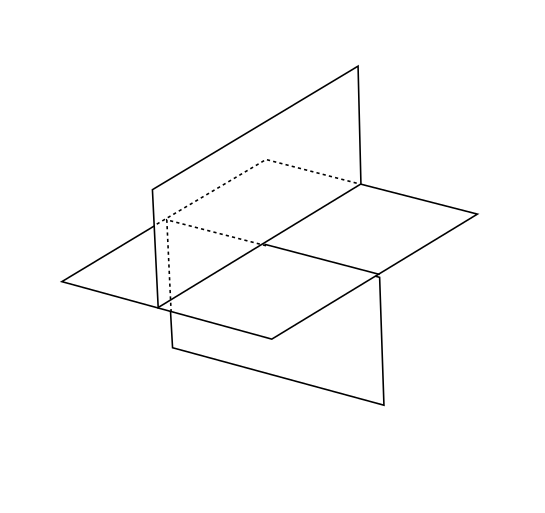}}
	\subfigure[]{
		\includegraphics[width=0.25\textwidth]{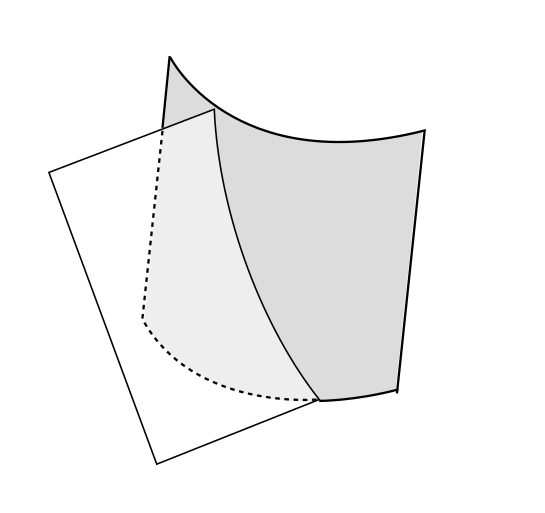}}
	\subfigure[]{
		\includegraphics[width=0.25\textwidth]{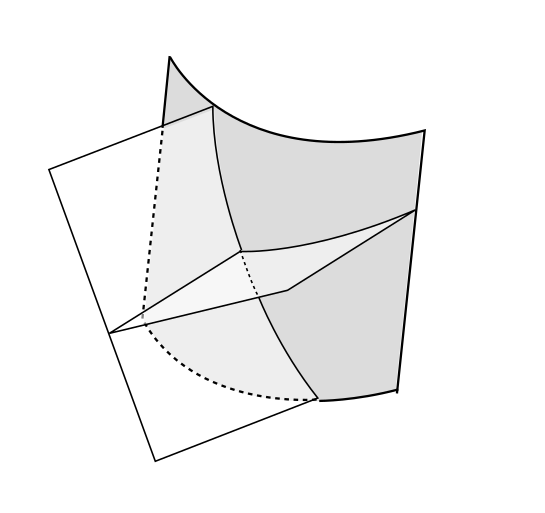}}
	\caption{Local models of a standard spine in a $3$-manifold $M$ of various types.
In (d) and (e), the standard spine has boundary on $\partial M$,
and the shaded regions lie in $\partial M$.
}\label{standard spine picture}
\end{figure}

\begin{defn}\rm \label{standard spine}
Let $M$ be a compact orientable $3$-manifold.
An embedded $2$-complex $X$ in $M$ is called a
\emph{standard spine} if each point of $X$ admits a local model
of the types shown in Figure~\ref{standard spine picture}.
More precisely,
points in $X - \partial M$ have neighborhoods modeled on
Figure~\ref{standard spine picture} (a)$\sim$(c),
and points in $X \cap \partial M$ have neighborhoods modeled on
Figure~\ref{standard spine picture} (d) or (e).
\end{defn}

\begin{defn}\rm
Let $M$ be a compact orientable $3$-manifold. 
A \emph{branched surface} $B$ in $M$ is 
a subspace homeomorphic to a standard spine that 
has a well-defined tangent plane at each point. 
Each point of $B$ has a neighborhood modeled as in Figure~\ref{fig: branched surface}.
\end{defn}

For a branched surface $B$ in a compact orientable $3$-manifold $M$,
a \emph{fibered neighborhood} $N(B)$ of $B$ is 
a neighborhood of $B$ in $M$ foliated by closed intervals,
associated with a quotient map $\pi \colon N(B) \to B$ that sends
each closed interval to a single point of $B$;
see Figure~\ref{fig: fibered neighborhood} (b) for a local model of
the bundle structure of these closed intervals.
We refer to each closed interval as an \emph{interval fiber} of $N(B)$,
and to the map $\pi$ as the \emph{collapsing map}.

\begin{figure}
	\centering
	\subfigure[]{
		\includegraphics[width=0.25\textwidth]{19.png}}
	\subfigure[]{
		\includegraphics[width=0.25\textwidth]{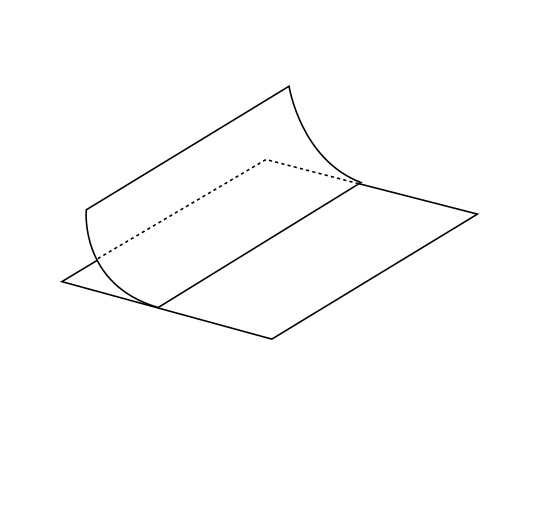}}
	\subfigure[]{
		\includegraphics[width=0.25\textwidth]{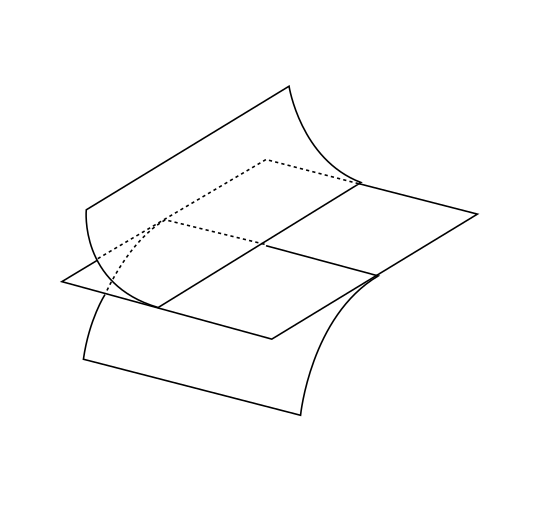}}
	\subfigure[]{
		\includegraphics[width=0.25\textwidth]{24.png}}
	\subfigure[]{
		\includegraphics[width=0.25\textwidth]{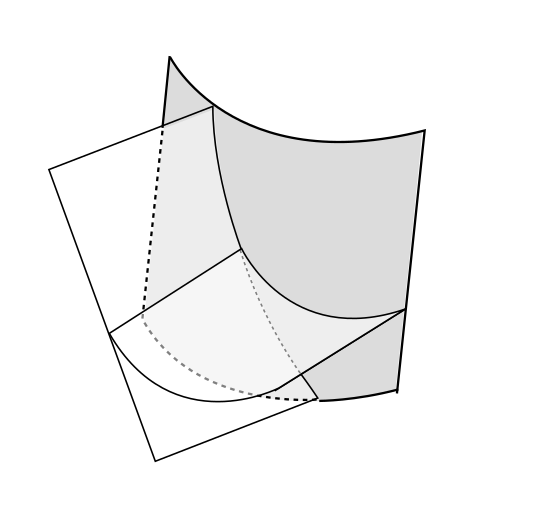}}
	\caption{Local models of a branched surface in 
    a $3$-manifold $M$ of various types.
In (d) and (e), the branched surface has boundary on $\partial M$,
which is shown shaded.}\label{fig: branched surface}
\end{figure}

The surface $\partial N(B) \setminus \setminus \partial M$
can be considered as the union of 
two (possibly disconnected) compact subsurfaces as follows.
Define $\partial_v N(B)$ to be the union of points in
$\partial N(B) \setminus \setminus \partial M$
that lie in the interior of some interval fiber,
and define $\partial_h N(B)$ to be the closure of the union of points in
$\partial N(B) \setminus \setminus \partial M$
that are endpoints of interval fibers.
Then $\partial_v N(B)$ is tangent to the interval fibers of $N(B)$,
and $\partial_h N(B)$ is transverse to the interval fibers of $N(B)$.
The sets $\partial_v N(B)$ and $\partial_h N(B)$ are
called the \emph{vertical boundary} and the \emph{horizontal boundary}
of $N(B)$, respectively.
See Figure \ref{fig: fibered neighborhood} (b) for a local model.

\begin{defn}[cusp directions]\rm
The \emph{branch locus} of $B$, denoted $L(B)$,
is the subset of $B$
consisting of all points that have no Euclidean neighborhood in $B$.
Then $L(B)$ is a graph.
For every edge $e$ of $L(B)$,
there is a component $V$ of $\partial_v N(B)$ for which
$e \subseteq \pi(V)$.
We associate the edge $e$ with a normal vector lying in $B$,
induced from the normal vector on $V$ pointing inward to $N(B)$.
The normal direction at $e$ in $B$ represented by this normal vector 
is called the \emph{cusp direction} at $e$.
\end{defn}

\begin{defn}\rm
Let $B$ be a branched surface in $M$.
A lamination $\Ll$ of $M$ is said to be \emph{carried} by $B$ if,
for some fibered neighborhood $N(B)$ of $B$,
$\Ll$ is contained in $N(B)$ and is transverse to the interval fibers of $N(B)$.
Under this assumption,
the lamination $\Ll$ is said to be \emph{fully carried} by $B$ if
every interval fiber of $N(B)$ intersects some leaf of $\Ll$.
\end{defn}

Each component of $B \setminus \setminus L(B)$ is called
a \emph{branch sector} of $B$.
A branched surface is said to be \emph{co-orientable} if
it admits a continuously varying normal vector field.
If a lamination $\Ll$ is carried by a co-orientable branched surface $B$,
then any co-orientation on $B$ induces a co-orientation on $\Ll$.

\begin{figure}
\centering
	\subfigure[]{
		\includegraphics[width=0.45\textwidth]{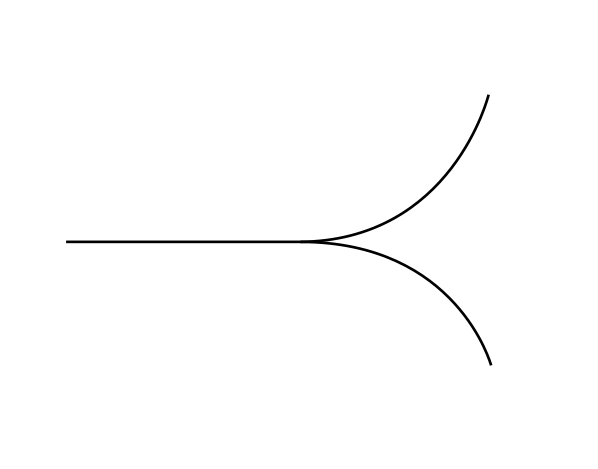}}
	\subfigure[]{
		\includegraphics[width=0.45\textwidth]{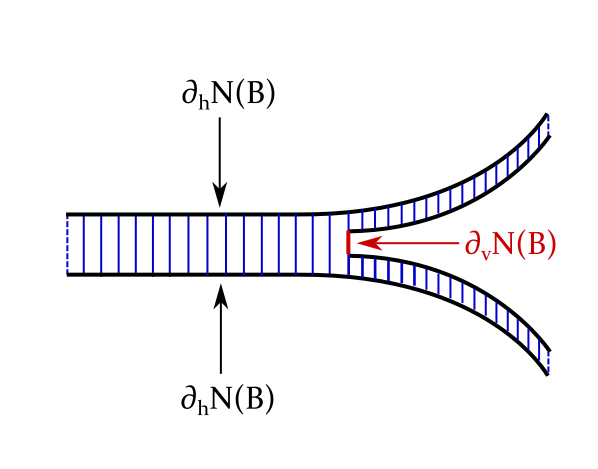}}
	\caption{(a) A local model of a branched surface $B$.
(b) The corresponding local model of 
a fibered neighborhood $N(B)$ of $B$.}\label{fig: fibered neighborhood}
\end{figure}


We refer the reader to \cite{GO89} for the theory of
essential laminations and essential branched surfaces,
and to \cite{Li02, Li03} for the theory of laminar branched surfaces.
These notions provide the background for the branched surfaces
used in the construction of Theorem~\ref{foliation}.
We turn to these specific branched surfaces in the next section.

\section{Foliations associated to admissible systems of arcs}\label{sec: branched surface}

The purpose of this section is to set up a co-orientable taut foliation
associated to each admissible system of arcs
on every Dehn filling considered in Theorem~\ref{main}.
We work under the assumptions of Theorem~\ref{main}.
Recall that $\Sigma$ is a compact orientable surface,
$\varphi$ is an orientation-preserving pseudo-Anosov homeomorphism of $\Sigma$
with invariant foliations $\Fs$ and $\Fu$,
and $M$ is the mapping torus $\Sigma \times I / \stackrel{\varphi}{\sim}$
with boundary components $T_1,\ldots,T_r$.
In addition,
$\Fs$ is co-orientable and $\varphi$ reverses its co-orientation.


\subsection{Branched surfaces associated to admissible systems of arcs}\label{subsec: admissible system of arcs}

In this subsection, we review the construction from~\cite{Z26}
of the branched surfaces used in Theorem~\ref{foliation}.
In~\cite[Subsection~3.1]{Z26},
a branched surface is constructed from 
a finite system of properly embedded arcs chosen in~\cite[Construction~3.2]{Z26}.
For our purposes, we will not need to use 
the particular system of arcs used there. 
Instead, we extract the properties of that system of arcs which
ensure that the resulting branched surface
fully carries essential laminations extending to co-orientable taut
foliations on the Dehn fillings considered in Theorem~\ref{foliation}.
We therefore make the following definition.

\begin{defn}\rm\label{system of arcs}
A family of oriented properly embedded arcs $\alpha = (\alpha_1,\ldots,\alpha_n)$
is called an \emph{admissible system of arcs} 
with respect to $(\Sigma,\varphi)$ if the following conditions hold.
\begin{enumerate}[(i)]
    \item Each $\alpha_i$ ($1 \leqslant i \leqslant n$)
    is positively transverse to $\Fs$ and
    is disjoint from the singularities of $\Fs$.

    \item The set of endpoints $\bigcup_{i=1}^{n} \partial \alpha_i$
    is disjoint from its image under $\varphi$.

    \item For each open segment of
    \[\partial \Sigma - \{\text{singularities of } \Fs\},\]
    there is a unique element of $\bigcup_{i=1}^{n} \partial \alpha_i$
    lying on that segment.
\end{enumerate}
\end{defn}

Note that condition~(i) implies that each endpoint of $\alpha_i$
is not a boundary singularity of $\Fs$.
By condition~(iii), $n$ equals half the total number of
boundary singularities of $\Fs$.

Given an admissible system of arcs,
a branched surface is constructed as in
\cite[Definition~3.4]{Z26}.

\begin{defn}\rm\label{branched surface}
Let $\alpha = (\alpha_1,\ldots,\alpha_n)$ be an admissible system of arcs 
with respect to $(\Sigma,\varphi)$.
We construct a branched surface $B'_\alpha$ in $M$ as follows.
\begin{enumerate}
    \item
    Let
    \[S=(\Sigma \times \{0,1\}) \cup \bigcup_{i=1}^{n}(\alpha_i \times I)
    \subseteq \Sigma \times I\]
    be a standard spine in the product space $\Sigma \times I$.
    For each $1 \leqslant i \leqslant n$,
    the product $\alpha_i \times I$
    is called a \emph{product disk},
    and its two intersection arcs with $\Sigma \times \{0\}$ and $\Sigma \times \{1\}$
    are assigned cusp directions as follows.
    The arc $\alpha_i \times \{0\}$
    has the cusp direction pointing to its left side in $\Sigma \times \{0\}$,
    while the arc $\alpha_i \times \{1\}$
    has the cusp direction pointing to its right side in $\Sigma \times \{1\}$.

    \item
    Let
    \[q:\Sigma \times I \to M\]
    be the canonical quotient map identifying $(x,1)$ with $(\varphi(x),0)$ 
    for all $x \in \Sigma$.
    Let $B'_\alpha$ be the image of $S$ under $q$.
    For each product disk $\alpha_i \times I$,
    the two arcs $\alpha_i \times \{0\}$ and $\alpha_i \times \{1\}$
    are both mapped into the fibered surface $\Sigma \times \{0\}$ of $M$.
    We call the image of $\alpha_i \times \{0\}$ under $q$ the \emph{lower arc}
    of $\alpha_i \times I$,
    and the image of $\alpha_i \times \{1\}$ the \emph{upper arc} 
    of $\alpha_i \times I$.

    \item
Let $\beta_i$ ($1 \leqslant i \leqslant n$) be 
an oriented properly embedded arc on $\Sigma$
isotopic to $\varphi(\alpha_i)$ relative to its endpoints such that
each $\beta_i$ is transverse to $\Fs$,
the arcs $\beta_1,\ldots,\beta_n$ are pairwise disjoint, and
they have only double intersection points with 
$\bigcup_{i=1}^{n}\alpha_i$.
We isotope the union of product disks $\bigcup_{i=1}^{n}(\alpha_i \times I)$
    relative to
    \[\bigcup_{i=1}^{n}(\alpha_i \times \{0\}) \cup
      \bigcup_{i=1}^{n}(\partial \alpha_i \times I)\]
    so that the upper arc of each product disk $\alpha_i \times I$
    is isotoped to $\beta_i \times \{0\}$.
\end{enumerate}
\end{defn}

We note that each arc $\beta_i$ is negatively transverse to $\Fs$,
since $\varphi$ is co-orientation-reversing.

Using the theory of laminar branched surfaces~\cite{Li02, Li03},
it was shown in~\cite[Subsection~3.2]{Z26} that
the branched surface $B'_\alpha$ fully carries
essential laminations, and in~\cite[Subsections~3.3, 3.4]{Z26} that
such laminations can be chosen to intersect
each boundary component $T_i$ ($1 \leqslant i \leqslant r$) in
simple closed curves with any slope in
$J_i \cap (\Q \cup \{\infty\})$,
where $J_i$ is the interval of slopes defined in Theorem \ref{foliation}.
Therefore, such laminations extend to co-orientable taut foliations in
the corresponding Dehn fillings
(see~\cite[Subsection~3.5]{Z26}).
We summarize this as follows.

\begin{THM}[\cite{Z26}]\label{thm: construction from branched surface}
Let $s_i \in J_i \cap (\Q \cup \{\infty\})$ 
for each $1 \leqslant i \leqslant r$,
and let $\s$ denote the multislope $(s_1,\ldots,s_r)$.
Then the branched surface $B'_\alpha$ fully carries
an essential lamination $\Ll'_\alpha$ which intersects
each $T_i$ in a union of parallel simple closed curves of slope $s_i$,
and $\Ll'_\alpha$ extends to a foliation $\F'_\alpha$ with
$\F'_\alpha \mid_{T_i}$ a foliation by simple closed curves of slope $s_i$.
In particular,
the foliation $\F'_\alpha$ extends to 
a co-orientable taut foliation $\F'_\alpha(\s)$ of $M(\s)$.
\end{THM}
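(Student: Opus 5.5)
This theorem is quoted from \cite{Z26} as a summary of that paper's construction, so the plan is to reassemble its ingredients in the stated order. I would go through the three steps below.

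\textbf{Step 1: essential lamination.} First show that $B'_\alpha$ is a laminar branched surface in the sense of Li \cite{Li02, Li03}. This requires checking three things.
\begin{enumerate}[(1)]
\item $\partial_h N(B'_\alpha)$ is incompressible and $\partial$-incompressible in $M \setminus\setminus N(B'_\alpha)$, and there are no monogons.
\item There is no Reeb branched surface and no disk of contact.
\item Every sink disk or half sink disk has been eliminated.
\end{enumerate}
The complementary regions of $B'_\alpha$ are products of the components of $\Sigma\setminus\setminus(\bigcup\alpha_i)$ with intervals, cut further by the arcs $\beta_i$. Incompressibility should then follow from $\chi(\Sigma)<0$ together with the fact that each $\alpha_i$ is transverse to $\Fs$; a transverse arc cannot cobound a disk with another transverse arc in a way that creates a monogon.

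The sink-disk condition is where the hypotheses enter. The cusp directions along the lower arcs point left and those along the upper arcs point right. The $\alpha_i$ are positively transverse to $\Fs$ and the $\beta_i$ are negatively transverse, because $\varphi$ reverses the co-orientation. Consequently, along the boundary of any branch sector some cusp points outward, so no sector is a sink. Condition (iii) of Definition~\ref{system of arcs} is what rules out half sink disks at $\partial\Sigma$. Li's theorem then says that $B'_\alpha$ fully carries an essential lamination.

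\textbf{Step 2: boundary slopes.} Next, control the boundary train track $\tau_i=B'_\alpha\cap T_i$. This track consists of the curve $C_i\times\{0\}$ together with the arcs $\partial\alpha_j\times I$ running from one segment of $C_i\setminus\{\text{singularities}\}$ to the next. Following the curve once around $T_i$ uses $c_i$ steps of $\varphi$.

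Compute the cone of weights carried by $\tau_i$ in terms of the degeneracy locus $(p_i;q_i)$. The goal is to show that the realizable slopes are exactly the interval between $\frac{p_i}{q_i+c_i}$ and $\frac{p_i}{q_i-c_i}$ that avoids $\frac{p_i}{q_i}$. For every rational $s_i$ in $J_i$, choose integer weights on $\tau_i$ whose carried curve is a union of parallel simple closed curves of slope $s_i$.

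Then extend these boundary weights to a compatible family of weights on the sectors of $B'_\alpha$. This is done by splitting $B'_\alpha$ near $T_i$ so that the boundary becomes a single train track with only the slope-$s_i$ curves carried. After the splitting, apply Li's theorem again, so that the carried lamination $\Ll'_\alpha$ meets $T_i$ exactly in these curves.

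\textbf{Step 3: extension to foliations.} Finally, extend $\Ll'_\alpha$ to the foliation $\F'_\alpha$. The complementary regions of $\Ll'_\alpha$ are $I$-bundles, or are guts cut off by product disks, with boundary on $T_i$ consisting of annuli between parallel slope-$s_i$ curves. Fill each of them by standard product or monkey-saddle foliations so that $T_i$ is foliated by circles of slope $s_i$. Then cap off by meridional disks in each filling solid torus to obtain $\F'_\alpha(\s)$. Co-orientability is inherited from a co-orientation of $B'_\alpha$ extending that of $\Sigma\times\{0\}$. Tautness follows because every leaf meets a closed transversal, for instance a flow line of $\psi$ modified near the cores.

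\textbf{Main obstacle.} I expect Step 2 to be the hardest. One needs the exact combinatorics of how the arcs $\partial\alpha_j\times I$ twist around $T_i$ relative to the periodic orbits, in order to obtain precisely the bounds $q_i\pm c_i$ and to make the splitting argument compatible with laminarity.
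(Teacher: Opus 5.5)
Your three steps follow the same route as the paper, which defers to \cite{Z26}: laminarity via Li, then Li's boundary train track theorem, then extension to a foliation. However, at the points where the hypotheses actually do the work you assert rather than argue, and the tautness step fails as written.

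In Step~1, ``consequently some cusp points outward'' does not follow from what you list. What the left/right conventions, together with positive transversality of the $\alpha_i$ and negative transversality of the $\beta_i$, really give is that \emph{every} cusp direction in $\Sigma\times\{0\}$ points along one fixed leafwise orientation of $\Fs$. A sector of $\Sigma\times\{0\}$ all of whose cusps point inward would then trap a generic forward half-leaf of $\Fs$. Such a half-leaf cannot leave through an $\alpha$- or $\beta$-arc, and $\partial\Sigma$ is a union of leaves and singular points, so this contradicts density of half-leaves. Each product disk lies on the two-sheeted side at both of its arcs, so its cusps point out of it. This excludes sink and half sink disks without using condition (iii). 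Likewise, incompressibility and the absence of monogons come from every complementary region being a product $P\times I$, with $P$ a component of $\Sigma\setminus\setminus\bigcup_i\alpha_i$ and horizontal boundary $P\times\partial I$. They do not come from $\chi(\Sigma)<0$.

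In Step~2 your model of $\tau_i=B'_\alpha\cap T_i$ is wrong, and the idea that produces $J_i$ is missing. The track contains all $c_i$ circles $\varphi^k(C_i)\times\{0\}$, and each vertical branch joins an endpoint $x$ of some $\alpha_j$ to $\varphi(x)$ on the next circle. Conditions (ii) and (iii) put exactly one departing and one arriving switch in each of the $p_i$ segments of every circle; this is where (iii) is used, and it makes every complementary region of $\tau_i$ a bigon. The cusps follow the $\Fs$-orientation, which reverses at each singular point of $\partial\Sigma$. Hence a train path travels around the circles in a fixed rotational direction and must cross an odd number of singular points between consecutive vertical branches. The vertical branches miss $d(T_i)$, so a carried curve of slope $a/b$ meets $d(T_i)$ with constant sign at least $c_i|a|$ times, i.e.\ $|p_ib-q_ia|\geqslant c_i|a|$. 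That is the source of the bounds $p_i/(q_i\pm c_i)$, and the two rotational directions give the two sides of $J_i$. No weights on the sectors of $B'_\alpha$ are needed.

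In Step~3 you cannot use $\psi$ as your closed transversal. The product disks of $B'_\alpha$ contain the flow segments $\partial\alpha_i\times I$. In $\Sigma\times I$ they sweep from $\alpha_i$ to $\varphi^{-1}(\beta_i)$, which need not lie to the right of $\alpha_i$ and may cross it, so $\F'_\alpha$ need not be transverse to $\psi$. This is exactly why the paper passes to $B_\alpha$ in Construction~\ref{new branched surface}. Instead, note that a carried leaf that never runs over a product disk is forced across every branch arc of $\Sigma\times\{0\}$, so it covers the whole fiber. This has three consequences:
\begin{itemize}
\item every leaf of $\Ll'_\alpha$, and hence of $\F'_\alpha$, meets $\partial M$;
\item every leaf of $\F'_\alpha(\s)$ crosses the core of a filling solid torus, which is a closed transversal, giving tautness;
\item $B'_\alpha$ carries no closed surface, which settles the torus hypothesis in Li's boundary theorem.
\end{itemize}
Finally, since all complementary regions are products, no guts or monkey saddles occur.
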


\begin{rmk}\label{rmk: independent}
Although the construction in \cite{Z26} is carried out using the
particular system of arcs chosen in~\cite[Construction 3.2]{Z26},
the arguments there only use the properties listed in
Definition \ref{system of arcs}. 
Thus the conclusions summarized in
Theorem \ref{thm: construction from branched surface} hold for every
admissible system of arcs $\alpha$ with respect to $(\Sigma,\varphi)$.
\end{rmk}

\subsection{The modified branched surface $B_\alpha$}\label{subsec: perturb}

The foliations $\F'_\alpha(\s)$ of $M(\s)$ constructed in the previous subsection
are sufficient for the existence statement of taut foliations 
in Theorem~\ref{foliation}.
For the foliations $\F_\alpha(\s)$ used in Theorem~\ref{main}, 
however, we will use a modified version of this construction.
We now describe this modification.



We continue with the setting of the previous subsection.
Let $\psi$ denote the suspension flow of $M$.
Below, we modify the branched surface $B'_\alpha$ constructed previously to 
obtain a new branched surface $B_\alpha$,
which will be transverse to $\psi$ and
fully carry laminations extending to the desired foliations $\F_\alpha(\s)$.

\begin{cons}\rm\label{new branched surface}
Let $\alpha = (\alpha_1,\ldots,\alpha_n)$ be an admissible system of arcs 
with respect to $(\Sigma,\varphi)$,
and let $B'_\alpha$ be a branched surface 
as given in Definition \ref{branched surface}.
We modify the branched surface $B'_\alpha$ as follows.

\begin{enumerate}
    \item
For each $\alpha_i$, 
choose a sufficiently small neighborhood $U_i$ of $\alpha_i$ in $\Sigma$, 
so that $U_1,\ldots,U_n$ are pairwise disjoint and 
each $U_i$ is disjoint from the singularities of $\Fs$ and $\Fu$
and from $\bigcup_{j=1}^n \varphi^{-1}(\partial \alpha_j)$.
Let $\alpha^+_i$ be an oriented properly embedded arc in $\Sigma$ such that
\begin{enumerate}[label=(\roman*), leftmargin=*, align=left]
    \item $\alpha^+_i$ is contained in the component of $U_i - \alpha_i$
    on the right side of $\alpha_i$,
    \item $\alpha^+_i$ is isotopic to $\alpha_i$,
    \item $\alpha^+_i$ is positively transverse to $\Fs$.
\end{enumerate}
Then $\alpha^+_1,\ldots,\alpha^+_n$ are still pairwise disjoint.
We further isotope each $\alpha^+_i$ slightly so that
each $\alpha_i$ intersects $\bigcup_{k=1}^{n} \varphi(\alpha^+_k)$ 
only in transverse double points,
with all the above conditions preserved.

\item
Let $D(\alpha_i)$ denote the product disk of $B'_\alpha$
produced from $\alpha_i \times I$.
We isotope $D(\alpha_i)$, fixing its lower arc, 
so that its upper arc is moved to $\alpha^+_i \times \{1\}$;
under the quotient map $q:\Sigma \times I \to M$, 
this upper arc is identified with
$\varphi(\alpha^+_i) \times \{0\}
\subseteq \Sigma \times \{0\}$.
\end{enumerate}
\end{cons}

We note that condition (i) of Definition \ref{system of arcs}
ensures that each endpoint of $\alpha_i$ lies in 
a neighborhood of $\partial \Sigma$
disjoint from the boundary singularities of $\Fs$,
which guarantees that $\alpha_i$ can be isotoped to 
a parallel arc $\alpha^+_i$ on its right side
while preserving transversality to $\Fs$.

\begin{figure}
	\centering
	\subfigure[]{
	\includegraphics[width=0.35\textwidth]{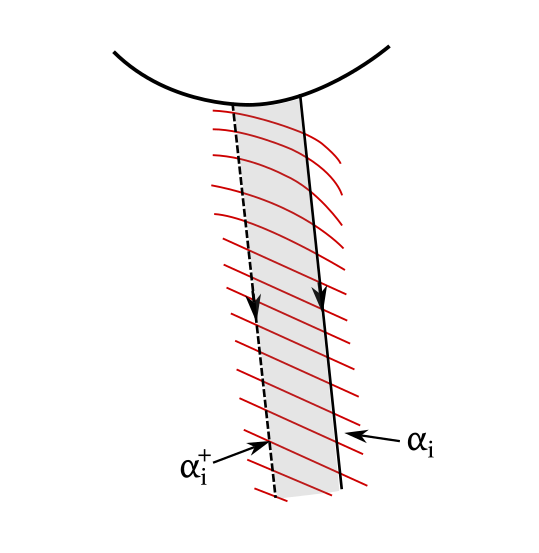}}
\subfigure[]{
	\includegraphics[width=0.35\textwidth]{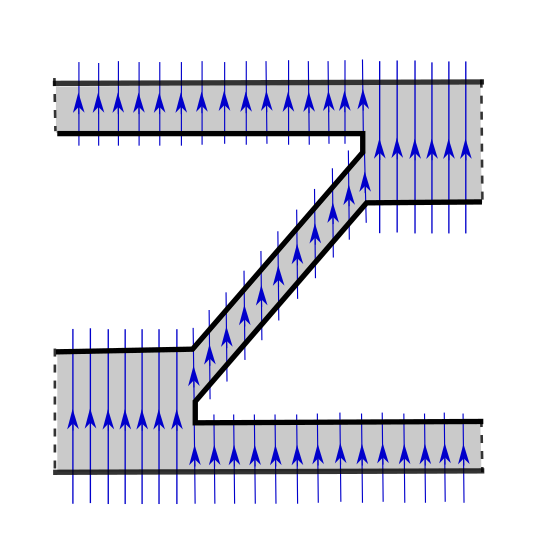}}
	\caption{(a) A local picture of the arc $\alpha^+_i$ 
obtained from $\alpha_i$ by isotopy,
and the band $B_i$ represented by the shaded region.
The red segments represent leaves of $\Fs$.
(b) A local model of $N(B_\alpha)$, 
whose interval fibers are contained in orbits of $\psi$.}\label{fig: modify}
\end{figure}

There exists a properly embedded band $B_i: I \times I \to \Sigma$ with
two horizontal sides in $\partial \Sigma$ and
two vertical sides equal to $\alpha_i$ and $\alpha^+_i$,
respectively,
where the inward normal vectors along $\alpha_i$ and $\alpha^+_i$
point to the right of $\alpha_i$ and to the left of $\alpha^+_i$.
We note that
every leaf of $\Fs$ intersects $B_i$ in a closed segment.
See Figure~\ref{fig: modify} (a) for a local picture of $\alpha^+_i$ and $B_i$.

Since $\alpha_i$ and $\alpha^+_i$ bound the band $B_i$,
we may isotope $D(\alpha_i)$, relative to its lower and upper arcs,
to a section of the subbundle $B_i \times I$ of 
the $I$-bundle structure $\Sigma \times I$ in $M$.
In particular, the projection $\Sigma \times I \to \Sigma$
takes $D(\alpha_i)$ homeomorphically onto $B_i$,
so $D(\alpha_i)$ is transverse to the second coordinate in $B_i \times I$,
and hence transverse to $\psi$.
Since $\Sigma \times \{0\}$ is transverse to $\psi$ and
every product disk is now also transverse to $\psi$,
there exists a fibered neighborhood $N(B_\alpha)$ such that
each interval fiber lies in an orbit of $\psi$;
see Figure~\ref{fig: modify} (b).
Henceforth,
by a fibered neighborhood of $B_\alpha$,
we will always take it to be $N(B_\alpha)$,
and all laminations carried by $B_\alpha$ are assumed to lie in $N(B_\alpha)$
and transverse to its interval fibers.

We now explain that Theorem~\ref{thm: construction from branched surface}
still applies after replacing $B'_\alpha$ by $B_\alpha$.
For each $i$, isotope $\alpha^+_i$ slightly near its two endpoints to 
a properly embedded arc $\wh\alpha^+_i$ such that 
$\wh\alpha^+_i$ has the same endpoints as $\alpha^+_i$, 
while $\Int(\wh\alpha^+_i)$ remains contained in the component of
$U_i - \alpha_i$ on the right side of $\alpha_i$ and remains positively
transverse to $\Fs$.
Let $\wh B_\alpha$ be the branched surface obtained by applying
Construction~\ref{branched surface} with
$\wh\alpha^+_i \times \{1\}$ chosen as the upper arc of the product disk 
associated to $\alpha_i$ for each $i$, 
and let $\wh D(\alpha_i)$ denote the corresponding
product disk of $\wh B_\alpha$.
Then Theorem \ref{thm: construction from branched surface}
applies to $\wh B_\alpha$.
Since each $U_i$ is disjoint from
$\bigcup_{j=1}^n \varphi^{-1}(\partial \alpha_j)$,
the pair
$(\bigcup_{i=1}^n \varphi(\widehat{\alpha}^+_i),
\bigcup_{i=1}^n \alpha_i)$
is isotopic to the pair
$(\bigcup_{i=1}^n \varphi(\alpha^+_i),
\bigcup_{i=1}^n \alpha_i)$
through an isotopy with the endpoints of all arcs remaining on $\partial \Sigma$.
Thus $\wh B_\alpha$ can be isotoped to $B_\alpha$ by moving
the upper arc
$\widehat{\alpha}^+_i \times \{1\}$ of $\wh D(\alpha_i)$
back to $\alpha^+_i \times \{1\}$ for each $i$,
without producing any new triple points.
Hence every lamination produced by
Theorem \ref{thm: construction from branched surface} for $\wh B_\alpha$
can be pushed forward along this isotopy to a lamination fully carried by
$B_\alpha$, 
which intersects each boundary component in simple closed curves
of the same slope.
As in Theorem~\ref{thm: construction from branched surface},
this lamination extends to a foliation on $M$ whose restriction to each
component of $\partial M$ is a foliation by simple closed curves of 
the same slope as above, 
and this foliation then extends to 
co-orientable taut foliations in the corresponding Dehn fillings. 
Thus we obtain the following proposition.

\begin{prop}\label{prop: B_alpha}
The conclusion of
Theorem~\ref{thm: construction from branched surface}
remains valid after replacing $B'_\alpha$ by $B_\alpha$.
\end{prop}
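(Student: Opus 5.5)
The plan is to realize $B_\alpha$ as the endpoint of an isotopy from a branched surface to which Theorem~\ref{thm: construction from branched surface} already applies, and then to transport the laminations along that isotopy. Remark~\ref{rmk: independent} shows that the conclusions of Theorem~\ref{thm: construction from branched surface} depend only on the properties in Definition~\ref{system of arcs}. So it suffices to exhibit such a branched surface and an ambient isotopy to $B_\alpha$ that preserves the local branching structure.

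\emph{Step 1.} Perturb each $\alpha^+_i$ near its endpoints to an arc $\wh\alpha^+_i$ with the same endpoints as $\alpha^+_i$. The interior of $\wh\alpha^+_i$ should stay in the component of $U_i-\alpha_i$ to the right of $\alpha_i$ and remain positively transverse to $\Fs$. Apply Definition~\ref{branched surface}, using $\wh\alpha^+_i\times\{1\}$ as the upper arc of the $i$-th product disk; this yields $\wh B_\alpha$. Since $\wh\alpha^+_i$ is isotopic to $\alpha_i$ through arcs positively transverse to $\Fs$, the image $\varphi(\wh\alpha^+_i)$ is a legitimate choice of $\beta_i$. It is negatively transverse to $\Fs$, pairwise disjoint, and meets $\bigcup\alpha_j$ only in double points. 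Hence $\wh B_\alpha$ is one of the branched surfaces $B'_\alpha$, and Theorem~\ref{thm: construction from branched surface} supplies the lamination $\Ll'_\alpha$, the foliation $\F'_\alpha$, and the extension $\F'_\alpha(\s)$ for $\wh B_\alpha$.

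\emph{Step 2.} Isotope $\wh B_\alpha$ to $B_\alpha$ by sliding the upper arcs back to $\alpha^+_i\times\{1\}$. The key check is that $U_i$ avoids $\bigcup_j\varphi^{-1}(\partial\alpha_j)$. Consequently, near $\partial\Sigma$ the endpoints of $\varphi(\wh\alpha^+_i)$ and $\varphi(\alpha^+_i)$ never pass through endpoints of the $\alpha_j$. The pair $(\bigcup\varphi(\wh\alpha^+_i),\bigcup\alpha_i)$ is therefore isotopic to $(\bigcup\varphi(\alpha^+_i),\bigcup\alpha_i)$ through pairs of properly embedded arcs meeting transversely. Along this isotopy no triple points or tangencies are created, so the branch locus and the cusp directions vary by ambient isotopy. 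Extending to an ambient isotopy of $M$ fixed near $\partial M$ away from a collar carries $N(\wh B_\alpha)$ onto a fibered neighborhood of $B_\alpha$.

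\emph{Step 3.} Push $\Ll'_\alpha$ forward along the ambient isotopy. This gives a lamination fully carried by $B_\alpha$ with the same boundary slopes $s_i$ on each $T_i$; the ambient isotopy preserves essentiality. The foliation $\F'_\alpha$ and its extension to $M(\s)$ push forward in the same way. The final adjustment is to make the result carried in the specific $N(B_\alpha)$ whose interval fibers are orbit segments of $\psi$. Any two fibered neighborhoods of a branched surface are isotopic compatibly with their fiber structures, so a further small isotopy achieves this.

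I expect Step 2 to be the main obstacle. Near $\partial\Sigma$ one must verify that the endpoint motion of the upper arcs never crosses another endpoint on $\partial\Sigma$, since that would alter the combinatorics of the branch locus at $\partial M$. The disjointness hypotheses imposed on $U_i$ in Construction~\ref{new branched surface} are exactly what rule this out.
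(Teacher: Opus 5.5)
Your argument is essentially the paper's: build $\wh B_\alpha$ as a genuine instance of Definition~\ref{branched surface}, apply Theorem~\ref{thm: construction from branched surface} to it, and use $U_i\cap\bigcup_j\varphi^{-1}(\partial\alpha_j)=\emptyset$ to isotope $\wh B_\alpha$ onto $B_\alpha$ without new triple points, pushing the lamination and foliations forward. One wording fix, a slip the paper's own text shares: $\wh\alpha^+_i$ should have the same endpoints as $\alpha_i$, not $\alpha^+_i$, because Definition~\ref{branched surface} requires $\varphi(\wh\alpha^+_i)$ to be isotopic to $\varphi(\alpha_i)$ \emph{rel endpoints}, and your Step~2 has content only because these endpoints are then slid along $\partial\Sigma$ from $\varphi(\partial\alpha_i)$ to $\varphi(\partial\alpha^+_i)$.
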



\subsection{Fried's surgery for the suspension flow}\label{subsec: Fried's surgery}

In this subsection,
we verify that each Dehn filling considered in Theorem~\ref{foliation} 
admits a pseudo-Anosov flow obtained from the suspension flow $\psi$ 
by Fried's surgery,
and that the laminations produced in Proposition~\ref{prop: B_alpha}
extend to co-orientable taut foliations transverse to 
the resulting pseudo-Anosov flows in the Dehn fillings.

Recall that the degeneracy slope of $T_i$ is denoted by $\delta_{T_i}$,
and that the degeneracy locus is written as
\[d(T_i)=n_i \cdot \delta_{T_i},\]
where $n_i$ is the multiplicity of $d(T_i)$.
By the convention in Subsection~\ref{subsec: convention},
each slope on $T_i$ represented by a simple closed curve
corresponds to an element of $\Q \cup \{\infty\}$.
In this way, $d(T_i)$ is identified with
a pair of integers $(p_i;q_i)$ such that
$\frac{p_i}{q_i} = \delta_{T_i}$,
$p_i > 0$, and $\gcd(p_i,q_i)$ equals $n_i$.
Choose a component $C_i$ of $\partial \Sigma$ with
$C_i \times \{0\} \subseteq T_i$.
Following Convention \ref{convention on filling multislopes}, 
set
\[c_i = \min \{ k \in \N_+ \mid \varphi^{k}(C_i) = C_i \}.\]
Let $J_i$ be the open interval in $\R \cup \{\infty\}$ between
$\frac{p_i}{q_i \pm c_i}$ not containing $\frac{p_i}{q_i}$.

Here we record a consequence for $p_i$, $q_i$, and $c_i$,
which will be used below.
Note that $d(T_i)$ is the union of $n_i$ parallel curves
of degeneracy slope.
After choosing consistent orientations on these curves,
$d(T_i)$ has algebraic intersection numbers $p_i$ and $q_i$
with the longitude and the meridian on $T_i$, respectively.
Thus there are $p_i$ boundary singularities $u_1,\ldots,u_{p_i}$ of $\Fs$ on $C_i$,
ordered cyclically along $C_i$, and $\varphi^{c_i}$ acts on
the set $\{u_1,\ldots,u_{p_i}\}$ by a shift by $q_i$; 
namely, after possibly reversing the cyclic order of $u_1,\ldots,u_{p_i}$,
$\varphi^{c_i}(u_j)=u_{j+q_i}$ with indices taken modulo $p_i$.
The number $p_i$ of boundary singularities 
is even and nonzero since $\Fs$ is co-orientable.
Since $\varphi$ reverses the co-orientation of $\Fs$, 
the return map $\varphi^{c_i}$ preserves the co-orientation if and only if 
$c_i$ is even. 
It follows that $q_i\equiv c_i \pmod 2$.

\begin{LEM}\label{lem: always pseudo-Anosov flow}
Under the assumptions of Definition \ref{def: degeneracy locus},
suppose further that $\Fs$ is co-orientable.
Let $i \in \{1,\ldots,r\}$ and let $s_i$ be a rational slope on $T_i$ with
$s_i \in J_i$.
Then $\Delta(s_i,d(T_i))\geqslant 2$.
\end{LEM}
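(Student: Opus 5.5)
We compute $\Delta(s_i,d(T_i))$ directly in the canonical coordinates of Subsection~\ref{subsec: convention}. Write $s_i = a/b$ with $\gcd(a,b)=1$. Since $d(T_i)$ is identified with the pair $(p_i;q_i)$, consisting of $n_i$ parallel curves of slope $\delta_{T_i}=p_i/q_i$, we have
\[
\Delta(s_i,d(T_i)) = n_i\,\Delta\bigl(a/b,\,p_i/q_i\bigr) = |a q_i - b p_i|.
\]
So the claim is that $|aq_i-bp_i|\geqslant 2$ for every rational $a/b\in J_i$. The plan is to rule out the values $0$ and $1$ separately.

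\textbf{The value $0$.} Here $|aq_i - bp_i| = 0$ means $a/b = p_i/q_i$, i.e. $s_i=\delta_{T_i}$. This is excluded, because $J_i$ is the open arc between $\frac{p_i}{q_i+c_i}$ and $\frac{p_i}{q_i-c_i}$ that does not contain $\frac{p_i}{q_i}$.

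\textbf{The value $1$.} Suppose $|aq_i-bp_i|=1$. Then $\gcd(p_i,q_i)=1$, so $n_i=1$.

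\emph{Parametrizing $\R P^1$.} Choose integers $(x,y)$ with $xq_i-yp_i=1$, and use the coordinate
\[
t=\frac{xb-ya}{aq_i-bp_i}
\]
on $\R P^1$. This is a M\"obius change of coordinates in $\mathrm{PSL}_2(\Z)$ sending $p_i/q_i$ to $\infty$. The slopes at distance $1$ from $\delta_{T_i}$ are exactly the integers $t$. The two endpoints $\frac{p_i}{q_i\pm c_i}$ are the points $(p_i,q_i\pm c_i)$; they have distance $c_i p_i\cdot(\text{unit})$ from $\delta_{T_i}$, and their $t$-coordinates should come out as $t_\pm=t_0\pm 1/(c_i p_i)$ up to sign. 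I would verify this by writing $(p_i,q_i\pm c_i)=(p_i,q_i)\pm c_i(0,1)$ and applying the coordinate change.

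\emph{Locating $J_i$.} The arc not containing $\infty$, i.e. not containing $p_i/q_i$, is then the interval between $t_-$ and $t_+$. Its length is $2/(c_ip_i)$. The key inequality is $p_i c_i\geqslant 2$: $p_i$ is even and nonzero because $\Fs$ is co-orientable, so $p_i\geqslant 2$ while $c_i\geqslant 1$. This makes the length at most $1$.

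\emph{Obstacle.} The main obstacle is showing that this open interval of length $\leqslant 1$ contains no integer $t$. That needs more than the length bound, namely control of its center $t_0$. I would check that the center is the image of the point $(0,1)$, the longitude $\lambda_{T_i}$. Its $t$-coordinate is $x/p_i$ up to sign, and here $x$ is prime to $p_i$ since $xq_i-yp_i=1$.

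\emph{Ruling out integers.}
\begin{itemize}
\item If $p_i\geqslant 3$, then $x/p_i$ is a non-integer with denominator $p_i$. The interval is centered there with radius $1/(c_ip_i)\leqslant 1/p_i$. The nearest integer lies at distance $\geqslant 1/p_i$, and the interval is open, so it contains no integer.
\item If $p_i=2$, then $q_i$ is odd because $n_i=1$, hence $c_i\equiv q_i$ is odd. If $c_i\geqslant 3$, the radius is $\leqslant 1/6<1/2$, so again no integer is contained. If $c_i=1$, the endpoints are exactly the half-integers at distance $1/2$ from the center. Since $J_i$ is open, it still excludes the integers there.
\end{itemize}

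In every case the value $1$ is impossible. Hence $\Delta(s_i,d(T_i))\geqslant 2$ for every rational slope $s_i\in J_i$.
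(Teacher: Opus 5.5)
Your argument is correct, but it uses a different coordinate on $\R P^1$ than the paper does.

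\textbf{The paper's route.} It uses $X = p_i/s$ (the map $f(X)=p_i/X$). In this coordinate $\delta_{T_i}$ sits at $X=q_i$, the endpoints of $J_i$ sit at $q_i\pm c_i$, and the complement of $J_i$ is $f([q_i-c_i,q_i+c_i])$. Write $s_i=a_i/b_i$ with $a_i\geqslant 0$. When $a_i>0$, the condition $|p_ib_i-q_ia_i|=1$ gives $|p_ib_i/a_i-q_i|=1/a_i\leqslant 1\leqslant c_i$, so $s_i\notin J_i$. The leftover case $a_i=0$ is the longitude, whose distance to $d(T_i)$ is $p_i\geqslant 2$.

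\textbf{Your route.} You reduce to $n_i=1$, pick a B\'ezout pair, and send $\delta_{T_i}$ to $\infty$. Distance-one slopes then become the integers, and $J_i$ becomes the open interval of radius $1/(c_ip_i)$ about the image of the longitude. Your predicted endpoint formula checks out: $t_\pm=-x/p_i\mp 1/(c_ip_i)$, with center $-x/p_i$.

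\textbf{Comparison.} The evenness of $p_i$ enters at the same spot in both proofs. It is exactly what stops the longitude, which always lies in $J_i$, from being at distance one. The paper's route is shorter: no reduction to $n_i=1$, no B\'ezout coefficients, and no need to locate the center. Yours gives a sharper picture. When $n_i=1$, slopes at distance $k$ correspond to $t=m/k$ in lowest terms, and lying in $J_i$ becomes the Diophantine condition $|mp_i+kx|<k/c_i$. This handles larger distances just as easily, e.g.\ the kind of $\Delta=2$ analysis in the remark after the lemma.

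\textbf{Small points.}
\begin{enumerate}
\item Your change of coordinates has determinant $yp_i-xq_i=-1$. So it lies in $\mathrm{PGL}_2(\Z)$ rather than $\mathrm{PSL}_2(\Z)$; this is harmless.
\item In the case $p_i=2$, $c_i=1$, the center is a half-integer and the endpoints $-x/2\pm 1/2$ are integers, not half-integers. Your conclusion that the open interval misses them still stands.
\item The split into $p_i\geqslant 3$ and $p_i=2$ is unnecessary, and so is the appeal to $c_i\equiv q_i \pmod 2$ (which rests on $\varphi$ reversing the co-orientation). Since $\gcd(x,p_i)=1$ and $p_i\geqslant 2$, any integer $t$ satisfies $|t+x/p_i|=|tp_i+x|/p_i\geqslant 1/p_i\geqslant 1/(c_ip_i)$. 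So your first bullet already covers $p_i=2$.
\end{enumerate}
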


\begin{proof}
The case $\Delta(s_i,d(T_i))=0$ occurs only when
$s_i=\delta_{T_i}=\frac{p_i}{q_i}$, which is not in $J_i$.
It remains to rule out the case $\Delta(s_i,d(T_i))=1$.
We argue by contrapositive:
suppose that $\Delta(s_i,d(T_i))=1$.
We will show that $s_i\notin J_i$.

Let $f:\R \to (\R \cup \{\infty\}) - \{0\}$ be the function defined by
\[f(x)=\frac{p_i}{x}, \quad x \in \R.\]
Then $f$ is a homeomorphism, and by definition
\[J_i = f\bigl(\R - [q_i-c_i, q_i+c_i]\bigr) \cup \{0\}.\]
Set $E_i = f([q_i-1,q_i+1])$.
Since $c_i \geqslant 1$, we have
$[q_i-1,q_i+1] \subseteq [q_i-c_i,q_i+c_i]$ and hence
\[E_i \subseteq (\R \cup \{\infty\}) - J_i.\]
Therefore it suffices to prove that $s_i \in E_i$.

Write $u_i = \frac{p_i}{n_i}$, $v_i = \frac{q_i}{n_i}$.
Then
$\delta_{T_i} = \frac{u_i}{v_i}$ and $\gcd(u_i,v_i)=1$.
Write $s_i = \frac{a_i}{b_i}$ with $a_i,b_i \in \Z$ and
$\gcd(a_i,b_i)=1$, and choose the representative so that $a_i \geqslant 0$.
Since $\Delta(s_i,d(T_i))=1$, we have
\[1 = n_i \left| \det
\begin{pmatrix}
u_i & a_i\\
v_i & b_i
\end{pmatrix} \right|
= \left| \det
\begin{pmatrix}
p_i & a_i\\
q_i & b_i
\end{pmatrix} \right|
= |p_i b_i - q_i a_i|.\]

We first show that $a_i \ne 0$.
Indeed, if $a_i=0$, then $\gcd(a_i,b_i)=1$ implies $|b_i|=1$, and so
\[1=\Delta(s_i,d(T_i)) = |p_i|.\]
This contradicts the fact that $p_i$ is a positive even number.

Thus $a_i>0$. From $|p_i b_i - q_i a_i| = 1$,
it follows that
\[-a_i \leqslant -1 \leqslant p_i b_i - q_i a_i \leqslant 1 \leqslant a_i,\]
and hence
\[(q_i-1)a_i \leqslant p_i b_i \leqslant (q_i+1)a_i.\]
Dividing by $a_i$, we obtain
$q_i-1 \leqslant \frac{p_i b_i}{a_i} \leqslant q_i+1$.
Therefore,
\[s_i=\frac{a_i}{b_i} = f(\frac{p_i b_i}{a_i}) \in f([q_i-1,q_i+1]) = E_i.\]
Since $E_i \subseteq (\R \cup \{\infty\}) - J_i$, we conclude that
$s_i \notin J_i$.
This proves the contrapositive, and hence the claim holds.
\end{proof}

Combining Lemma \ref{lem: always pseudo-Anosov flow} with 
Construction \ref{Fried},
we obtain the following consequence.

\begin{COR}\label{cor: always pA}
For each $i$, fix a slope $s_i \in J_i \cap (\Q \cup \{\infty\})$,
and let $\s$ denote the multislope $(s_1,\ldots,s_r)$.
Then the suspension flow $\psi$ of $M$ induces
a pseudo-Anosov flow on $M(\s)$.
\end{COR}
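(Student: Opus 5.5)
This is essentially a direct combination of Lemma~\ref{lem: always pseudo-Anosov flow} with Construction~\ref{Fried}. The plan is to check, boundary component by boundary component, that the hypothesis of Fried's construction holds, and then apply that construction to all the cusps at once.

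First, fix $i \in \{1,\ldots,r\}$. The standing assumptions of this section say that $\Fs$ is co-orientable, which is exactly the extra hypothesis required in Lemma~\ref{lem: always pseudo-Anosov flow}. Since $s_i \in J_i \cap (\Q \cup \{\infty\})$, the lemma gives $\Delta(s_i, d(T_i)) \geqslant 2$.

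One point needs care: the lemma is stated for a rational slope $s_i \in J_i$, so the case $s_i = \infty$ must also be covered. Its proof handles this case uniformly. If $s_i=\infty$, write $s_i = \frac{1}{0}$, so $a_i=1$ and $b_i=0$. Then $\Delta = n_i|p_i\cdot 0 - q_i \cdot 1|/n_i$, and the same determinant argument applies. Concretely, $\Delta(\infty,d(T_i)) = |q_i|$, and $\infty \in E_i$ exactly when $0 \in [q_i-1,q_i+1]$, that is, when $|q_i| \leqslant 1$. So the contrapositive argument still works and $\infty$ is not a problematic slope.

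With the inequality $\Delta(s_i,d(T_i)) \geqslant 2$ established for every $i$, the hypotheses of Construction~\ref{Fried} hold simultaneously on all of $T_1,\ldots,T_r$. The construction then produces a flow $\psi'$ on $N = M(\s)$. It is obtained by collapsing each filling solid torus $V_i$ to its core $\rho_i$ and extending $\psi|_{\Int(M)}$, and it is pseudo-Anosov. Each $\rho_i$ becomes an orbit with $\Delta(s_i,d(T_i)) \geqslant 2$ prongs: it is regular when the count is $2$ and singular when the count is at least $3$. This matches the requirements of Definition~\ref{topological pseudo-Anosov flow}.

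There is no real obstacle here. The only subtle point is the one already isolated in the lemma: ruling out $\Delta = 1$, which would create a $1$-prong orbit. That is where co-orientability of $\Fs$ enters, through the parity fact that $p_i$ is even and nonzero, together with the fact that $J_i$ avoids the region $E_i$ because $c_i \geqslant 1$.
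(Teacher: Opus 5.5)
Your proposal is correct and matches the paper, which obtains the corollary by combining Lemma~\ref{lem: always pseudo-Anosov flow} (giving $\Delta(s_i,d(T_i))\geqslant 2$ on every $T_i$) with Construction~\ref{Fried}. Your separate check of $s_i=\infty$ is accurate but not needed: the paper's ``rational slopes'' already include $\infty$, and the lemma's proof covers that case with $a_i=1$, $b_i=0$.
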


The resulting pseudo-Anosov flow on $M(\s)$
always contains singular orbits, except in the following special case.

\begin{rmk}
Suppose that $\Delta(s_i,d(T_i))=2$ for some $s_i\in J_i$.
We continue to write $s_i=\frac{a_i}{b_i}$ with $\gcd(a_i,b_i)=1$ and 
$a_i\geqslant 0$,
and use the same function
$f: \mathbb R\to (\mathbb R\cup\{\infty\})-\{0\}$ from the previous proof.
Below,
we consider separately the three cases $a_i\geqslant 2$, $a_i=1$, and $a_i=0$.

\begin{enumerate}
    \item 
We first assume $a_i \geqslant 2$.
Then
\[a_i \geqslant 2 = \Delta(s_i,d(T_i)) = |p_i b_i - q_i a_i|.\]
As in the proof of Lemma~\ref{lem: always pseudo-Anosov flow},
this also implies that
$q_i - 1 \leqslant \frac{p_i b_i}{a_i} \leqslant q_i + 1$,
and hence
$s_i = f(\frac{p_i b_i}{a_i}) \in f([q_i-1,q_i+1]) \subseteq 
(\R \cup \{\infty\}) - J_i$,
a contradiction.

\item 
Suppose that $a_i=1$.
Then $2=|p_i b_i-q_i a_i|=|p_i b_i-q_i|$,
which implies that $q_i - 2 \leqslant p_ib_i \leqslant q_i + 2$.
Since $2 \mid p_i$, the integer $q_i$ must be even.
This implies $2\mid c_i$, and so $c_i\geqslant 2$.
Hence $p_ib_i \in [q_i-2,q_i+2]\subseteq [q_i-c_i,q_i+c_i]$.
Therefore, since $a_i = 1$,
we have
\[s_i = f(p_i b_i)
   \in f([q_i-c_i,q_i+c_i])
   =(\R\cup\{\infty\})-J_i,\]
again a contradiction.

\item
Suppose that $a_i=0$.
Then $|b_i|=1$, and
$2=\Delta(s_i,d(T_i))=|p_i b_i|=p_i$.
Thus $p_i=2$, and by the convention in Subsection~\ref{subsec: convention},
we have $q_i=1$. 
Therefore, the only possibility is that $s_i = 0$ and $(p_i;q_i)=(2;1)$.
\end{enumerate}

Consequently, the pseudo-Anosov flow in $M(\s)$ induced by $\psi$
has no singular orbit only if case (3) occurs for every boundary component $T_i$ and
$\psi$ has no singular orbit in $\Int(M)$.
This means that $\Fs$ has no interior singularities,
every boundary component of $M$ has degeneracy locus $(2;1)$,
and the restriction of $\s$ to each boundary component is the longitude.
By the Euler-Poincar\'e formula,
this occurs only when the fibered surface has genus one.
\end{rmk}

Finally, we record the canonical form of 
the construction of $\F_\alpha(\s)$ used in this paper.

\begin{prop}\label{prop: construction from the new branched surface}
Let $\alpha = (\alpha_1,\ldots,\alpha_n)$ be an admissible system of arcs
with respect to $(\Sigma,\varphi)$,
and let $B_\alpha$ be the branched surface given in
Construction \ref{new branched surface}.
Let $s_i \in J_i \cap (\Q \cup \{\infty\})$
for $1 \leqslant i \leqslant r$,
and let $\s = (s_1,\ldots,s_r)$.

\textnormal{(a)}
The branched surface $B_\alpha$ fully carries an essential lamination $\Ll_\alpha$
that extends to a foliation $\F_\alpha$ transverse to $\psi$,
which intersects each $T_i$ in simple closed curves of slope $s_i$.

\textnormal{(b)}
There exists a pseudo-Anosov flow $\phi$ on $M(\s)$
obtained from Fried's surgery on the suspension flow $\psi$ of $M$,
and $\F_\alpha$ extends to a co-orientable taut foliation
$\F_\alpha(\s)$ of $M(\s)$ transverse to $\phi$.
\end{prop}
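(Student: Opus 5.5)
For part (a), I would invoke Proposition~\ref{prop: B_alpha}. It gives an essential lamination $\Ll_\alpha$ fully carried by $B_\alpha$ that meets each $T_i$ in parallel curves of slope $s_i$, together with an extension to a foliation $\F_\alpha$ of $M$ whose restriction to each $T_i$ is a foliation by curves of slope $s_i$. The work is to arrange that $\F_\alpha$ is transverse to $\psi$.

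On $N(B_\alpha)$ transversality is automatic. Every interval fiber of $N(B_\alpha)$ lies in an orbit of $\psi$, since both $\Sigma\times\{0\}$ and the modified product disks $D(\alpha_i)$ are transverse to $\psi$. So every leaf of $\Ll_\alpha$, being transverse to the interval fibers, is transverse to $\psi$.

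Next come the complementary regions of $N(B_\alpha)$. The branched surface contains the fiber $\Sigma\times\{0\}$, so each region is a piece of $\Sigma\times I$ cut along the product disks. Its horizontal boundary lies in $\partial_h N(B_\alpha)$, and its vertical boundary consists of segments of $\psi$-orbits, namely $\partial_v N(B_\alpha)$ together with annuli or rectangles in $\partial M$. Hence each region is a product $R\times I$ in which the $I$-direction is given by flow segments. The extension in \cite[Subsection~3.5]{Z26} fills such regions with product foliations, or with foliated $I$-bundles in the gaps between leaves of $\Ll_\alpha$. I would choose these fillings to be transverse to the $I$-fibers, which are the flow segments. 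Near $\partial M$ the foliation restricts to curves of slope $s_i$. Since $s_i\ne\delta_{T_i}$, these curves can be isotoped to be transverse to $\psi|_{T_i}$ away from the degeneracy locus. I have not checked that this is compatible along the closed orbits in $T_i$; I plan to handle it as for (b), using that $\psi$ is tangent to $T_i$.

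For part (b), Corollary~\ref{cor: always pA} (via Lemma~\ref{lem: always pseudo-Anosov flow} and Construction~\ref{Fried}) already gives the pseudo-Anosov flow $\phi$ on $M(\s)$, obtained by collapsing each filling solid torus $V_i$ to its core $\rho_i$. What remains is to extend $\F_\alpha$ across each $V_i$ transversely to $\phi$ and to obtain co-orientability and tautness.

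\begin{itemize}
\item \emph{Extension across $V_i$.} Since $\F_\alpha|_{T_i}$ consists of curves of slope $s_i$, the foliation extends over $V_i$ by meridian disks. Under Fried's blow-down, these disks become disks transverse to $\rho_i$. The local model of $\phi$ near $\rho_i$ is a $\Delta(s_i,d(T_i))$-prong orbit, and the $s_i$-curves near $T_i$ are transverse to $\psi$. So a meridian disk can be chosen as a transverse section of $\phi$ near the prong orbit.
\item \emph{Co-orientability.} This is inherited from the co-orientation of $B_\alpha$.
\item \emph{Tautness.} This follows because $\F_\alpha(\s)$ is transverse to a flow with no closed orbit bounding a disk; equivalently, it is the taut foliation of Theorem~\ref{thm: construction from branched surface}.
\end{itemize}

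The main obstacle is the transversality near the boundary tori, in part (a) along the closed orbits on $T_i$ and in part (b) across $V_i$. There one must check that the foliation of $T_i$ by $s_i$-curves, together with its meridian-disk filling, is compatible with the prong structure of $\phi$ at $\rho_i$. I would first try to construct the Dehn filling directly in Fried's model: blow up $\rho_i$ into $T_i$ and take the disk foliation in polar coordinates transverse to the radial flow structure.
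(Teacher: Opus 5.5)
Your proposal follows the paper's proof essentially step for step. The lamination and its extension come from Proposition~\ref{prop: B_alpha}. Transversality on $N(B_\alpha)$ comes from the interval fibers lying in $\psi$-orbits, and the complementary regions are products whose $I$-fibers are flow segments, filled transversally. Part (b) is Corollary~\ref{cor: always pA} plus an extension over the filling tori.

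\emph{Tautness.} Your first justification is not valid. A foliation transverse to a flow with no null-homotopic closed orbit need not be taut. For example, $\mathcal{R}\times S^1$ on $T^3$, with $\mathcal{R}$ a Reeb foliation of $T^2$, is transverse to a flow whose closed orbits all wind around the $S^1$ factor, yet its torus leaves have no closed transversal. Tautness must instead be inherited from Theorem~\ref{thm: construction from branched surface} via Proposition~\ref{prop: B_alpha}, as the paper does; drop the other reason.

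\emph{The boundary issue in (a).} The obstacle you flag is already settled by your own product argument. On $T_i$, both $N(B_\alpha)\cap T_i$ and the complementary pieces are fibered by segments of $\psi|_{T_i}$. Hence the circles of $\F_\alpha\cap T_i$ are transverse to $\psi|_{T_i}$ everywhere, including along the closed orbits, and no isotopy is needed.

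\emph{The extension in (b).} Choose the circles collapsed in Fried's construction to be exactly the leaves of $\F_\alpha\cap T_i$. This is legitimate because they are closed transversals to $\psi|_{T_i}$ of slope $s_i\neq\delta_{T_i}$. Then it is the product collar of $\F_\alpha$ along $T_i$ that caps off to disks each meeting $\rho_i$ once; the meridian disks of $V_i$ do not, since they collapse to points of $\rho_i$. These disks are transverse to $\phi$ because $\psi|_{T_i}$ descends to the flow along $\rho_i$. This is the content of the paper's one-line assertion that transversality is preserved.
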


\begin{proof}
As in Proposition \ref{prop: B_alpha},
Theorem \ref{thm: construction from branched surface} applies with
$B_\alpha$ in place of $B'_\alpha$.
Thus $B_\alpha$ fully carries an essential lamination $\Ll_\alpha$
which intersects each $T_i$ in a union of parallel simple closed curves
of slope $s_i$.
Moreover, $\Ll_\alpha$ extends to a foliation $\F_\alpha$ of $M$,
and $\F_\alpha$ further extends to a co-orientable taut foliation
$\F_\alpha(\s)$ of $M(\s)$.

Recall that we have fixed a fibered neighborhood $N(B_\alpha)$ of $B_\alpha$ such that
every interval fiber of $N(B_\alpha)$ lies in an orbit of $\psi$,
and every lamination carried by $B_\alpha$ is transverse to the interval
fibers of $N(B_\alpha)$.
Since $\Ll_\alpha$ is carried by $B_\alpha$, it is transverse to the
interval fibers of $N(B_\alpha)$ and therefore transverse to $\psi$.
Note that each complementary region of $N(B_\alpha)$ in $M$ is homeomorphic to
a surface bundle over an interval, with the surface fibers transverse to
$\psi$.
Hence the extension of $\Ll_\alpha$ through the complementary regions
can be chosen so that the resulting foliation $\F_\alpha$ 
is transverse to $\psi$.

By Corollary~\ref{cor: always pA}, Fried's surgery produces 
a pseudo-Anosov flow $\phi$ on $M(\s)$ from the suspension flow $\psi$ on $M$.
Since each slope $s_i$ is distinct from the degeneracy slope on $T_i$,
the foliation $\F_\alpha$ extends over the filling solid tori to
$\F_\alpha(\s)$ in such a way that the transversality to the flow is
preserved.
Therefore $\F_\alpha(\s)$ is transverse to $\phi$.
\end{proof}

\section{Producing $\R$-covered foliations}\label{sec: R-covered}

We work under the setting of Theorem~\ref{foliation}.
Let $\Sigma$ be a compact orientable surface,
$\varphi: \Sigma \to\Sigma$
an orientation-preserving pseudo-Anosov homeomorphism with
invariant foliations $\Fs, \Fu$,
and 
\[M=\Sigma \times I / \stackrel{\varphi}{\sim}\]
the mapping torus of $\varphi$ with boundary components $T_1,\ldots,T_r$.
We assume that $\Fs$ is co-orientable and $\varphi$ reverses its co-orientation.
For each $i$ let $J_i$ be the interval of slopes on $T_i$
specified in Theorem~\ref{foliation}.
Let $\psi$ be the suspension flow of $\varphi$ on $M$.

We prove Theorem \ref{main} (a) in this section.
 
\begin{main (a)}
For each $1 \leqslant i \leqslant r$, choose a slope
$s_i \in J_i \cap (\Q \cup \{\infty\})$,
and let $\s=(s_1,\ldots,s_r)$ be the corresponding multislope.
Then there exists an admissible system of arcs
$\alpha^*$ for $(\Sigma,\varphi)$
such that the induced foliation $\F_{\alpha^*}(\s)$ is $\R$-covered.
\end{main (a)}

We fix a co-orientation on $\Fs$.
We then choose a continuously-varying leafwise orientation on $\Fs$
so that it points from the right side of any positively oriented
transversal of $\Fs$ to the left side.
Accordingly, we endow $\Fu$ with the co-orientation
compatible with the chosen leafwise orientation on $\Fs$.

\subsection{A refined admissible system of arcs}\label{subsec: refined system}

In this subsection,
we construct an admissible system of arcs 
$\alpha = (\alpha_1,\ldots,\alpha_n)$ on $\Sigma$
with the additional property that
each $\alpha_i$ is simultaneously transverse to $\Fs$ and $\Fu$.

Each prong of $\Fu$ is naturally parametrized by $[0,+\infty)$,
with the parameter starting at a boundary singularity.
Let
\[\beta'_1,\ldots,\beta'_n \colon [0,+\infty) \to \Sigma\]
denote the prongs of $\Fu$ whose parametrizations are consistent with
the co-orientation on $\Fs$, and let
\[\gamma'_1,\ldots,\gamma'_n \colon [0,+\infty) \to \Sigma\]
denote those whose parametrizations are opposite to
the co-orientation on $\Fs$.
We first construct a properly embedded arc on $\Sigma$ from
$\beta'_i(0)$ to $\gamma'_i(0)$ by adding a segment joining them
that is positively transverse to $\Fu$.

\begin{figure}
	\centering
	\subfigure[]{
	\includegraphics[width=0.45\textwidth]{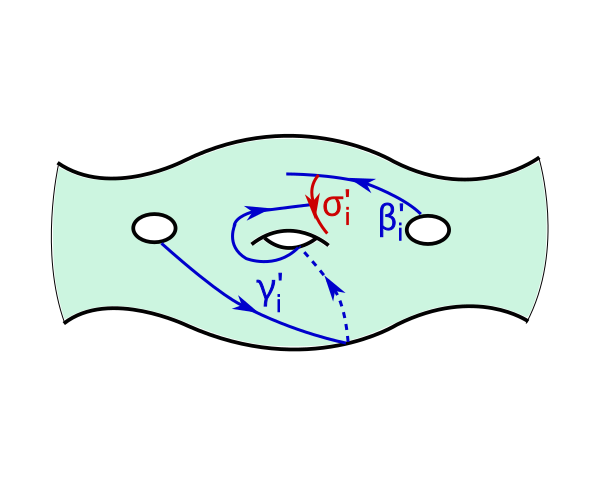}}
\subfigure[]{
	\includegraphics[width=0.45\textwidth]{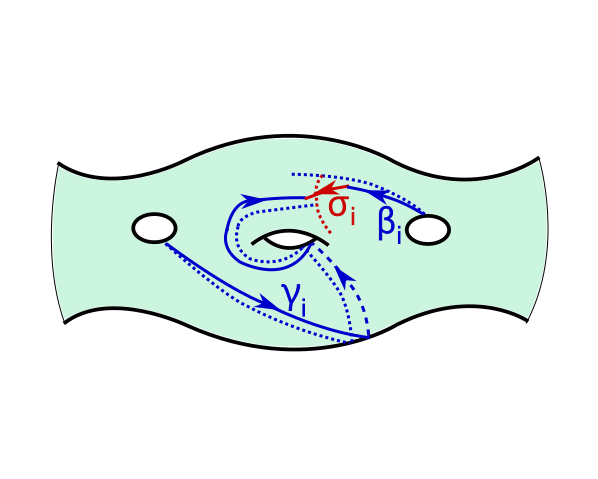}}
	\caption{(a) shows the path $\sigma'_i$ 
    joining the rays $\beta'_i$ and $\gamma'_i$.
(b) shows how these segments are modified to $\beta_i$, $\gamma_i$, $\sigma_i$;
the dashed curves represent $\beta'_i$, $\gamma'_i$, $\sigma'_i$.
    }\label{fig: refined arcs}
\end{figure}

\begin{cons}\rm
For each $1 \leqslant i \leqslant n$, we choose $r_i \in \R_+$ and a segment
$\sigma'_i \colon [0,1] \to \Sigma$
such that 
\begin{enumerate}[(i)]
    \item $\sigma'_i(0) = \beta'_i(r_i)$, and
$\sigma'_i([0,1])$ is disjoint from $\beta'_i([0,r_i])$
except at the point $\sigma'_i(0)$.    

    \item The image $\sigma'_i([0,1])$ is contained in a single leaf of $\Fs$ and 
is disjoint from all singularities of $\Fs$, 

    \item The increasing orientation on $[0,1]$ 
is compatible with the co-orientation on $\Fu$ via $\sigma'_i$.
\end{enumerate}
Since $\gamma'_i$ has dense image in $\Sigma$, 
there exists $t_i \in \R_+$ such that
$\gamma'_i(t_i) \in \sigma'_i([0,1])$ and 
$\gamma'_i([0,t_i)) \cap \sigma'_i([0,1]) = \emptyset$.
Let $w_i \in [0,1]$ be such that
$\sigma'_i(w_i) = \gamma'_i(t_i)$.
Note that $w_i \ne 0$, since $\beta'_i$ and $\gamma'_i$ 
are distinct half-leaves of $\Fu$.
Define a broken path
\[\alpha''_i = \beta'_i\mid_{[0,r_i]} * 
\sigma'_i\mid_{[0,w_i]} * 
\overline{\gamma'_i}\mid_{[0,t_i]},\]
where $*$ denotes concatenation of paths and 
$\overline{\gamma'_i}$ denotes $\gamma'_i$ with reversed orientation.
Note that $\alpha''_i$ is an embedding, since 
$\gamma'_i([0,t_i)) \cap \sigma'_i([0,1]) = \emptyset$.
\end{cons}

Since $\beta'_i$ is positively transverse to $\Fs$ and 
$\gamma'_i$ is negatively transverse to $\Fs$, 
the orientations on $\beta'_i, \gamma'_i$ induced by 
the increasing parameterization determine 
opposite normal directions along $\sigma'_i$. 
See Figure \ref{fig: refined arcs} (a) for an illustration.

We now isotope each $\alpha''_i$ as follows
to obtain an embedded path $\alpha'_i$
that is positively transverse to both $\Fs$ and $\Fu$,
and then smooth all intersections among $\alpha'_1, \ldots, \alpha'_n$ 
to obtain a collection of disjoint arcs $\alpha_1,\ldots,\alpha_n$.

\begin{figure}
	\centering
	\subfigure[]{
	\includegraphics[width=0.45\textwidth]{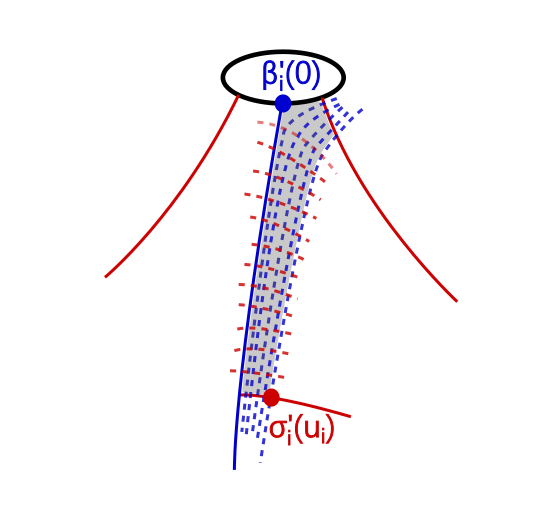}}
\subfigure[]{
	\includegraphics[width=0.45\textwidth]{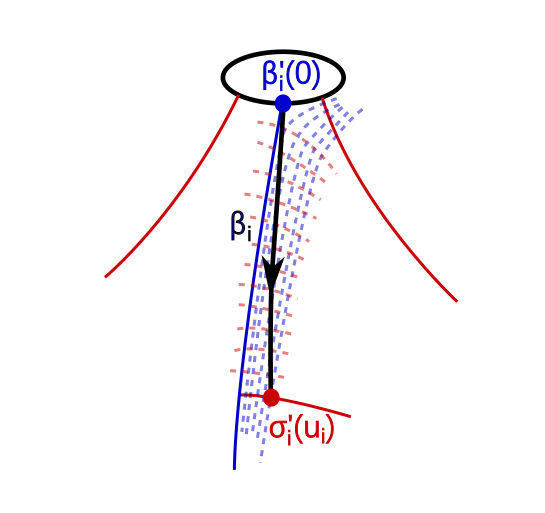}}
	\caption{In (a), for $u_i \in (0,w_i)$ sufficiently small,
the points $\beta'_i(0)$ and $\sigma'_i(u_i)$
lie in the closure of an open product chart
$U \cong (0,1) \times (0,1)$ for $(\Fs,\Fu)$.
As shown in (b), a path $\beta_i \subseteq U$
joining $\beta'_i(0)$ to $\sigma'_i(u_i)$
is positively transverse to both $\Fs$ and $\Fu$.
}\label{fig: open product chart}
\end{figure}

\begin{cons}\rm
For each $1 \leqslant i \leqslant n$, we first modify the three paths
$\beta'_i, \gamma'_i, \sigma'_i$ as follows
to make them positively transverse to both $\Fs$ and $\Fu$.

\begin{enumerate}
\item
Since $\beta'_i([0,r_i])$ is disjoint from the interior singularities of $\Fs$,
there exists $u_i \in (0,w_i)$ such that the points
$\beta'_i(0)$ and $\sigma'_i(u_i)$ lie in the closure of
some open product chart of $(\Fs,\Fu)$ (see Figure \ref{fig: open product chart} (a)).
Hence they can be joined by a path
\[\beta_i \colon [0,1] \to \Sigma\]
with $\beta_i(0)=\beta'_i(0)$ and $\beta_i(1)=\sigma'_i(u_i)$,
which is positively transverse to both $\Fs$ and $\Fu$
(see Figure \ref{fig: open product chart} (b)).

\item
Similarly, there exists $v_i \in (u_i,w_i)$ such that the points
$\gamma'_i(0)$ and $\sigma'_i(v_i)$ can be joined by a path
\[\gamma_i \colon [0,1] \to \Sigma\]
from $\gamma'_i(0)$ to $\sigma'_i(v_i)$ which is negatively transverse
to both $\Fs$ and $\Fu$.

\item
Since the segment $\sigma'_i([u_i,v_i])$ is disjoint from the singularities of $\Fs$,
there exists an open product chart $U$ of $(\Fs,\Fu)$ containing this segment.
Hence there exist $\epsilon_1,\epsilon_2 \in (0,1)$ sufficiently small such that
\[\beta_i([1-\epsilon_1,1]) \subseteq U,
\qquad
\gamma_i([1-\epsilon_2,1]) \subseteq U.\]
Thus there exists a path
\[\sigma_i: [0,1] \to \Sigma\]
from $\beta_i(1-\epsilon_1)$ to $\gamma_i(1-\epsilon_2)$
which is positively transverse to both $\Fs$ and $\Fu$.
\end{enumerate}
Define
\[\alpha'_i
= \beta_i\mid_{[0,1-\epsilon_1]} * 
\sigma_i * 
\overline{\gamma_i}\mid_{[0,1-\epsilon_2]}.\]
Then $\alpha'_i$ is an embedded path positively transverse to both $\Fs$ and $\Fu$;
see Figure~\ref{fig: refined arcs} (b)
for the modification of $\alpha''_i$ to $\alpha'_i$.
Note that each $\alpha'_i$ remains disjoint from the singularities of $\Fs$.
By constructing the paths $\alpha'_i$ successively,
we can ensure that they intersect only in finitely many transverse double points.
At each intersection point of $\alpha'_i$ and $\alpha'_j$ for distinct $i,j$,
we smooth the crossing in a manner consistent with the orientations of the paths
(see Figure \ref{fig: smooth}).
This produces a collection of $n$ disjoint oriented properly embedded arcs,
together with a (possibly empty) collection of oriented circles.
Since $\alpha'_1,\ldots,\alpha'_n$ are positively transverse to both $\F^s$ and $\F^u$,
every resulting arc and circle remains positively transverse to
both $\F^s$ and $\F^u$.
Discarding the circle components,
we denote the remaining $n$ properly embedded arcs by
$\alpha_1,\ldots,\alpha_n$.
Finally,
we slightly isotope each $\alpha_i$ near its endpoints so that 
$\partial \alpha_i$ is disjoint from the boundary singularities of $\Fs$ and $\Fu$,
$\partial \alpha_i \cap \varphi(\partial \alpha_j) = \emptyset$
for all distinct $i,j$,
and each $\alpha_i$ remains positively transverse to both $\Fs$ and $\Fu$.
\end{cons}


\begin{figure}
	\centering
	\subfigure[]{
	\includegraphics[width=0.3\textwidth]{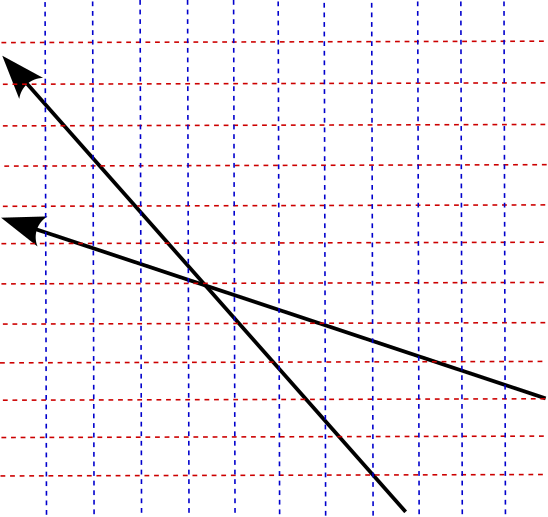}}
\subfigure[]{
	\includegraphics[width=0.3\textwidth]{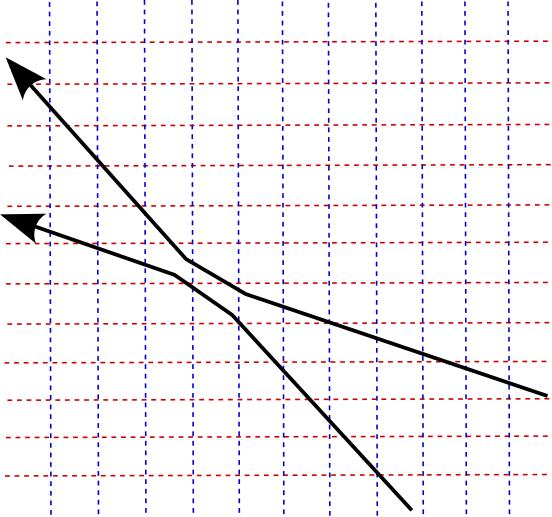}}
	\caption{Smoothing the double intersection of two paths
positively transverse to both $\Fs$ and $\Fu$.}\label{fig: smooth}
\end{figure}

Now the collection $\alpha_1,\ldots,\alpha_n$ satisfies Definition~\ref{system of arcs};
let $\alpha$ be the admissible system of arcs $(\alpha_1,\ldots,\alpha_n)$.
Since the endpoints of each $\alpha_i$ have been isotoped to be disjoint from
the boundary singularities of $\Fs$ and $\Fu$,
we may perturb $\alpha_i$ slightly to a parallel arc $\alpha^+_i$ on its right side,
so that $\alpha^+_i$ satisfies Construction~\ref{new branched surface} and
is also positively transverse to $\Fu$.
As in Construction~\ref{new branched surface},
a branched surface $B_\alpha$
is constructed from the admissible system of arcs $\alpha$ by
adding product disks $D(\alpha_i)$ 
with lower arc $\alpha_i \times \{0\}$ and
upper arc $\varphi(\alpha^+_i) \times \{1\}$.
As in Subsection~\ref{subsec: perturb}, 
we may isotope each $D(\alpha_i)$ to be transverse to the suspension flow $\psi$.
Proceeding as in 
Proposition~\ref{prop: construction from the new branched surface} (a),
we obtain a foliation $\F_\alpha$ in $M$ intersecting
each $T_i$ in simple closed curves of slope $s_i$,
which is transverse to $\psi$.

\subsection{The intersection behavior of $\F_\alpha$ and the weak stable foliation}\label{subsec: F and Fws}

We write the weak stable and weak unstable foliations 
$\Fws(\psi)$ and $\Fwu(\psi)$ of $\psi$ simply as $\Fws$ and $\Fwu$.
Let $p: \w M \to M$ be the universal cover of $M$.
We denote by
$\w{\F_\alpha}, \w \psi, \w{\Fws}, \w{\Fwu}$
the lifts to $\w M$ of
$\F_\alpha, \psi, \Fws, \Fwu$,
respectively.

In this subsection,
we analyze the intersections between $\w{\F_\alpha}$ and $\w \Fws$
and prove the following proposition.

\begin{prop}\label{prop: l and Fws transverse}
Let $l$ be a regular leaf or a half-leaf of $\w{\Fws}$.
Let $x \in l$ be an arbitrary point,
and let $\lambda$ be a leaf of $\w{\F_\alpha}$ that intersects $l$.
Then the orbit $\w \psi(x)$ intersects $\lambda$ exactly once.
\end{prop}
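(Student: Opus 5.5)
The plan is to work inside a single leaf $l$ and prove two things separately: the orbit $\w\psi(x)$ meets $\lambda$ \emph{at most once}, and it meets $\lambda$ \emph{at least once}. Let $\ell\subseteq\Sigma$ be the leaf (or prong) of $\Fs$ whose suspension contains $p(l)$. Then $l$ is a plane (respectively a half-plane) with product coordinates $(y,t)$, where $y$ parametrizes a lift of $\ell$ using the leafwise orientation of $\Fs$ fixed above, and $t$ is the flow time. The orbits of $\w\psi$ in $l$ are the vertical lines $\{y\}\times\R$.

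Since $\F_\alpha$ is transverse to $\psi$ (Proposition~\ref{prop: construction from the new branched surface}(a)), $\lambda\cap l$ is a $1$-manifold transverse to these vertical lines. Each component of $\lambda\cap l$ is therefore locally the graph of a continuous function $t=h(y)$ over an open interval of $y$-values.

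\emph{At most once.} I would use that $\F_\alpha$ extends to the taut foliation $\F_\alpha(\s)$ of $M(\s)$, which is transverse to the pseudo-Anosov flow $\phi$ (Proposition~\ref{prop: construction from the new branched surface}(b)). A flowline of the lift of $\phi$ to the universal cover of $M(\s)$ is a transversal to the lifted taut foliation. By Novikov's theorem it meets each leaf at most once. Next I pass from $\w M$ to this cover. The lamination $\Ll_\alpha$ is essential, so $\pi_1$ of each leaf and of $M$ injects appropriately. The components of the lift of $M$ to the universal cover of $M(\s)$ are therefore covered by $\w M$, and an orbit of $\w\psi$ meeting $\lambda$ twice would project to a $\phi$-orbit meeting a single lifted leaf twice. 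Alternatively, I could argue directly in $\w M$ that a transversal arc starting and ending on the same leaf yields a closed transversal bounding a disk, contradicting essentiality.

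\emph{At least once.} This is the main obstacle. The point is that $\lambda\cap l$ could, a priori, be a graph over a proper subinterval of $y$-values, with $h(y)\to\pm\infty$ at a finite endpoint. I would analyse it through the branched surface. The set $B_\alpha\cap l$ is a train track in $l$, made of two kinds of pieces:
\begin{itemize}
\item horizontal lines, which are the lifts of $\ell\times\{0\}$ at integer heights, reparametrized by $\varphi$ at each level;
\item short arcs coming from the product disks $D(\alpha_i)$.
\end{itemize}
Each product-disk arc is a graph over a closed segment $B_i\cap\ell$, using that every leaf of $\Fs$ meets each band $B_i$ in a closed segment. The curve $\lambda\cap l$ is carried by this train track and lies in the $I$-fibred neighbourhood whose fibres are flow segments.

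Two features should control the shape of the carried curve:
\begin{itemize}
\item \emph{Monotone jumps.} The arcs $\alpha_i$ are positively transverse to $\Fs$, the cusp of each lower arc points left, and $\varphi$ reverses the co-orientation. From this I expect that every switch the curve meets while $y$ increases forces it to change level in the same direction, so $h$ is monotone in the level sense.
\item \emph{Local finiteness.} Over any compact $y$-interval, only finitely many product-disk arcs lie at each level.
\end{itemize}
Then I would show that $h$ cannot blow up at a finite $y_0$. Blowing up would require infinitely many level changes over a compact $y$-interval. Going forward in time, the interval $\varphi^k(\ell\text{-segment})$ contracts, so eventually it meets the bands in a bounded pattern. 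Going backward, I would rule this out using that the leaves of $\F_\alpha$ in the complementary product regions are sections of the flow there. If this count fails, I would first try the symmetric argument with the co-orientation reversed, and then exclude the remaining divergence using the essentiality of $\Ll_\alpha$.

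Finally, $\lambda\cap l$ is a single component. Any two components would, by the existence part, both be graphs over all $y$, so they would meet a common orbit, and this contradicts the at-most-once statement. The half-leaf case is identical, with $y\in[0,\infty)$ and the boundary orbit included.
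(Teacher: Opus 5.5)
\paragraph{Verdict.} The proposal has a genuine gap: the existence half (the orbit meets $\lambda$ at least once) is missing its key idea.

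\paragraph{What is fine.} Your uniqueness argument is sound, and it fills in a step the paper only asserts. You map $\w M$ into the universal cover of $M(\s)$, where orbits of the lifted pseudo-Anosov flow are transversals to the lifted taut foliation and so meet each leaf at most once. The deduction that $\lambda\cap l$ is connected is also fine.

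\paragraph{The gap.} The existence part is the real content of the proposition, and the mechanism you expect is not the one that operates. It is false that every switch met while $y$ increases forces a level change in a fixed direction. The missing idea is a rigidity property, and it is exactly where the hypothesis that $\varphi$ reverses the co-orientation enters.
\begin{enumerate}
\item Orient each $\rho_k=(\w\Sigma\times\{k\})\cap l$ positively transverse to $\w\Fu\times\{k\}$. Because $\varphi$ reverses the co-orientation of $\Fs$, these orientations alternate with $k$ relative to your coordinate $y$.
\item The arcs $\alpha_i$ are positively transverse to $\Fs$, the arcs $\varphi(\alpha_i^+)$ are negatively transverse, and the cusps point left at lower arcs and right at upper arcs. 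From this one checks that the cusp direction at every switch on $\rho_k$, whether it comes from a lower or an upper arc, agrees with the orientation of $\rho_k$.
\item Hence on the levels of one parity all cusps point toward $+y$, and on the levels of the other parity all cusps point toward $-y$. Traversing a level in its cusp direction, a carried curve can only merge into that level, never branch off it.
\item Followed in the $+y$ direction, a carried curve therefore never leaves an even level once it is on one. On an odd level, it must have been there for all smaller $y$.
\item So the curve meets at most two adjacent levels and makes at most one jump, from the odd level to the even one, which may go up or down.
\item It thus lies in a horizontal strip of bounded height. A properly embedded line transverse to the vertical orbits in such a strip is a graph over all $y$ (over all $y\geqslant 0$ for a half-leaf), hence it meets every orbit.
\end{enumerate}

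\paragraph{Why your blow-up exclusion does not close.} Forward in time, $\varphi$ contracts stable segments, so switches become sparse in $y$ and a counting argument works, as you note. Backward in time, $\varphi^{-1}$ expands the segments, so as $k\to-\infty$ the switches on $\rho_k$ become dense in $y$ and the product-disk arcs become short in $y$. Local finiteness then gives no bound on the number of jumps over a compact $y$-interval. None of your suggested remedies supplies the missing constraint: sections of the flow in the complementary regions, reversing the co-orientation, and essentiality of $\Ll_\alpha$ all fall short. Only the parity and cusp-direction trapping above does.

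\paragraph{A smaller point.} A leaf of $\w{\F_\alpha}$ lying in a complementary region of the lamination is not literally carried by $N(\tau_l)$. The paper handles this by enlarging the fibered neighbourhood of $\tau_l$ non-equivariantly. This works because the complementary regions of $\tau_l$ in $l$ are product bigons whose fibers are flow segments. After that, the same trapping argument applies.
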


The proof relies only on the property that each $\alpha_i$
is positively transverse to $\Fs$.
The condition that each $\alpha_i$ is positively transverse to $\Fu$
is not used here and yields a symmetric statement.
See Remark \ref{rmk: only Fs} and 
Proposition \ref{prop: l and Fwu transverse} for details.


The projection of $M = (\Sigma \times I)/ \stackrel{\varphi}{\sim}$ 
onto the second coordinate induces a fibration
$M \to S^1$ with fiber $\Sigma$.
We fix the orientation on the base $S^1$ so that it agrees with the increasing orientation of the second coordinate on $\Sigma \times I$.
This fibration lifts to a fibration of $\w M$ over $\R$, 
whose fibers are copies of the universal cover $\w \Sigma$ of $\Sigma$.
Under this identification, 
the $\w \Sigma$-fibers of $\w M$ are indexed by
\[\w \Sigma \times \{t\}, \quad t \in \R,\]
so that $p(\w \Sigma \times \{t\}) = \Sigma \times \{0\}$
when $t \in \Z$, and the increasing orientation on the base $\R$ agrees with
the chosen orientation on the base $S^1$ of $M$.

\begin{figure}
	\centering
	\subfigure[]{
	\includegraphics[width=0.45\textwidth]{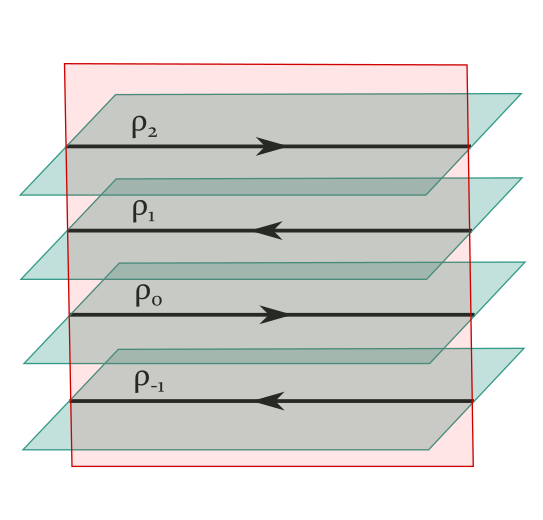}}
    \subfigure[]{
	\includegraphics[width=0.45\textwidth]{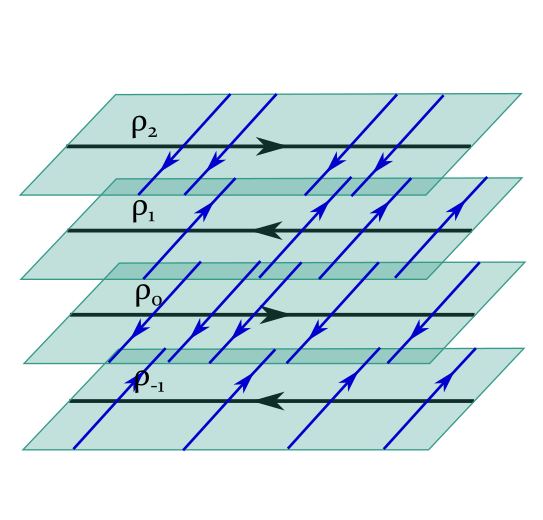}}
    \subfigure[]{
	\includegraphics[width=0.45\textwidth]{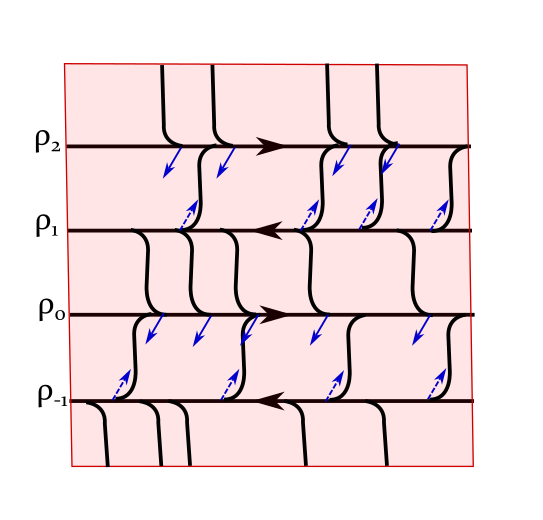}}   
    \subfigure[]{
	\includegraphics[width=0.45\textwidth]{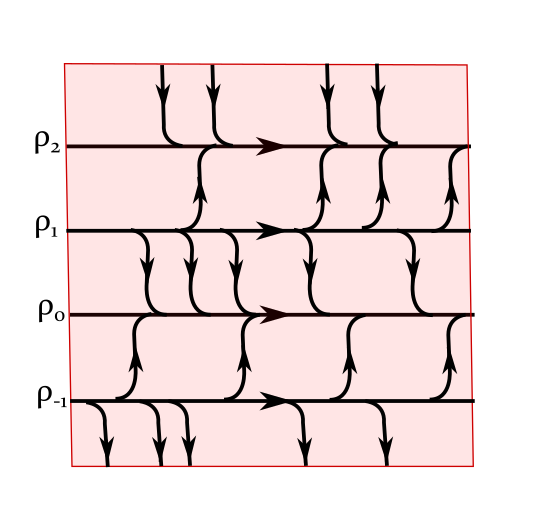}}    
    \caption{(a) describes the orientation on each $\rho_k$ induced by
the leafwise orientation on $\w \Fs \times \{k\}$,
with orientation alternating as $k$ varies.
(b) describes the intersections between $\w \alpha \times \{k\}$ and $\rho_k$,
where the blue arcs represent $\w \alpha \times \{k\}$
and the arrows indicate their orientations,
which give alternating transverse orientations of $\rho_k$.
(c) is a picture of the train track $\tau_l$,
where the arrows indicate the normal orientations induced by
the arcs $\w \alpha \times \{k\}$.
(d) describes the orientation on the train track $\tau_l$.
}\label{fig: intersection train track to Fws}
\end{figure}

Let $l$ be a regular leaf or a half-leaf of a singular leaf of $\w{\Fws}$, and let
\[\rho_k = (\w \Sigma \times \{k\}) \cap l, \quad k \in \Z.\]
We orient each $\rho_k$ so that it is positively transverse to 
$\w \Fu \times \{k\}$. 
Since $\varphi$ is co-orientation-reversing,
it sends positively oriented transversals of $\Fs$ on $\Sigma$
to negatively oriented ones.
Hence the orientation of $\rho_k$ is opposite to the orientation of $\rho_{k+1}$.
This means that, if we choose a continuous orientation for the family
$\{l \cap (\w \Sigma \times \{t\}) \mid t \in \R\}$,
then one of $\rho_k$ and $\rho_{k+1}$ agrees with this orientation,
while the other is oppositely oriented.
See Figure~\ref{fig: intersection train track to Fws} (a).

We fix a co-orientation on $l$ which agrees with
the co-orientation of $\rho_0$ in $\w \Sigma \times \{0\}$ 
induced by $\w \Fs \times \{0\}$.
Since $\varphi$ reverses the co-orientation of $\Fs$,
for each $k$ the co-orientation of $\rho_k$ induced by $\w \Fs \times \{k\}$
is opposite to that of $\rho_{k+1}$ induced by $\w \Fs \times \{k+1\}$.
Hence the co-orientation of $\rho_k$ induced by $\w \Fs \times \{k\}$
agrees with the co-orientation on $l$ if and only if $2 \mid k$.
It follows that the pullbacks of $\alpha_1,\ldots,\alpha_n$ to $\w M$ 
are positively transverse to $l$ when they are contained in
$\w \Sigma \times \{k\}$ with $2 \mid k$, and negatively transverse to $l$
when they are contained in $\w \Sigma \times \{k\}$ with $2 \nmid k$.
See Figure~\ref{fig: intersection train track to Fws} (b).

Let $\w{B_\alpha}$ be the pullback of the branched surface $B_\alpha$ to $\w M$,
and let $\tau_l$ be the train track given by $l \cap \w{B_\alpha}$.
Now consider a product disk $D(\alpha_i)$ of $B_\alpha$ induced by $\alpha_i$.
Let $\w{D}$ be a lift of $D(\alpha_i)$ to $\w M$, 
and suppose that $\w{D}$ intersects $l$.
Let $k \in \Z$ be such that $\w{D}$ intersects both $\rho_k$ and $\rho_{k+1}$,
and let $\eta^-$ and $\eta^+$ denote the arcs
\[\w{D} \cap (\w \Sigma \times \{k\}), \quad
\w{D} \cap (\w \Sigma \times \{k+1\}),\]
respectively.
Since $\eta^\pm$ are lifts of the lower arc
$\alpha_i \times \{0\}$ and the upper arc $\varphi(\alpha^+_i) \times \{0\}$
of $D(\alpha_i)$ to $\w M$,
we associate $\eta^\pm$ with the orientations induced by
$\alpha_i$ and $\varphi(\alpha^+_i)$, respectively.
As the arcs $\alpha_i, \alpha^+_i$ are transverse to $\Fs$
and the product disk $D(\alpha_i)$ is transverse to $\psi$,
$\eta^-$ intersects $\rho_k$ exactly once,
$\eta^+$ intersects $\rho_{k+1}$ exactly once,
and that $\w D$ intersects $l$ in a connected arc.
Recall that the cusp direction at the lower (resp.\ upper) arc of $D(\alpha_i)$ 
points to the left (resp.\ right) in $\Sigma \times \{0\}$.
Moreover, as noted at the beginning of this section, 
since $\rho_k$ is positively transverse to $\w \Fu \times \{k\}$, 
its orientation points to the left of 
every positively oriented transversal of $\w \Fs \times \{k\}$ that it meets.
Since $\eta^-$ is positively transverse to $\w \Fs \times \{k\}$, 
the cusp direction of $\tau_l$ at $\eta^- \cap \rho_k$ 
agrees with the orientation of $\rho_k$.
On the other hand, 
the orientations of $\rho_k$ and $\rho_{k+1}$ are opposite,
and the cusp directions of $\tau_l$ at 
$\eta^- \cap \rho_k$ and $\eta^+ \cap \rho_{k+1}$
also point toward opposite sides of $\rho_k$ and $\rho_{k+1}$, respectively.
Hence the cusp direction at $\eta^+ \cap \rho_{k+1}$ 
still agrees with the orientation of $\rho_{k+1}$.
See Figure~\ref{fig: intersection train track to Fws} (c) for an example.

Let $\w{N(B_\alpha)}$ denote the pullback of $N(B_\alpha)$ to $\w M$,
and let $N(\tau_l) = \w{N(B_\alpha)} \cap l$.
As in the previous subsection,
each interval fiber of $N(\tau_l)$ is contained in an orbit of $\w \psi$
and is transverse to $\w \F$.
Thus $N(\tau_l)$ inherits a natural $I$-bundle structure from $\w{N(B_\alpha)}$.
Let $\pi_l \colon N(\tau_l) \to \tau_l$ denote the projection that
collapses each interval fiber to a single point.
Throughout, whenever a curve on $l$ is said to be carried by $\tau_l$,
we assume that it is transverse to the interval fibers of $N(\tau_l)$.

\begin{LEM}\label{lem: intersects at most two}
Suppose that a real line $\eta \colon \R \to l$ is carried by $\tau_l$.
Then $\pi_l(\eta(\R))$ intersects at most two components of
$\bigcup_{k \in \Z} \rho_k$,
and intersects $\w \psi(x)$ for every $x \in l$.
\end{LEM}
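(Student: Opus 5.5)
Let me set up coordinates on $l$ first. The leaf $l$ is a union of flow lines of $\w\psi$; each orbit meets every slice $\w\Sigma\times\{t\}$ exactly once. So $l$ is homeomorphic to $\R\times\R$, with the first coordinate a parameter along $\rho_0$ and the second the time $t$. The orbits of $\w\psi$ are the vertical lines, and $\rho_k$ is the horizontal line at height $k$. The train track $\tau_l$ is the union of the $\rho_k$ together with the arcs $\w D\cap l$, one for each lift $\w D$ of a product disk meeting $l$. Each such arc runs from a point of $\rho_k$ to a point of $\rho_{k+1}$ and is transverse to the vertical direction, since the product disks were isotoped to be transverse to $\psi$.

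\textbf{Key cusp computation.} The preceding paragraph fixes the cusp directions. At both endpoints $\eta^-\cap\rho_k$ and $\eta^+\cap\rho_{k+1}$, the cusp direction agrees with the orientation of the corresponding $\rho$. Smoothness of $\tau_l$ at a switch then determines the allowed turns. Let $e$ be a branch lying in the strip between $\rho_k$ and $\rho_{k+1}$.
\begin{itemize}
\item At the lower switch, a carried path can pass between $e$ and $\rho_k$ only if it travels along $\rho_k$ \emph{against} the orientation of $\rho_k$.
\item At the upper switch, the same holds with $\rho_{k+1}$ in place of $\rho_k$.
\end{itemize}
The orientations of $\rho_k$ and $\rho_{k+1}$ are opposite. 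In the horizontal coordinate, the two constraints therefore say the following. Suppose the path enters $e$ from $\rho_k$ travelling in the horizontal direction $d$. Then it exits onto $\rho_{k+1}$ travelling in direction $d$ as well. But $d$ is the orientation direction of $\rho_{k+1}$, and the upper-switch constraint forbids travelling along $\rho_{k+1}$ in that direction after passing through the switch from $e$. So the path cannot continue onto $\rho_{k+1}$ at all, a contradiction. Hence a carried path that enters the strip from $\rho_k$ and reaches $\rho_{k+1}$ can never leave $\rho_{k+1}$ through another product-disk branch going up, and it cannot go back down either.

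\textbf{First assertion.} I would make the above precise by tracking a monotonicity quantity along $\eta$: the pair (current level $k$, horizontal direction of travel relative to the orientation of $\rho_k$). Each transition through a branch $e$ changes the level by $\pm1$. The cusp constraints force this relative direction to flip from ``against'' to ``along''. After one transition the path travels along $\rho_{k\pm1}$ \emph{with} its orientation, and a second transition would require travelling against it. So along all of $\eta(\R)$ at most one transition occurs, and $\pi_l(\eta(\R))$ meets at most two of the $\rho_k$. This is the step I expect to be the main obstacle. The orientation bookkeeping must be done carefully for the four cases: going up or down, and entering or exiting at a lower or upper arc. I would verify it directly against Figure~\ref{fig: intersection train track to Fws}~(c),(d).

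\textbf{Second assertion.} After the first assertion, $\eta$ consists of a ray carried along $\rho_k$, possibly one branch $e$, and then a ray carried along $\rho_{k\pm1}$, or just a single line in one $\rho_k$. A carried curve is transverse to the interval fibers, which lie in orbits. So its horizontal coordinate is strictly monotone, and the horizontal direction is preserved across $e$. It remains to show that each ray is proper, i.e.\ that the horizontal coordinate tends to $\pm\infty$ at the two ends of $\eta$. A carried path cannot stall on a compact piece of $\tau_l$ while remaining on a single $\rho_k$ and moving monotonically. If only finitely many branches are crossed, the tail of $\eta$ lies in $N(\rho_k)$ over an unbounded piece of $\rho_k$, because $\eta$ is a properly parametrized line, $\tau_l$ is locally finite, and each branch segment has compact fibered neighborhood. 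The projection of $\eta$ to the horizontal line is therefore a homeomorphism onto $\R$. Every vertical line $\w\psi(x)$, $x\in l$, is then met by $\pi_l(\eta(\R))$, which proves that $\pi_l(\eta(\R))$ intersects $\w\psi(x)$ for every $x\in l$.
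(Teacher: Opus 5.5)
The overall plan matches the paper's, but the paragraph \textbf{Key cusp computation} is wrong as written and contradicts your own next paragraph. The switch rule you state (``can pass between $e$ and $\rho_k$ only if it travels along $\rho_k$ against the orientation'') changes when the path is reversed. It is correct for a path passing \emph{from} $\rho_k$ \emph{into} $e$. This is what the paper uses: a path moving along $\rho_k$ in the cusp direction, which is the orientation of $\rho_k$, can never leave $\rho_k$. Reversing such a path shows the opposite for a path passing \emph{from} $e$ \emph{onto} $\rho_{k+1}$: it then moves \emph{along} the cusp direction of $\rho_{k+1}$, i.e.\ along its orientation. At the upper switch you are in this second case, and you applied the ``against'' rule there. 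Since the horizontal direction $d$ is preserved across $e$ and equals the orientation of $\rho_{k+1}$, the exit is allowed and there is no contradiction. With your stated rules no train path could ever cross a branch. Yet every branch $e$ lies on the smooth path formed by a ray of $\rho_k$, then $e$, then a ray of $\rho_{k+1}$, and your \textbf{First assertion} paragraph itself allows one crossing. That paragraph has the correct bookkeeping: leaving a level requires moving against its orientation, arrival puts you moving along the new level's orientation, and moving along the cusp direction you can never leave. This is exactly the paper's proof. The paper orients $\tau_l$ by the horizontal direction (agreeing with $\rho_k$ for even $k$, opposite for odd $k$, edges oriented from odd to even levels). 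It then notes that $\gamma([y,+\infty))\subseteq\rho_k$ when $\gamma(y)$ lies on an even level, and $\gamma((-\infty,z])\subseteq\rho_k$ when $\gamma(z)$ lies on an odd level. Delete the spurious contradiction and the first assertion goes through.

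For the second assertion your route differs slightly from the paper's. You use that a carried curve is transverse to the interval fibers, hence to the orbits, so the orbit of $\w\psi$ it meets varies strictly monotonically. Properness of $\eta$ then sends the two tails to opposite ends, so the projection to the orbit space $\rho_0\cong\R$ of $l$ is onto. The paper instead asserts that $\gamma(\R)$ separates $\bigcup_{k<j-1}\rho_k$ from $\bigcup_{k>j+1}\rho_k$ in $l$. Since every orbit in $l$ meets every $\rho_k$, each orbit must then cross $\gamma(\R)$. Both arguments need $\eta$ to be properly embedded. This is not in the statement, but it holds for $\eta=\lambda\cap l$ as used in Proposition~\ref{prop: l and Fws transverse}. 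Your monotonicity version makes this dependence explicit, and it is the same mechanism encoded in the paper's orientation of $\tau_l$. The paper leaves its separation claim as ``not hard to observe''.
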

\begin{proof}
Let $\tau_l$ be oriented such that
\begin{enumerate}
    \item The induced orientation of $\tau_l$ on $\rho_k$ is consistent with 
the previously chosen orientation on $\rho_k$ if $2 \mid k$ and 
is opposite to it if $2 \nmid k$.
Note that 
the family $\{(\w \Sigma \times \{t\}) \cap l \mid t \in \R\}$ admits 
a continuous orientation consistent with 
the induced orientation of $\tau_l$ on every $\rho_k$.

    \item For each edge $e$ of $\tau_l$ with endpoints on $\rho_k$ and $\rho_{k+1}$,
the orientation on $e$
goes from $e \cap \rho_k$ to $e \cap \rho_{k+1}$ if $2 \nmid k$ and 
goes from $e \cap \rho_{k+1}$ to $e \cap \rho_k$ if $2 \mid k$.
\end{enumerate}
This determines a well-defined orientation of $\tau_l$; 
see Figure \ref{fig: intersection train track to Fws} (d).

Let $\gamma = \pi_l \circ \eta$. 
We first suppose that $\gamma(\R)$ meets $\rho_k$ for some $k \in \Z$ with $2 \mid k$.
Let $y \in \R$ be such that $\gamma(y) \in \rho_k$.
Since the cusp direction at any triple point of $\tau_l$ on $\rho_k$
agrees with the orientation on $\rho_k$ chosen previously,
it agrees with the orientation on $\tau_l$,
so we must have
\[\gamma([y,+\infty)) \subseteq \rho_k.\]
Similarly, if $\gamma(z) \in \rho_k$ for 
some $z \in \R$ and $k \in \Z$ with $2 \nmid k$,
then the orientation on $\tau_l$ is opposite to 
the previously chosen orientation on $\rho_k$,
and hence is opposite to the cusp direction at all triple points of $\tau_l$ on $\rho_k$,
so
\[\gamma((-\infty,z]) \subseteq \rho_k.\]

Therefore, the image $\gamma(\R)$ can meet at most one component of
$\bigcup_{k \in 2\Z} \rho_k$
and at most one component of
$\bigcup_{k \in \Z - 2\Z} \rho_k$.
It follows that $\gamma(\R)$ meets at most two components of
$\bigcup_{k \in \Z} \rho_k$.

Let $j \in \Z$ be such that $\gamma(\R) \cap \rho_j \ne \emptyset$.
It's not hard to observe that $\gamma(\R)$ 
separates $\bigcup_{k \in \Z, k < j-1} \rho_k$ from 
$\bigcup_{k \in \Z, k > j+1} \rho_k$ in $l$.
Since $\psi$ is the suspension flow of $\varphi$,
each orbit of $\w \psi$ contained in $l$
intersects every $\rho_k$ exactly once.
It follows that $\gamma(\R)$ intersects all orbits of $\w \psi$ contained in $l$.
\end{proof}

We are now ready to prove Proposition~\ref{prop: l and Fws transverse}.

\begin{proof}[Proof of Proposition~\ref{prop: l and Fws transverse}]
Let $\lambda$ be a leaf of $\w{\F_\alpha}$ that intersects $l$,
and let $x \in l$.
If $\lambda \cap l \subseteq N(\tau_l)$,
then Lemma~\ref{lem: intersects at most two} implies that
$\pi_l(\lambda \cap l)$ intersects $\w \psi(x)$,
and hence $\pi_l(y) \in \w \psi(x)$ for some point $y \in \lambda \cap l$.
Since each interval fiber of $N(\tau_l)$ is contained in an orbit of $\w \psi$,
it follows that $y \in \w \psi(x)$.

Now suppose that $\lambda \cap l \nsubseteq N(\tau_l)$.
Since $l$ is transverse to $\w \psi$ and
$\tau_l$ has only bigon complementary regions,
we may enlarge the fibered neighborhood $N(\tau_l)$ to
a (possibly not $\pi_1$-equivariant) fibered neighborhood $N'(\tau_l)$ of $\tau_l$
such that every interval fiber is still contained in some orbit of $\w \psi$,
and $\lambda \cap l$ is contained in $N'(\tau_l)$ and transverse to
its interval fibers.
It still follows from Lemma~\ref{lem: intersects at most two} that
the image of $\lambda \cap l$ under the associated collapsing map
$N'(\tau_l) \to \tau_l$ intersects all orbits of $\w \psi$ contained in $l$,
and hence intersects $\w \psi(x)$.
As in the previous case,
there exists $y \in \lambda \cap l$ such that
the collapsing map sends $y$ to a point in $\w \psi(x)$.
This implies that $y \in \w \psi(x)$.

Thus, $\w \psi(x)$ intersects $\lambda$ nontrivially.
Since $\psi$ is transverse to $\F_\alpha$,
$\w \psi$ is transverse to $\w{\F_\alpha}$,
and hence $\w \psi(x)$ intersects $\lambda$ exactly once.
This completes the proof of Proposition~\ref{prop: l and Fws transverse}.
\end{proof}

\begin{rmk}\label{rmk: only Fs}
In the preceding discussion of this subsection,
we use only the positive transversality of $\alpha_i, \alpha^+_i$ to $\Fs$,
and not the additional condition from Subsection~\ref{subsec: refined system}
that $\alpha_i, \alpha^+_i$ are also positively transverse to $\Fu$.
Hence, if the admissible system of arcs $\alpha$ 
does not satisfy this additional condition,
the argument of Proposition~\ref{prop: l and Fws transverse} still applies.
\end{rmk}

Since $\alpha_i, \alpha^+_i$ are also positively transverse to $\Fu$,
we obtain the analogous statement:

\begin{prop}\label{prop: l and Fwu transverse}
Let $l'$ be a regular leaf or a half-leaf of $\w{\Fwu}$.
Let $y \in l'$,
and let $\lambda$ be a leaf of $\w{\F_\alpha}$ that intersects $l'$.
Then the orbit $\w \psi(y)$ intersects $\lambda$ exactly once.
\end{prop}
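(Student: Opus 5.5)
Our plan is to repeat the argument for Proposition~\ref{prop: l and Fws transverse} with the roles of $\Fs$ and $\Fu$ exchanged, using that each $\alpha_i$ and $\alpha^+_i$ is also positively transverse to $\Fu$. Let $l'$ be a regular leaf or a half-leaf of $\w{\Fwu}$. Set $\rho'_k = (\w\Sigma\times\{k\})\cap l'$ for $k\in\Z$. Orient each $\rho'_k$ by the leafwise orientation of $\w\Fu\times\{k\}$, or equivalently so that it is positively transverse to $\w\Fs\times\{k\}$. We first check how these orientations behave under $\varphi$. Since $\varphi$ reverses the co-orientation of $\Fs$, it also reverses that of $\Fu$, because the co-orientations of $\Fs$ and $\Fu$ are tied by the orientation of $\Sigma$, which $\varphi$ preserves. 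Hence the orientations of $\rho'_k$ and $\rho'_{k+1}$ are opposite with respect to a continuous orientation of the family $\{l'\cap(\w\Sigma\times\{t\})\}_{t\in\R}$. Likewise, the lifts of the $\alpha_i$ cross $l'$ positively in even levels and negatively in odd levels.

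Next we form the train track $\tau'_{l'} = l'\cap\w{B_\alpha}$ with fibered neighborhood $N(\tau'_{l'}) = \w{N(B_\alpha)}\cap l'$, whose interval fibers lie in orbits of $\w\psi$. Each lift $\w D$ of a product disk meets $l'$ in one arc. That arc joins $\eta^-\cap\rho'_k$ to $\eta^+\cap\rho'_{k+1}$, because $\alpha_i$ and $\varphi(\alpha^+_i)$ are transverse to $\Fu$ and $\w D$ is transverse to $\w\psi$.

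The key step, and the one we expect to need the most care, is to determine the cusp directions of $\tau'_{l'}$ along each $\rho'_k$. The cusp at the lower arc points to the left of $\alpha_i$ in $\Sigma\times\{0\}$, and the cusp at the upper arc points to the right. We must compare these with the orientation of $\rho'_k$, using the convention that $\rho'_k$ crosses a positive transversal of $\Fu$ from one fixed side. The claim we need is that, along each fixed $\rho'_k$, all cusp directions point the same way. This holds at the lower arcs because every $\eta^-$ is positively transverse to $\w\Fu\times\{k\}$. At the upper arcs, $\varphi(\alpha^+_i)$ is negatively transverse to $\Fu$ and the cusp side is reversed, so the two sign changes cancel exactly as in the $\Fs$ case. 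The common direction may be opposite to the one in the stable case. That does not matter: we only need consistency on each $\rho'_k$ together with alternation in $k$ relative to the continuous orientation.

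With this in hand, we orient $\tau'_{l'}$ as in Lemma~\ref{lem: intersects at most two}, possibly with every orientation reversed. We then rerun that lemma's argument. A line $\eta$ carried by $\tau'_{l'}$ has projection that can enter $\rho'_k$ for $k$ of one parity only forward, and for $k$ of the other parity only backward. So the projection meets at most two of the $\rho'_k$, separates $l'$, and therefore meets every orbit of $\w\psi$ in $l'$, since each such orbit hits every $\rho'_k$ exactly once.

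Finally we conclude as in the proof of Proposition~\ref{prop: l and Fws transverse}. Suppose $\lambda\cap l'\not\subseteq N(\tau'_{l'})$. Because the complementary regions of $\tau'_{l'}$ are bigons, we enlarge the neighborhood to a fibered neighborhood $N'(\tau'_{l'})$ that still has fibers in orbits and contains $\lambda\cap l'$. The collapsed image of $\lambda\cap l'$ then meets $\w\psi(y)$, so $\lambda$ meets $\w\psi(y)$. Since $\w\psi$ is transverse to $\w{\F_\alpha}$, this intersection is unique.
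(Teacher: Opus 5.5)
Your proposal is correct and matches the paper's approach: the paper gives no separate proof, only stating that the argument for Proposition~\ref{prop: l and Fws transverse} carries over with $\Fs$ and $\Fu$ exchanged, since $\alpha_i$ and $\alpha^+_i$ are also positively transverse to $\Fu$. Your explicit check that along each $\rho'_k$ the cusp directions are consistent (all opposite to the orientation positively transverse to $\Fs$) and alternate with $k$ relative to the continuous orientation is precisely the detail that justifies this symmetry.
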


\subsection{Verifying the foliation $\F_\alpha(\s)$ is $\R$-covered}\label{subsec: verify R-covered}

By Proposition \ref{prop: construction from the new branched surface} (b),
the foliation $\F_\alpha$ extends to 
a co-orientable taut foliation $\F_\alpha(\s)$ in $M(\s)$,
and Fried's surgery produces a pseudo-Anosov flow $\phi$ on $M(\s)$ from $\psi$,
which is transverse to $\F_\alpha(\s)$.
Let $W$ denote the Dehn filling $M(\s)$,
and let $\F$ denote $\F_\alpha(\s)$.
We write $\Ews = \Fws(\phi)$ and $\Ewu = \Fwu(\phi)$ for
the weak stable and unstable foliations of $\phi$, respectively.
Let $\w W$ be the universal cover of $W$,
and denote by $\w \F, \w \phi, \w \Ews, \w \Ewu$
the pullbacks of $\F, \phi, \Ews, \Ewu$ to $\w W$, respectively.

\begin{prop}\label{regulating}
For any leaf $\lambda$ of $\w \F$ and any point $x \in \w W$,
the orbit $\w \phi(x)$ has nonempty intersection with $\lambda$.
\end{prop}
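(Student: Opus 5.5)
We prove that every leaf $\lambda$ of $\w\F$ is met by every orbit of $\w\phi$, i.e. that $\phi$ is regulating for $\F$. The strategy is to transfer Propositions~\ref{prop: l and Fws transverse} and \ref{prop: l and Fwu transverse} from $M$ to $W$, and then to use the product structure of the orbit space $\Or(\phi)\cong\R^2$. Since $\w\phi$ is transverse to $\w\F$, each orbit meets $\lambda$ at most once. Let $U_\lambda\subseteq\Or(\phi)$ be the set of orbits meeting $\lambda$. Then $U_\lambda$ is open, because transversality lets us flow a small disk of $\lambda$. It suffices to show $U_\lambda$ is closed, hence all of $\Or(\phi)$.

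\textbf{Step 1: saturation along leaves.} Take a leaf (regular or half-leaf) $\mu$ of $\w\Ews$ or $\w\Ewu$ that contains an orbit meeting $\lambda$. We claim every orbit in $\mu$ meets $\lambda$. Remove the lifts of the filling cores $\rho_i$; the complement is a cover of $\Int(M)$. On this cover, $\phi$ agrees with $\psi$, $\F$ agrees with $\F_\alpha$, and the leaves of $\Ews,\Ewu$ are unions of leaves and half-leaves of $\Fws,\Fwu$. Apply Proposition~\ref{prop: l and Fws transverse} (resp. \ref{prop: l and Fwu transverse}) to the intermediate cover of $M$ instead of $\w M$. Their proofs go through verbatim: they only use the train track $\tau_l$ on a leaf and the separation of the $\rho_k$, and both are local to the leaf's universal cover. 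This shows that all orbits in the half-leaf piece through our orbit meet $\lambda$.

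Adjacent pieces of $\mu$ share an orbit, either a lift of a core $\rho_i$ or a closed orbit on $\partial M$ that becomes part of the same leaf after collapsing. That shared orbit then meets $\lambda$. Propagating across all pieces covers the whole leaf $\mu$, which is a connected union of half-leaves. The main delicacy here is handling the singular orbits coming from the cores $\rho_i$. We treat a core orbit as the common boundary of the half-leaves through it, and we use that $\F$ is transverse to $\phi$ near the filling tori, which is Proposition~\ref{prop: construction from the new branched surface}(b).

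\textbf{Step 2: product structure in $\Or(\phi)$.} Let $y$ be any point of $\Or(\phi)$ and $x\in U_\lambda$. In the orbit space of a pseudo-Anosov flow, any two points can be joined by a finite chain of arcs, each contained in a leaf of $\Ors$ or $\Oru$. To see this, note that the set of points reachable by such chains from $x$ is open, because of local product charts and prong charts at singular points. Its complement is also open by the same local argument, so by connectedness of $\Or(\phi)\cong\R^2$ every point is reachable. Applying Step 1 along each arc of the chain from $x$ to $y$ shows $y\in U_\lambda$. This proves the proposition, and in fact does so directly without needing the openness/closedness argument.

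\textbf{Expected obstacle.} The main obstacle is Step 1 near singular orbits and cores. Specifically, we must verify that the conclusions of Propositions~\ref{prop: l and Fws transverse} and \ref{prop: l and Fwu transverse}, which are stated on $\w M$, hold on the non-simply-connected cover $p^{-1}(\Int M)\subseteq \w W$. We also must check that passing through a collapsed core orbit keeps the intersection with $\lambda$. For the first point, we argue leaf by leaf: each half-leaf in $\w W$ lifts homeomorphically to a half-leaf of $\w{\Fws}$ or $\w{\Fwu}$, and so does the restriction of $\lambda$ to a neighborhood of it.
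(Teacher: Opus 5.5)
Your proposal is correct and follows essentially the paper's route. You connect the two orbits by a finite chain of arcs in leaves of $\Ors(\phi)$ and $\Oru(\phi)$. You then lift each relevant regular leaf or half-leaf of $\w{\Ews}$, $\w{\Ewu}$ (away from the lifted cores) homeomorphically to $\w M$, and propagate the intersection with $\lambda$ using Propositions~\ref{prop: l and Fws transverse} and~\ref{prop: l and Fwu transverse}. The only real difference is that you saturate whole leaves across singular and core orbits, whereas the paper subdivides the chain so that each open sub-arc lies in a single regular leaf or half-leaf.

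One small point: crossing a core orbit needs the implication that if the half-leaf's orbits meet $\lambda$, then the core orbit meets $\lambda$. That comes from applying the proposition in $\w M$ to the boundary orbit of the closed half-leaf on $\partial \w M$, which maps onto the lifted core; this is the paper's identification of the closed orbits in the boundaries. Transversality of $\F$ near the core, which you cite, only gives the converse implication, from ``the core meets $\lambda$'' to ``each adjacent half-leaf meets $\lambda$''.
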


\begin{proof}
Let \[o: \w W \to \Or(\phi)\]
be the canonical projection sending each orbit of $\w \phi$
to a point in the orbit space $\Or(\phi)$.
Choose a point $y \in \lambda$.
There exists a broken path
\[\sigma = \sigma_1 * \sigma_2 * \ldots * \sigma_k : I \to \Or(\phi)\]
with $\sigma(0) = o(y)$ and $\sigma(1) = o(x)$,
where each $\sigma_i$ is contained in a leaf of
$\Ors(\phi)$ or $\Oru(\phi)$.

Since $W$ is obtained from Fried's surgery on $M$,
the filling solid tori are collapsed to
a finite union of closed orbits of $\phi$ in $W$;
denote this union by $C$, and denote by $\w C$ its pullback to $\w W$.
Note that both the singularities of $\Or(\phi)$ and
the set $o(\w C)$ are discrete in $\Or(\phi)$.
Thus, after possibly subdividing each $\sigma_i$,
we may assume that for every $1 \leqslant i \leqslant k$,
the interior $\Int(\sigma_i)$ contains no singularity of $\Ors(\phi)$
and no point of $o(\w C)$.
It follows that there exists a regular leaf or half-leaf
$l_i$ of $\w \Ews$ or $\w \Ewu$
such that $\sigma_i \subseteq o(l_i)$ and
$\Int(l_i) \subseteq \w W - \w C$.
We regard $\Int(\w M)$ as the universal cover of $\w W - \w C$.
If $l_i$ is a regular leaf of $\w \Ews$ or $\w \Ewu$,
then $l_i$ lifts homeomorphically to some leaf $l'_i$ of $\w \Fws$ or $\w \Fwu$;
if $l_i$ is a half-leaf of $\w \Ews$ or $\w \Ewu$,
then $\Int(l_i)$ lifts homeomorphically to $\Int(l'_i)$ for 
some half-leaf $l'_i$ of $\w \Fws$ or $\w \Fwu$.
In the latter case,
the half-leaves $l_i$ and $l'_i$ can be canonically identified
by identifying the closed orbits in their boundaries.

By Propositions~\ref{prop: l and Fws transverse}
and~\ref{prop: l and Fwu transverse},
every orbit of $\w \phi$ contained in $l_i$
intersects every leaf of $\w \F$ that intersects $l_i$.
Since $\sigma_1$ connects $o(y)$ to a point in
$\sigma_2 \subseteq o(l_2)$,
it follows that $\lambda$ intersects every orbit
contained in $l_2$.
Proceeding inductively along
$\sigma_2, \ldots, \sigma_k$,
we conclude that $\lambda$ intersects every orbit
contained in $l_k$.
In particular, $\lambda$ intersects $\w \phi(x)$.
\end{proof}

As an immediate consequence of Proposition~\ref{regulating},
we obtain the following.

\begin{prop}
The foliation $\F$ is $\R$-covered in $W$.
\end{prop}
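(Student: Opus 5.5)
The plan is to show that every orbit of $\w \phi$ meets every leaf of $\w \F$ exactly once, and then to identify $L(\F)$ with a single orbit of $\w\phi$, which is homeomorphic to $\R$.

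First, fix $x \in \w W$ and let $\Lambda = \w\phi(x)$. Since $\phi$ is transverse to $\F$, the lifted orbit $\Lambda$ is transverse to $\w \F$. Because $\F$ is taut, the leaves of $\w\F$ are properly embedded planes in $\w W$, and each separates $\w W$. Hence a transversal meets each leaf at most once: if it met a leaf twice, we could close it up by an arc in the leaf and push off to obtain a closed loop transverse to $\w\F$ in the simply connected $\w W$. This contradicts the standard Novikov-type fact that $\w\F$ has no closed transversals. Combined with Proposition~\ref{regulating}, the orbit $\Lambda$ meets every leaf of $\w\F$ exactly once.

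Next, define $h \colon \Lambda \to L(\F)$ by sending a point to the leaf of $\w\F$ containing it. By the previous step, $h$ is a bijection. It is continuous, since it is the composition of the inclusion $\Lambda \hookrightarrow \w W$ with the quotient map $\w W \to L(\F)$. To see that $h$ is open, take an open interval $U \subseteq \Lambda$ and consider the saturation of $U$ by $\w\F$. A foliation chart of $\w\F$ around any point of $U$ shows that the leaves through $U$ form an open set in $L(\F)$. Transversality of $\Lambda$ to $\w\F$ gives this locally near each point of $U$, and it then holds globally because $h$ is injective. Therefore $h$ is a homeomorphism, $L(\F) \cong \Lambda \cong \R$, and $\F$ is $\R$-covered.

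The main subtle point is the openness step, together with the claim that leaves of $\w \F$ meet a transversal only once. Both are standard consequences of tautness: Novikov's theorem gives that leaves are $\pi_1$-injective and that $\w\F$ has no closed transversals. The remaining content is supplied entirely by Proposition~\ref{regulating}.
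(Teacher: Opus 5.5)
Your proof is correct and follows the paper's argument: Proposition~\ref{regulating} shows that a single orbit $\widetilde{\phi}(x)$ meets every leaf of $\widetilde{\F}$, and $L(\F)$ is then identified with that orbit, which is homeomorphic to $\R$. The paper treats the ``exactly once'' and homeomorphism steps as immediate from transversality; you justify them explicitly (tautness rules out closed transversals in the universal cover $\widetilde W$, and saturations of transverse arcs are open), which fills in those steps without changing the route.
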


\begin{proof}
Let $x \in \w W$.
By Proposition~\ref{regulating},
$\w \phi(x)$ intersects every leaf of $\w \F$.
Since $\w \phi$ is transverse to $\w \F$,
each leaf of $\w \F$ intersects $\w \phi(x)$ exactly once.
Therefore the leaf space $L(\F)$ can be canonically identified with
$\w \phi(x) \cong \R$,
and so $L(\F) \cong \R$.
\end{proof}

The proof of Theorem \ref{main} can be completed by
taking $\alpha^* = \alpha$.



\section{Foliations having at most one-sided branching}\label{sec: one-sided branching}

Throughout this section,
we adopt the hypotheses of Section~\ref{sec: R-covered},
namely,
$\Sigma$ is a compact orientable surface,
$\varphi \colon \Sigma \to \Sigma$
is an orientation-preserving pseudo-Anosov homeomorphism,
$M$ is the mapping torus of $\varphi$
with boundary components $T_1,\ldots,T_r$,
$\psi$ is the suspension flow of $\varphi$,
and $J_i$ is the interval of slopes on $T_i$
given in Theorem~\ref{foliation}.
For each $1 \leqslant i \leqslant r$, fix a slope
$s_i \in J_i \cap (\Q \cup \{\infty\})$,
and let $\s = (s_1,\ldots,s_r)$ be the corresponding multislope.

In contrast to Section \ref{sec: R-covered}, 
where a specific admissible system of arcs is chosen,
we now consider an arbitrary admissible system of arcs
and analyze the resulting foliation.
Let $\alpha=(\alpha_1,\ldots,\alpha_n)$ be
an admissible system of arcs with respect to $(\Sigma,\varphi)$.
Let $B_\alpha$ be the branched surface associated to $\alpha$
obtained in Construction~\ref{new branched surface}.
By Proposition~\ref{prop: construction from the new branched surface},
$B_\alpha$ fully carries a lamination $\Ll_\alpha$ that extends to
a foliation $\F_\alpha$ in $M$, 
which is transverse to $\psi$ and intersects each $T_i$
in simple closed curves of slope $s_i$.
Moreover, $\F_\alpha$ extends to a co-orientable taut foliation
$\F_\alpha(\s)$ in $M(\s)$.
Fried's surgery also produces a pseudo-Anosov flow $\phi$ on $M(\s)$
from $\psi$ that is transverse to $\F_\alpha(\s)$.
By Remark~\ref{rmk: only Fs},
Proposition~\ref{prop: l and Fws transverse}
still holds for $\F_\alpha$ and $\psi$,
since it only requires each $\alpha_i$ to be positively transverse to $\Fs$.

We prove Theorem~\ref{main} (b) in this section.

\begin{main (b)}
The foliation $\F_\alpha(\s)$ has at most one-sided branching.
\end{main (b)}

\subsection{The intersection behavior of $\w \F$ and $\w \Ewu$}\label{subsec: F and Ewu}

Let $W = M(\s)$ and $\F = \F_\alpha(\s)$.
We write $\Ews = \Fws(\phi)$ and $\Ewu = \Fwu(\phi)$ for
the weak stable and weak unstable foliations of $\phi$, respectively.
Let $\w W$ be the universal cover of $W$.
We write $\w \F, \w \phi, \w \Ews, \w \Ewu$
for the lifts of $\F, \phi, \Ews, \Ewu$ to $\w W$, respectively.
Let $q: \w W \to L(\F)$ denote the canonical quotient map which
projects every leaf of $\w \F$ to the corresponding point of $L(\F)$.

We prove the following proposition in this subsection.

\begin{prop}\label{prop: one-sided branching intersection}
Let $l$ be a regular leaf or a half-leaf of $\w{\Ewu}$.
For any two points $x,y \in l$ contained in distinct flowlines of $\w \phi$,
there exist $n_1, n_2 \in \R$ and a homeomorphism
\[h: (-\infty,n_1] \to (-\infty,n_2]\]
such that
\[q(\w \phi^{t}(x)) = q(\w \phi^{h(t)}(y))\]
for all $t \in (-\infty,n_1]$.
In particular,
$\w{\F} \mid_{l}$ has no branching in the negative direction.
\end{prop}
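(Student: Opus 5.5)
Within a regular leaf or half-leaf $l$ of $\w{\Ewu}$, flowlines converge in backward time. I plan to combine this with the fact that $\w\F$ is transverse to $\w\phi$, so that every leaf of $\w\F$ near a point of $l$ cuts $l$ in a curve transverse to the flowlines. First, reduce to the universal cover of $M$. As in the proof of Proposition~\ref{regulating}, $\Int(l)$ lifts to a regular leaf or half-leaf $l'$ of $\w\Fwu$ in $\w M$. The claim concerns only the backward rays $\w\phi^{t}(x)$ for $t\to-\infty$, and backward rays in $l$ stay in a region where $\w\F$ restricts to the lift of $\F_\alpha$. So it suffices to work with $\w{\F_\alpha}\mid_{l'}$ and $\w\psi$. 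On $l'$ the suspension structure gives coordinates: $l'\cap(\w\Sigma\times\{t\})$ is a leaf segment of $\w\Fu\times\{t\}$. Flowing backward contracts distances along $\w\Fu$ by the dilatation, so the distance between $\w\psi^{-t}(x)$ and the orbit of $y$, measured inside the level $\w\Sigma\times\{-t\}$, tends to $0$ exponentially.

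Second, I use local product structure. Take the foliated chart in which $\F_\alpha$ is transverse to $\psi$, and choose a uniform $\epsilon>0$ by compactness of $M$ (near $\partial M$ I would use the explicit structure of $\F_\alpha$ on the boundary tori together with the flow). Then any two points of a leaf of $\Fwu$ that are within $\epsilon$ inside the leaf and lie on nearby orbits are joined by a short plaque of $\F_\alpha\cap\Fwu$. The plaque meets each nearby orbit exactly once, because $\F_\alpha$ is transverse to $\psi$. Choose $n_1$ so that, for all $t\leqslant n_1$, the point $\w\psi^{t}(x)$ is within $\epsilon$ of the orbit of $y$ inside $l'$. For each such $t$, let $\w\psi^{h(t)}(y)$ be the unique point of the orbit of $y$ lying on the same local plaque of $\w{\F_\alpha}\cap l'$ as $\w\psi^{t}(x)$. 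Both points then lie in the same leaf, so $q(\w\phi^{t}(x))=q(\w\phi^{h(t)}(y))$.

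Third, I check that $h$ is a homeomorphism onto some $(-\infty,n_2]$. Continuity follows from continuity of the plaques, and $h$ is monotone increasing because the co-orientations of the plaques agree with the flow direction. Properness, that is $h(t)\to-\infty$, follows because the transverse height along the flow is comparable on the two orbits, as the plaques have uniformly bounded slope. For the final statement, suppose two leaves $\lambda,\mu$ of $\w\F\mid_l$ were non-separated in the negative direction. Take $x\in\lambda$ and $y\in\mu$; after flowing backward they become identified in $L(\F)$ along rays whose images are embedded intervals. Any two such leaves therefore have a common lower bound, which is exactly the absence of negative branching.

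The main obstacle is the uniform plaque step near the collapsed orbits $\w C$ and near half-leaves. Along a half-leaf bounded by a singular or filled orbit, the local product structure of $(\F,\Ewu)$ degenerates, and the lift to $\w M$ sees the boundary tori, where $\F_\alpha$ meets $T_i$ in curves of slope $s_i$ rather than in a compact foliated chart. I would handle this by arguing directly in the closed manifold $W$: there $\F$ is transverse to $\phi$, and $\F\cap\Ewu$ is a singular one-dimensional foliation of the leaves of $\Ewu$ with a uniform transverse chart size by compactness of $W$. If $x$ and $y$ lie on a half-leaf, I would treat the boundary orbit as a limiting case.
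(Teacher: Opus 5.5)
Your argument is correct and is essentially the paper's proof. The paper works directly in the closed manifold $W$, which is your fallback: it takes flow-box charts for the transverse pair $(\F,\phi)$ and a Lebesgue number $c$, and uses property (ii) of the pseudo-Anosov flow to show that the leaf of $\w\F$ through $\w\phi^{t}(x)$ meets $\w\phi(y)$ for all sufficiently negative $t$, and symmetrically with $x$ and $y$ exchanged; $h$ and the common lower bound follow from this. Arguing this way makes your detour through $\w M$ and your concern about half-leaves and collapsed orbits unnecessary, because only charts of $(\F,\phi)$ are needed (not of $(\F,\Ewu)$), and the symmetric statement gives the surjectivity of $h$ onto $(-\infty,n_2]$ without any slope estimate.
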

\begin{proof}
We fix a Riemannian metric $d(\cdot,\cdot)$ on $W$, 
and use the same notation for the induced metric on $\w W$.

\begin{figure}
	\centering
	\includegraphics[width=0.8\textwidth]{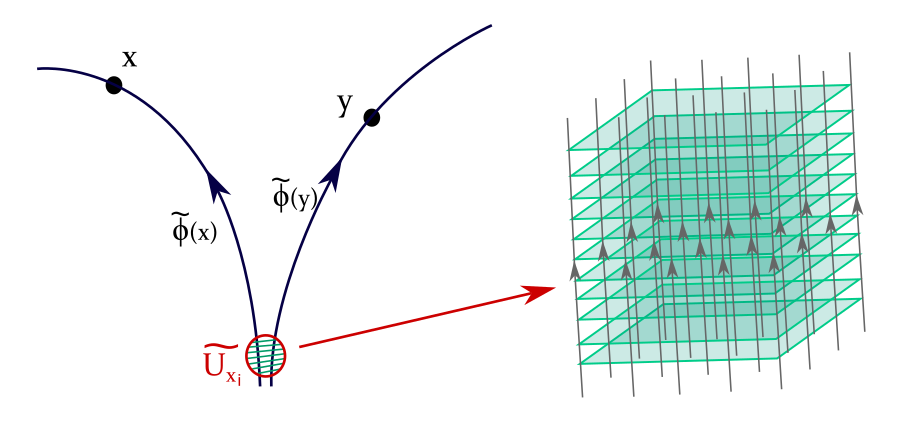}
	\caption{For $t \in \R$ sufficiently negative,
a lift $\w{U_{x_i}}$ of $U_{x_i}$ contains $\w\phi^{t}(x)$
and intersects $\w\phi(y)$;
hence the leaf of $\w\F$ intersecting $\w\phi^{t}(x)$
also meets $\w\phi(y)$.}\label{one-sided branching on Ewu}
\end{figure}

Since $\phi$ is transverse to $\F$, for each $x \in W$ there exists 
an open neighborhood $U_x \cong \R^3$ of $x$ and 
a homeomorphism $h: U_x \to \R^2 \times \R$ such that
\[h(\F \mid_{U_x}) = \{(\R^2,v) \mid v \in \R\},\]
\[h(\phi \mid_{U_x}) = \{(u,\R) \mid u \in \R^2\}.\]
Since $W$ is compact, 
there exists a finite set $P = \{x_1, \ldots, x_m\} \subseteq W$ such that
\[\bigcup_{i=1}^{m} U_{x_i} = W.\]
Since $\{U_{x_i}\}_{i=1}^m$ is a finite open cover of the compact metric space $W$,
by the Lebesgue number lemma,
there exists a constant $c > 0$ such that
for any $x \in W$, the $c$-ball $\{ y \in W \mid d(x,y) < c \}$ 
is contained in some $U_{x_i}$.

Let $x,y \in l$ be contained in two distinct flowlines of $\w \phi$.
There exists $n \in \R$ sufficiently negative such that
\[d(\w \phi^{t}(x), \w \phi(y)) < c, \quad \text{for all } t < n,\]
\[d(\w \phi(x), \w \phi^{t}(y)) < c, \quad \text{for all } t < n.\]
Let $t < n$, and let $\lambda$ be the leaf of $\w \F$ intersecting $\w \phi^{t}(x)$.
As $\w \phi(y)$ meets the $c$-ball \[\{s \in \w W \mid d(s,\w \phi^{t}(x)) < c\},\]
there exists a lift $\w{U_{x_i}}$ of some $U_{x_i}$ to $\w W$
intersected by both $\lambda$ and $\w \phi(y)$.
It follows that $\lambda \cap \w \phi(y) \ne \emptyset$.
This implies that every leaf of $\w \F$ intersecting 
$\w \phi^{(-\infty,n]}(x)$ also intersects $\w \phi(y)$.
Similarly, every leaf of $\w \F$ intersecting $\w \phi^{(-\infty,n]}(y)$
also intersects $\w \phi(x)$.

Let $n_1,n_2 \in \R$ be such that 
$n_1,n_2 < n$ and
$q(\w \phi^{n_1}(x)) = q(\w \phi^{n_2}(y))$.
Then
\[q(\w \phi^{(-\infty,n_1]}(x)) = q(\w \phi^{(-\infty,n_2]}(y)).\]
Since $\w \phi$ is transverse to $\w \F$,
there exists a homeomorphism $h: (-\infty,n_1] \to (-\infty,n_2]$ such that
\[q(\w \phi^{t}(x)) = q(\w \phi^{h(t)}(y))\]
for all $t \in (-\infty,n_1]$.

Let $\lambda_1, \lambda_2$ be two arbitrary distinct leaves of $\w \F$ intersecting $l$.
Choose $u \in \lambda_1 \cap l$ and $v \in \lambda_2 \cap l$.
By the above conclusion,
there exist $t_1,t_2 \in \R$ sufficiently negative such that
$q(\w \phi^{t_1}(u)) = q(\w \phi^{t_2}(v))$.
Let $\mu \in L(\F)$ denote the leaf $q(\w \phi^{t_1}(u))$.
Then $\mu < \lambda_1$ and $\mu < \lambda_2$.
It follows that $\w \F \mid_l$ has no branching in the negative direction.
\end{proof}

\subsection{Branching behavior of $\F$}\label{subsec: verify one-sided}

In this subsection,
we complete the proof of Theorem \ref{main} (b).

\begin{prop}
    The foliation $\F$ either has one-sided branching or is $\R$-covered.
\end{prop}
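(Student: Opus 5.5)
To prove that $\F$ either has one-sided branching or is $\R$-covered, we show that $L(\F)$ has no branching in the negative direction. That is, for any two leaves $\lambda_1,\lambda_2$ of $\w\F$ there is a leaf $\mu$ with $\mu \stackrel{_{L(\F)}}{<} \lambda_1,\lambda_2$. Once this holds, Definition~\ref{one-sided branching} gives the conclusion directly. If $L(\F)\cong\R$ the foliation is $\R$-covered. Otherwise every pair of points has a common lower bound, which is exactly branching in the positive direction. No claim about the other direction is needed.

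To produce the common lower bound, first pick points $x\in\lambda_1$ and $y\in\lambda_2$. Let $o:\w W\to\Or(\phi)$ be the orbit space projection. As in the proof of Proposition~\ref{regulating}, join $o(x)$ to $o(y)$ by a broken path $\sigma_1*\cdots*\sigma_k$. Subdivide it so that each $\sigma_j$ lies in $o(l_j)$, where $l_j$ is a regular leaf or half-leaf of $\w\Ews$ or $\w\Ewu$. Consecutive segments meet at orbits $\w\phi(z_j)$, with $z_0\in\w\phi(x)$ and $z_k\in\w\phi(y)$.

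We then induct along the path, maintaining the following invariant: there is a leaf $\mu_j$ of $\w\F$ meeting $\w\phi(z_j)$ with $\mu_j\stackrel{_{L(\F)}}{\leq}\lambda_1$. Start with $\mu_0=\lambda_1$. For each step there are two cases.
\begin{enumerate}[(i)]
\item If $l_j$ is an unstable leaf or half-leaf, Proposition~\ref{prop: one-sided branching intersection} says the backward rays of $\w\phi(z_{j-1})$ and $\w\phi(z_j)$ cross the same leaves. Flowing $\w\phi(z_{j-1})$ backward from $\mu_{j-1}$ reaches a leaf $\mu_j\stackrel{_{L(\F)}}{<}\mu_{j-1}$ that also meets $\w\phi(z_j)$. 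Moving along an orbit against the flow decreases in $L(\F)$, since $\w\phi$ is positively transverse to $\w\F$ for the compatible co-orientation.
\item If $l_j$ is a stable leaf or half-leaf, we use the stable analogue. Proposition~\ref{prop: l and Fws transverse} holds for every admissible $\alpha$ by Remark~\ref{rmk: only Fs}. Transported to $W$ exactly as in the proof of Proposition~\ref{regulating}, it shows that every leaf of $\w\F$ meeting $l_j$ meets every orbit in $l_j$. So we may take $\mu_j=\mu_{j-1}$.
\end{enumerate}
At the end, $\mu_k$ meets $\w\phi(y)$ and satisfies $\mu_k\stackrel{_{L(\F)}}{\leq}\lambda_1$. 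Flowing $\w\phi(y)$ further backward below both $\mu_k$ and $\lambda_2$ gives a leaf $\mu$ lying below both. Here we use that the leaves crossed by a single orbit form a totally ordered segment of $L(\F)$.

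The main obstacle is making case (ii) rigorous in $W$ rather than $M$. The half-leaves $l_j$ may have boundary on the core orbits $\w C$ of the filling solid tori, and the filled foliation near these cores must be compared with $\F_\alpha$ in $M$. I would handle this as in Proposition~\ref{regulating}: subdivide so that each $\Int(\sigma_j)$ avoids singularities and $o(\w C)$, lift $\Int(l_j)$ to $\w M$, and apply Proposition~\ref{prop: l and Fws transverse} there. The orbits in $\w C$ themselves can be avoided, since the subdivision points may be chosen off the discrete set $o(\w C)$. A secondary point is to check that the comparisons $\mu_j\stackrel{_{L(\F)}}{<}\mu_{j-1}$ chain correctly. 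They do, since each is realized by a positively oriented embedded path, namely a segment of an orbit.
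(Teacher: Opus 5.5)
Your proposal is correct and follows the paper's proof essentially step for step. The paper likewise joins the two orbits by a broken path in $\Or(\phi)$ whose segment interiors avoid singularities and $o(\w C)$, pushes a leaf downward along each unstable segment using Proposition~\ref{prop: one-sided branching intersection}, and keeps it fixed along each stable segment using Proposition~\ref{prop: l and Fws transverse} (valid by Remark~\ref{rmk: only Fs}). It then takes a common lower bound on the terminal orbit. The only difference is cosmetic: you start from $\lambda_1$ and end on $\w\phi(y)$, whereas the paper starts from $\lambda_2$ and ends on $\w\phi(x)$.
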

\begin{proof}
Let $\lambda_1, \lambda_2$ be two arbitrary distinct leaves of $\w \F$.
We show that there exists a leaf $\mu$ of $\w \F$ such that
$\mu \Lle \lambda_1$ and $\mu \Lle \lambda_2$.

Choose points $x,y \in \w W$ with
$x \in \lambda_1$ and $y \in \lambda_2$.
Consider the canonical projection
$o: \w W \to \Or(\phi)$.
As in the proof of Proposition~\ref{regulating},
there exists a broken path
\[\sigma = \sigma_1 * \sigma_2 * \ldots * \sigma_k : I \to \Or(\phi)\]
such that $\sigma(0) = o(y)$, $\sigma(1) = o(x)$,
and each segment $\sigma_i$ is contained in a leaf of
$\Ors(\phi)$ or $\Oru(\phi)$.

Let $C$ denote the union of closed orbits of $\phi$
obtained by collapsing the filling solid tori in Fried's surgery,
and let $\w C$ be its pullback to $\w W$.
Arguing exactly as in the proof of Proposition~\ref{regulating},
after subdividing the segments if necessary,
we may assume that for every $i$,
the interior $\Int(\sigma_i)$ is disjoint from
the singularities of $\Or(\phi)$ and from the set $o(\w C)$.
As before, there exists a regular leaf or half-leaf $l_i$ of
$\w \Ews$ or $\w \Ewu$ such that
\[\sigma_i \subseteq o(l_i), \quad
\Int(l_i) \subseteq \w W - \w C.\]

Fix an orientation on $\sigma$ from $o(y)$ to $o(x)$,
which induces an orientation on each $\sigma_i$.
Choose points $u_i, v_i \in \w W$ for each $i$ such that
$o(u_i), o(v_i) \in \partial \sigma_i$ and
the induced orientation on $\sigma_i$ runs from $o(u_i)$ to $o(v_i)$.

We claim that for any leaf $\lambda$ intersecting $\w \phi(u_i)$,
there exists a leaf $\lambda'$ with $\lambda' \Lle \lambda$
that intersects $\w \phi(v_i)$.
If $l_i$ is a regular leaf or half-leaf of $\w \Ewu$,
this is an immediate consequence of
Proposition~\ref{prop: one-sided branching intersection}.
Now suppose that $l_i$ is a regular leaf or half-leaf of $\w \Ews$.
As in the proof of Proposition~\ref{regulating},
$l_i$ can be canonically identified with a regular leaf or half-leaf of 
the pullback foliation of $\Fws(\psi)$ to $\w M$.
Since Proposition~\ref{prop: l and Fws transverse} holds for $\F_\alpha$ and $\psi$,
every orbit of $\w \phi$ contained in $l_i$
intersects every leaf of $\w \F$ meeting $l_i$.
In particular, $\w \phi(v_i)$ intersects every leaf of $\w \F$
that intersects $\w \phi(u_i)$.
This completes the proof of the claim.

Starting from $\lambda_2$,
choose a leaf $\mu_1 \Lle \lambda_2$ that intersects $\w \phi(v_1)$.
Since $\w \phi(u_2) = \w \phi(v_1)$,
we can then choose a leaf $\mu_2 \Lle \mu_1$ that intersects $\w \phi(v_2)$.
Proceeding inductively, we obtain leaves
\[\mu_k \Lle \cdots \Lle \mu_1 \Lle \lambda_2\]
such that $\mu_k$ intersects $\w \phi(v_k)=\w \phi(x)$.
Since $\mu_k$ and $\lambda_1$ both intersect $\w \phi(x)$,
we can choose a leaf $\mu$ intersecting $\w \phi(x)$ with
\[\mu \Lle \mu_k, \lambda_1.\]
In particular, $\mu \Lle \mu_k \Lle \lambda_2$ and $\mu \Lle \lambda_1$, as desired.

By Definition~\ref{one-sided branching},
$\F$ has one-sided branching (in the positive direction)
if $L(\F)$ is not homeomorphic to $\R$.
Otherwise, $\F$ is $\R$-covered.
This completes the proof.
\end{proof}

\bibliographystyle{spmpsci}
\bibliography{refs}

\end{document}